\newtheorem{theorem}{Theorem}[section]
\newtheorem{lemma}[theorem]{Lemma}
\newtheorem{corollary}[theorem]{Corollary}
\newcommand{\powerset}{\raisebox{.15\baselineskip}{\Large\ensuremath{\wp}}}
\theoremstyle{definition}
\newtheorem{definition}[theorem]{Definition}
\newtheorem{examples}[theorem]{Examples}
\newtheorem{example}[theorem]{Example}
\theoremstyle{remark}
\newtheorem{remark}[theorem]{Remark}
\numberwithin{equation}{section}
\begin{document}

\title{ON SEMI-TRANSITIONAL AND TRANSITIONAL RINGS}

\author{Sourav Koner}
\address{Department of Mathematics, The University of Burdwan, Burdwan Rajbati, West Bengal 713104}
\email{harakrishnaranusourav@gmail.com}
 
\author{Titas Saha}
\address{Department of Mathematics, The University of Burdwan, Burdwan Rajbati, West Bengal 713104}
\email{titassaha26@gmail.com}

\author{Biswajit Mitra}
\address{Department of Mathematics, The University of Burdwan, Burdwan Rajbati, West Bengal 713104}
\email{bmitra@math.buruniv.ac.in}

\subjclass[2020]{Primary 54C50, 13Axx, 54Hxx; Secondary 13A99, 54H10, 22A30}

\begin{abstract}
In this paper, we introduce and study two new classes of commutative rings, namely semi-transitional rings and transitional rings, which extend several classical ideas arising from rings of continuous functions and their variants. A general framework for these rings is developed through the notion of semi-transition and transition maps, leading to a systematic exploration of their algebraic and topological properties. Structural results concerning product rings, localizations, and pm-rings are established, showing that these new classes naturally generalize familiar examples such as $k[x]$, $C^*(X)$, and the ring of admissible ideal convergent real sequences. Ideals and filters induced by semi-transition maps are analyzed to characterize prime and maximal ideals, revealing a duality between algebraic and set-theoretic constructions. Furthermore, conditions under which semi-transitional rings become semiprimitive are determined, and a Stone–Čech-like compactification is constructed for transitional rings, giving rise to a new perspective on unique extension properties in topological algebra.
\end{abstract}

\maketitle

\textit{Keywords---}localization, local ring, B\'ezout domain, pm-ring, ideal convergent sequence, one-point compactification, Stone-Čech compactification, Banaschewski compactification.

\section{Introduction}

The study of rings of continuous functions and their structural generalizations has long served as a bridge between algebra and topology. Classical objects such as $C(X)$ and $C^{*}(X)$ have motivated the exploration of numerous ring-theoretic analogues through which algebraic operations capture topological properties of underlying spaces. In this work, we develop a broader theory by introducing two new classes of rings---\emph{semi-transitional} and \emph{transitional rings}---that subsume and generalize several known constructions while preserving key interactions between algebraic ideals and topological zero-sets.

The framework begins with the definition of a \emph{semi-transition map} on a ring, encoding how zero-sets behave under addition and multiplication. This notion captures familiar examples such as polynomial rings over a field and rings of bounded continuous functions. We then demonstrate that many structural features, including localization and product formation, are preserved under semi-transition maps. Further, by imposing additional separation conditions on the underlying structure, we define \emph{transitional rings}, which exhibit stronger correspondence with pm-rings and admit a rich topology on their maximal spectra.

The paper proceeds as follows. Section~\ref{sec:semi} introduces semi-transitional rings and illustrates fundamental examples and constructions. Section~\ref{sec:ideals} investigates ideals and the associated filter structures, establishing a bijective correspondence between maximal ideals and ultrafilters. Section~\ref{sec:structure} provides structural characterizations of semiprimitive associated semi-transitional rings. Section~\ref{sec:transitional} extends the framework to transitional rings and offers examples such as the ring of $I$-convergent sequences of reals which is discussed in section \ref{sec: ideal conv}. Section~\ref{sec:omega} develops an $\Omega$-compactification analogous to the Stone--\v{C}ech compactification, highlighting the unique extension property within this new context. Finally, Section~\ref{sec:application} presents an application of the $\Omega$-compactification, demonstrating how the Stone--\v{C}ech and Banaschewski compactifications can be derived from its unique extension property. 

\section{Semi-transitional rings}\label{sec:semi}

\begin{definition}\label{SITARAM}
A ring $A$ is said to be a semi-transitional ring if there exists a nonempty set $\Lambda$ and a map $\psi$ that associates for each $a \in A$ a subset $\psi(a)$ of $\Lambda$ such that\\
$(i)$ $\psi(ab) = \psi(a) \cup \psi(b)$, for all $a, b \in A$;\\
$(ii)$ $\psi(1) = \emptyset$; $\psi(a) = \Lambda$ if and only if $a = 0$;\\
$(iii)$ $\psi(a) \cap \psi(b) = \psi (c) \subseteq \psi(a + b)$, for all $a, b \in A$ and for some $c \in (a, b)$; moreover, if $\psi(a) \cap \psi(b) = \emptyset$, then $(a^{m}, b^{n})$ is principal, for some $m, n \in \mathbb{N}$.\\
Any map $\psi$ with these three properties is called a semi-transition map on $A$.
\end{definition}

\begin{examples}\label{SITARAM1}
The most important examples of semi-transitional rings, from our standpoint, are the polynomial ring $k[x]$ over a field $k$ and the ring $C^{\ast}(X)$ of real-valued bounded continuous functions on a topological space $X$; the semi-transition maps $\psi_{1}: k[x] \rightarrow \powerset(k)$ and $\psi_{2}: C^{\ast}(X) \rightarrow \powerset(X)$ are defined by 
\begin{align}
\psi_{1}(f) = \{\alpha \in k \mid f(\alpha) = 0\}, \psi_{2}(g) = \{x \in X \mid g(x) = 0\}.
\end{align}
It is not difficult to see that the maps $\psi_{1}$ and $\psi_{2}$ satisfy the conditions $(i)$ and $(ii)$ of Definition \eqref{SITARAM}. To see that they also satisfy the condition $(iii)$ of Definition \eqref{SITARAM}, observe that since $k[x]$ is a B\'ezout domain, therefore for any $f_1$, $f_2 \in K[x]$, $$(f_{1}, f_{2}) = (f_{3}).$$ 

So, we have the following two relations among $f_1, f_2$ and $f_3$. 

\begin{equation}\label{bh}
f_3 = g f_1 + h f_2
\end{equation}

\text{and}

\begin{equation}\label{mol}
\begin{cases}
f_1 = f_3 u \\
f_2 = f_3 v
\end{cases}
\end{equation}

for $g,h,u,v \in k[x]$. From \eqref{bh}, we get
$$\psi_1 (f_1) \cap \psi_1 (f_2) \subseteq  \psi_1 (f_3)$$
and 
from \eqref{mol}, we get
$$\psi_1 (f_3) \subseteq \psi_1 (f_1) \cap \psi_1 (f_2).$$ Patching both of them, we evidently have

\begin{align}
\psi_{1}(f_{1}) \cap \psi_{1}(f_{2}) = \psi_{1}(f_{3}) \subseteq \psi_{1}(f_{1} + f_{2}), \text{where } (f_{1}, f_{2}) = (f_{3}).
\end{align}

Remaining part of $(iii)$ Definition \ref{SITARAM}, follows trivially again due to B\'ezout nature of $k[x]$ with $n = m = 1$.

In case of $\psi_2$, the next expression \ref{bm} trivially follows from $(1.10)$ in \cite{gj60}

\begin{align}\label{bm}
\psi_{2}(g_{1}) \cap \psi_{2}(g_{2}) = \psi_{2}(g_{1}^{2} + g_{2}^{2}) \subseteq \psi_{2}(g_{1} + g_{2}).
\end{align}

Now, if $\psi_{2}(g_{1}) \cap \psi_{2}(g_{2}) = \emptyset$, then $g_{1}^{2} + g_{2}^{2} \neq 0$, for all $x \in X$. Since both the functions $\frac{g_{1}^{2}}{g_{1}^{2} + g_{2}^{2}}$ and $\frac{g_{2}^{2}}{g_{1}^{2} + g_{2}^{2}}$ are bounded and continuous, so $g_{1}^{2} = h_{1}(g_{1}^{2} + g_{2}^{2})$ and $g_{2}^{2} = h_{2}(g_{1}^{2} + g_{2}^{2})$, for some $h_{1}, h_{2} \in C^{\ast}(X)$. This gives that $(g_{1}^{2}, g_{2}^{2}) = (g_{1}^{2} + g_{2}^{2})$, here $n=m=2$.
\end{examples}

\begin{examples}\label{SITARAM2}
Another important examples of semi-transitional rings are the ring $C_{c}^{\ast}(X)$ of real-valued bounded continuous functions on a topological space $X$ with countable range and the ring $C= \{f\in C^*(\mathbb{N}) : \lim_{n\to\infty} f(n) \text{ exists}\}$ of real-valued convergent sequences; the semi-transition maps $\psi_{1}: C_{c}^{\ast}(X) \rightarrow \powerset(X)$ and $\psi_{2}: C \rightarrow \powerset(\mathbb{N}\cup \{p\})$, where $p \notin \mathbb{N}$, are defined by

$$\psi_{1}(f) = \{x \in X \mid f(x) = 0\}$$ 

and

$$
\Psi_2(f) = 
\begin{cases} 
Z(f) & \text{if } \lim_{n \to \infty} f(n) \ne 0 \\
Z(f) \cup \{p\} & \text{if } \lim_{n \to \infty} f(n) = 0
\end{cases}
$$

It can be verified easily that the maps $\psi_{1}$ and $\psi_{2}$ satisfy the conditions $(i)$ and $(ii)$ of Definition \eqref{SITARAM}. To prove that they also satisfy the condition $(iii)$ of Definition \eqref{SITARAM}, notice that
\begin{align}
\psi_{1}(f_{1}) \cap \psi_{1}(f_{2}) = \psi_{1}(f_{1}^{2} + f_{2}^{2}) \subseteq \psi_{1}(f_{1} + f_{2}).
\end{align}
Moreover, if $\psi_{1}(f_{1}) \cap \psi_{1}(f_{2}) = \emptyset$, then a similar argument as in Example \eqref{SITARAM1} gives that $(f_{1}^{2}, f_{2}^{2}) = (f_{1}^{2} + f_{2}^{2})$.

If $f, g \in C$ and $h = f^2 + g^2$ then, it is not difficult to prove case-wise that,
\begin{align}
\psi_{2}(f) \cap \psi_{2}(g) = \psi_{2}(h) \subseteq \psi_{2}(f + g).
\end{align}
Further, if $\psi_{2}(f) \cap \psi_{2}(g) = \emptyset$, then we may consider the following two cases: 

\textbf{Case 1:} $lim_{n\to \infty} f(n) \neq 0 $ and $lim_{n\to \infty} g(n) \neq 0$. 

\textbf{Case 2:} Either of $lim_{n\to \infty} f(n)$ or $;lim_{n\to \infty} g(n)$ equals to $0$.

As $\psi_{2}(f) \cap \psi_{2}(g) = \emptyset$, by definition, Case 1 does not hold.

For Case 2, $\psi_{2}(f) \cap \psi_{2}(g) = Z(f^2 + g^2) = \emptyset$. This implies that $\frac{f^2}{f^2+g^2}$ and $\frac{g^2}{f^2+g^ 2}$ are in $C$. The rest follows along the same line of argument as in Example \eqref{SITARAM1} to conclude that $(f^{2}, g^{2}) = (f^2 + g^2)$. Here also $n = m = 2$.
\end{examples}

The natural inquiry regarding the existence of infinitely many transitional rings is affirmatively addressed in the following theorem. 

\begin{theorem}\label{SITARAM3}
Let $F$ be a nonempty index set and let $\psi_{\alpha}: A_{\alpha} \rightarrow \powerset(\Lambda_{\alpha})$ be a semi-transition map for each $\alpha \in F$. Suppose there exists a fixed positive integer $m$ such that for every $\alpha \in F$, $\psi_{\alpha}(a_{\alpha}) \cap \psi_{\alpha}(b_{\alpha}) = \emptyset$ implies $(a_{\alpha}^{m}, b_{\alpha}^{m})$ is principal. Then the product ring $\prod_{\alpha \in F} A_{\alpha}$ is a semi-transitional ring.
\end{theorem}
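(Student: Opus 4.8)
The plan is to equip the product ring $A := \prod_{\alpha \in F} A_{\alpha}$ with the obvious ``glued'' structure: take $\Lambda := \coprod_{\alpha \in F} \Lambda_{\alpha}$ to be the disjoint union (which is nonempty since $F$ and each $\Lambda_{\alpha}$ are), and for $a = (a_{\alpha})_{\alpha \in F} \in A$ define $\psi(a) := \coprod_{\alpha \in F} \psi_{\alpha}(a_{\alpha}) \subseteq \Lambda$. I would then verify the three axioms of Definition \ref{SITARAM} one component at a time, since the ring operations in a product are computed componentwise and intersections, unions and inclusions of disjoint unions are likewise componentwise.

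Conditions $(i)$ and $(ii)$ are immediate from this observation: $\psi(ab) = \coprod_{\alpha}\psi_{\alpha}(a_{\alpha}b_{\alpha}) = \coprod_{\alpha}(\psi_{\alpha}(a_{\alpha}) \cup \psi_{\alpha}(b_{\alpha})) = \psi(a) \cup \psi(b)$; since $\psi_{\alpha}(1) = \emptyset$ for every $\alpha$ we get $\psi(1) = \emptyset$; and $\psi(a) = \Lambda$ forces $\psi_{\alpha}(a_{\alpha}) = \Lambda_{\alpha}$ for every $\alpha$, i.e. $a_{\alpha} = 0$ for every $\alpha$, i.e. $a = 0$, and conversely.

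For the first half of $(iii)$, fix $a = (a_{\alpha})$, $b = (b_{\alpha}) \in A$. For each $\alpha$ choose, using the semi-transition property of $\psi_{\alpha}$, an element $c_{\alpha} = g_{\alpha}a_{\alpha} + h_{\alpha}b_{\alpha} \in (a_{\alpha}, b_{\alpha})$ with $\psi_{\alpha}(a_{\alpha}) \cap \psi_{\alpha}(b_{\alpha}) = \psi_{\alpha}(c_{\alpha}) \subseteq \psi_{\alpha}(a_{\alpha} + b_{\alpha})$, and set $c := (g_{\alpha})_{\alpha}\, a + (h_{\alpha})_{\alpha}\, b = (c_{\alpha})_{\alpha} \in (a, b)$. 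Then $\psi(a) \cap \psi(b) = \coprod_{\alpha}\bigl(\psi_{\alpha}(a_{\alpha}) \cap \psi_{\alpha}(b_{\alpha})\bigr) = \coprod_{\alpha}\psi_{\alpha}(c_{\alpha}) = \psi(c) \subseteq \coprod_{\alpha}\psi_{\alpha}(a_{\alpha}+b_{\alpha}) = \psi(a+b)$, as required.

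The one place where the hypotheses really get used is the ``moreover'' clause, and this is the step I expect to need the most care. Suppose $\psi(a) \cap \psi(b) = \emptyset$; then $\psi_{\alpha}(a_{\alpha}) \cap \psi_{\alpha}(b_{\alpha}) = \emptyset$ for every $\alpha$, so by hypothesis $(a_{\alpha}^{m}, b_{\alpha}^{m})$ is principal in $A_{\alpha}$, say $(a_{\alpha}^{m}, b_{\alpha}^{m}) = (d_{\alpha})$, with witnessing identities $d_{\alpha} = s_{\alpha}a_{\alpha}^{m} + t_{\alpha}b_{\alpha}^{m}$, $a_{\alpha}^{m} = u_{\alpha}d_{\alpha}$, and $b_{\alpha}^{m} = v_{\alpha}d_{\alpha}$. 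Collecting these into tuples $d := (d_{\alpha})$, $s := (s_{\alpha})$, $t := (t_{\alpha})$, $u := (u_{\alpha})$, $v := (v_{\alpha})$, and noting that powers in $A$ are componentwise (so $a^{m} = (a_{\alpha}^{m})$ and $b^{m} = (b_{\alpha}^{m})$), the same identities hold in $A$: $d = sa^{m} + tb^{m} \in (a^{m}, b^{m})$, while $a^{m} = ud \in (d)$ and $b^{m} = vd \in (d)$. Hence $(a^{m}, b^{m}) = (d)$ is principal, with the single uniform exponent $m$ playing the role of both $m$ and $n$ in Definition \ref{SITARAM}$(iii)$. It is precisely here that a \emph{fixed} $m$ is indispensable: if the exponent were allowed to vary with $\alpha$, there would be no single element of $A$ on which to take the $m$-th power, and the componentwise gluing would break down. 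This completes the verification that $\psi$ is a semi-transition map on $\prod_{\alpha \in F}A_{\alpha}$, so that ring is semi-transitional.
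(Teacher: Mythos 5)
Your proposal is correct and follows essentially the same route as the paper: the same disjoint-union $\Lambda$, the same componentwise $\psi$, and the same componentwise assembly of the witnesses $c$ and of the generator of $(a^{m},b^{m})$. Your write-up is in fact slightly more explicit than the paper's in recording the identities $d = sa^{m}+tb^{m}$, $a^{m}=ud$, $b^{m}=vd$ that justify $(a^{m},b^{m})=(d)$, and in isolating why a uniform exponent $m$ is needed.
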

\begin{proof}
Let  $\Lambda = \bigsqcup_{\alpha \in F} \Lambda_{\alpha}$, that is, $\Lambda$ is the disjoint union of the sets $\Lambda_{\alpha}$'s. Let $A = \prod_{\alpha \in F} A_{\alpha}$. Define a map $\psi: A \rightarrow \powerset(\Lambda)$ by $\psi((a_{\alpha})_{\alpha \in F}) = \bigsqcup_{\alpha \in F} \psi_{\alpha}(a_{\alpha})$. We now show that $\psi$ is a semi-transition map. It is not hard to see that $\psi$ satisfy the conditions $(i)$ and $(ii)$ of Definition \eqref{SITARAM}. To prove that $\psi$ also satisfies the condition $(iii)$ of Definition \eqref{SITARAM}, consider any two elements $a = (a_{\alpha})_{\alpha \in F}$ and $b = (b_{\alpha})_{\alpha \in F}$ of $A$. Since $\psi(a) \cap \psi(b) = \bigsqcup_{\alpha \in F}(\psi_{\alpha}(a_{\alpha}) \cap \psi_{\alpha}(b_{\alpha})) = \bigsqcup_{\alpha \in F}\psi_{\alpha}(c_{\alpha})$, where $c_{\alpha} = u_{\alpha}a_{\alpha} + v_{\alpha}b_{\alpha}$, for all $\alpha \in F$ and for some $u_{\alpha}, v_{\alpha} \in A_{\alpha}$, therefore if we set $c = (c_{\alpha})_{\alpha \in F}$, $u = (u_{\alpha})_{\alpha \in F}$, and $v = (v_{\alpha})_{\alpha \in F}$, then we see that $c = au + bv$ and $\psi(a) \cap \psi(b) = \psi(c)$. Moreover, as $\psi_{\alpha}(a_{\alpha}) \cap \psi_{\alpha}(b_{\alpha}) \subseteq \psi_{\alpha}(a_{\alpha} + b_{\alpha})$, so we have that $\psi(c) \subseteq \psi(a + b)$. Now, suppose that $\psi(a) \cap \psi(b) = \emptyset$. This implies that $\psi_{\alpha}(a_{\alpha}) \cap \psi_{\alpha}(b_{\alpha}) = \emptyset$, for all $\alpha \in F$. Hence, $(a_{\alpha}^{m}, b_{\alpha}^{m}) = (z_{\alpha})$, for some $z_{\alpha} \in A_{\alpha}$ and for all $\alpha \in F$. Now, if we set $z = (z_{\alpha})_{\alpha \in F}$, then we obtain that $(a^{m}, b^{m}) = (z)$.
\end{proof}

Recall that a ring $A$ is said to be a pm-ring if each prime ideal is contained in a unique maximal ideal. It is well-known  that for a ring $A$ with $\mathrm{J}(A) = 0$, $\mathrm{Max}(A)$ is Hausdorff if and only if it is pm; such rings are called soft rings \cite{c82}. The next result shows that every semi-transitional ring can be regarded as a subring of some bigger semi-transitional ring. 

\begin{theorem}\label{SITARAM4}
Every semi-transitional ring $A$ can be embedded into its ring of fractions $S^{-1}A := A_{S}$, for some multiplicatively closed subset $S$ of $A$. Moreover, $A_{S}$ is also a semi-transitional ring.
\end{theorem}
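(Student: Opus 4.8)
The plan is to localize $A$ at the set of elements with empty zero-set. Put $S = \{a \in A : \psi(a) = \emptyset\}$. First I would check this is a sensible choice: by $(ii)$ we have $1 \in S$, and if $\psi(a) = \psi(b) = \emptyset$ then $\psi(ab) = \psi(a) \cup \psi(b) = \emptyset$ by $(i)$, so $S$ is multiplicatively closed; moreover $S$ consists of non-zero-divisors, since $a \in S$ and $ab = 0$ force $\Lambda = \psi(0) = \psi(ab) = \psi(a) \cup \psi(b) = \psi(b)$, whence $b = 0$ by $(ii)$. Consequently the canonical map $\iota : A \to S^{-1}A =: A_S$, $a \mapsto a/1$, is injective, which is the required embedding. (The trivial localization $S = \{1\}$ would work formally as well, but this $S$ is in fact the largest multiplicatively closed set for which the formula below is well defined.)

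To make $A_S$ semi-transitional I would keep the same index set $\Lambda$ and define $\psi' : A_S \to \powerset(\Lambda)$ by $\psi'(a/s) = \psi(a)$. The first point to settle is well-definedness: if $a/s = b/t$, choose $r \in S$ with $r(at - bs) = 0$; applying $(i)$ to the identity $rat = rbs$ and using $\psi(r) = \psi(s) = \psi(t) = \emptyset$ gives $\psi(a) = \psi(b)$. Then conditions $(i)$ and $(ii)$ of Definition \ref{SITARAM} for $\psi'$ are immediate: $\psi'\!\big((a/s)(b/t)\big) = \psi(ab) = \psi'(a/s) \cup \psi'(b/t)$; $\psi'(1) = \psi(1) = \emptyset$; and $\psi'(a/s) = \Lambda$ iff $\psi(a) = \Lambda$ iff $a = 0$ iff $a/s = 0$ in $A_S$ (again using that $S$ has no zero-divisors).

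The substance is condition $(iii)$. The key observations are that $s/1$ and $t/1$ are units in $A_S$, so $(a/s, b/t) = (a/1, b/1) = (a,b)A_S$, and that $a/s + b/t = (at + bs)/(st)$, so $\psi'(a/s + b/t) = \psi(at + bs)$. Hence, instead of applying $(iii)$ of $\psi$ to the pair $(a,b)$, I would apply it to the pair $(at, bs)$: since $\psi(s) = \psi(t) = \emptyset$ we have $\psi(at) \cap \psi(bs) = \psi(a) \cap \psi(b)$, so $(iii)$ yields $c_0 \in (at, bs) \subseteq (a,b)$ with $\psi(a) \cap \psi(b) = \psi(c_0) \subseteq \psi(at + bs)$. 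Taking $c = c_0/1 \in (a,b)A_S = (a/s, b/t)$, we get $\psi'(a/s) \cap \psi'(b/t) = \psi(c_0) = \psi'(c) \subseteq \psi(at + bs) = \psi'(a/s + b/t)$, as required. For the last clause, if $\psi'(a/s) \cap \psi'(b/t) = \psi(a) \cap \psi(b) = \emptyset$, then $(a^m, b^n) = (z)$ in $A$ for some $m, n \in \mathbb{N}$ and $z \in A$; extending to $A_S$ and using that $s, t$ are units gives $\big((a/s)^m, (b/t)^n\big) = (z/1)$, which is principal.

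The only delicate step is this verification of $(iii)$: the containment must land inside $\psi'(a/s + b/t) = \psi(at + bs)$, whereas applying $(iii)$ directly to $(a,b)$ only yields a witness inside $\psi(a+b)$; replacing $(a,b)$ by $(at, bs)$ repairs this without changing the relevant intersection, precisely because $s, t$ have empty $\psi$-value, and the identity $(a/s, b/t) = (a,b)A_S$ guarantees the witness lies in the correct ideal of $A_S$. The rest is routine bookkeeping with the construction of $A_S$.
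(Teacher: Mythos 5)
Your proof is correct and follows essentially the same route as the paper: localize at $S=\{a\in A:\psi(a)=\emptyset\}$, observe $S$ contains no zero-divisors so $a\mapsto a/1$ embeds, and push $\psi$ forward via $\psi'(a/s)=\psi(a)$. The only (cosmetic) difference is in verifying condition $(iii)$: you apply it once to the pair $(at,bs)$ to get the witness and the containment in $\psi(at+bs)$ simultaneously, whereas the paper applies it to $(a,b)$ and then separately notes $\psi(a)\cap\psi(b)\subseteq\psi(sa+tb)$; both are valid.
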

\begin{proof}
Let $\psi: A \rightarrow \powerset(\Lambda)$ be the semi-transition map. Let us now consider the set $S = \{a \in A \mid \psi(a) = \emptyset\}$. It is not difficult to show that $S$ is a multiplicatively closed subset of $A$. Let $A_{S}$ be the localization with respect to $S$. Now, consider the canonical homomorphism $i: A \rightarrow A_{S}$, defined by $i(a) = \frac{a}{1}$. If $\frac{a}{1} = 0$, then $at = 0$, for some $t \in S$, and this implies that $\psi(a) = \Lambda$, that is, $a = 0$. This shows that $i$ is an embedding. \\
$(a)$ Define $\varphi: A_{S} \rightarrow \powerset(\Lambda)$ by $\varphi(\frac{a}{t}) = \psi(a)$, for all $\frac{a}{t} \in A_{S}$. If $\frac{a}{t} = \frac{b}{s}$, for some $a, b \in A$ and for some $s, t \in S$, then $asu = btu$, for some $u \in S$. This implies that $\psi(a) = \psi(b)$. This shows that $\varphi$ is a well-defined map. We now show that $\varphi$ is a semi-transition map on $A_{S}$. Let $b_{1} = \frac{a}{t}$ and $b_{2} = \frac{b}{s}$. \\
$(i)$ Since $\psi(ab) = \psi(a) \cup \psi(b)$, for all $a, b \in A$, so $\varphi(b_{1}b_{2}) = \varphi(b_{1}) \cup \varphi(b_{2})$, for all $b_{1}, b_{2} \in A_{S}$. \\
$(ii)$ Since $\psi(1) = \emptyset$, so $\varphi(\frac{1}{1}) = \emptyset$; now, $\varphi(b_{1}) = \Lambda$, if and only if, $\psi(a) = \Lambda$, if and only if, $a = 0$, if and only if, $b_{1} = 0$. \\
$(iii)$ Since $\psi(a) \cap \psi(b) = \psi(ax + by)$, for some $x, y \in A$, therefore if we set $b_{3} = (\frac{a}{t})\frac{x}{s} + (\frac{b}{s})\frac{y}{t}$, then $\varphi(b_{1}) \cap \varphi(b_{2}) = \varphi(b_{3})$, where $b_{3} \in (b_{1}, b_{2})$. Further, since $\psi(sa + tb) \supseteq \psi(a) \cap \psi(b)$, so $\varphi(b_{1} + b_{2}) \supseteq \varphi(b_{1}) \cap \phi(b_{2})$. 

Assume now that $\varphi(b_{1}) \cap \varphi(b_{2}) = \emptyset$. Then, $\psi(a) \cap \psi(b) = \emptyset$. This implies that $(a^{m}, b^{n}) = (z)$, for some $z \in A$. As $A \cong i(A)$, we get that $(\frac{a^{m}}{1}, \frac{b^{n}}{1}) = (\frac{z}{1})$. Finally, since $(\frac{a^{m}}{t^{m}}, \frac{b^{n}}{s^{n}}) = (\frac{a^{m}}{1}, \frac{b^{n}}{1})$, the result follows. 
\end{proof}

\begin{theorem}\label{SITARAM4.5}
If a semi-transitional ring $A$ is a pm-ring, then\\
$(a)$ $A_{S}$ is also a pm-ring.\\
$(b)$ $\mathrm{Max}(A)$ is homeomorphic to $\mathrm{Max}(A_{S})$.
\end{theorem}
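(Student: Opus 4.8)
The plan is to transport everything through the standard order-isomorphism between $\mathrm{Spec}(A_{S})$ and the set $U:=\{\mathfrak p\in\mathrm{Spec}(A):\mathfrak p\cap S=\emptyset\}$, where $S=\{a\in A:\psi(a)=\emptyset\}$ is the multiplicative set used in Theorem \ref{SITARAM4}; this isomorphism ($\mathfrak q\mapsto i^{-1}(\mathfrak q)$ and $\mathfrak p\mapsto S^{-1}\mathfrak p$) is moreover a homeomorphism of $\mathrm{Spec}(A_{S})$ onto the subspace $U\subseteq\mathrm{Spec}(A)$. Two features of this particular $S$ will be used. First, $S$ is saturated: $\psi(ab)=\psi(a)\cup\psi(b)$ forces $\psi(a)=\psi(b)=\emptyset$ whenever $\psi(ab)=\emptyset$, so $A\setminus S=\bigcup_{\mathfrak p\in U}\mathfrak p$. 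Second, every $s\in S$ is a non-zero-divisor and is not nilpotent --- if $sb=0$ then $\Lambda=\psi(0)=\psi(s)\cup\psi(b)=\psi(b)$, so $b=0$, and $\psi(s^{k})=\psi(s)=\emptyset\neq\Lambda$ gives $s^{k}\neq0$ --- and since a regular element lies in no minimal prime, every minimal prime of $A$ belongs to $U$.

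I would then reduce both parts to the single claim $(\star)$: \emph{for each $\mathfrak m\in\mathrm{Max}(A)$ the family $\{\mathfrak p\in U:\mathfrak p\subseteq\mathfrak m\}$ has a largest element $\mathfrak m^{\flat}$.} The family is nonempty (it contains a minimal prime of $A$ below $\mathfrak m$) and closed under unions of chains, so it has maximal elements by Zorn; $(\star)$ says there is just one. Granting $(\star)$, part (a) follows: a prime of $A_{S}$ is $S^{-1}\mathfrak p$ for a unique $\mathfrak p\in U$; since $A$ is a pm-ring, $\mathfrak p$ lies under a unique $\mathfrak m\in\mathrm{Max}(A)$, so every prime of $U$ above $\mathfrak p$ lies under $\mathfrak m$, whence any maximal ideal of $A_{S}$ above $S^{-1}\mathfrak p$ is $S^{-1}\mathfrak q$ with $\mathfrak q$ a maximal element of $U$ above $\mathfrak p$, and by $(\star)$ necessarily $\mathfrak q=\mathfrak m^{\flat}$ --- uniqueness, so $A_{S}$ is a pm-ring. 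For part (b), $\mathfrak m\mapsto S^{-1}\mathfrak m^{\flat}$ is a well-defined bijection $\mathrm{Max}(A)\to\mathrm{Max}(A_{S})$ (well-definedness and surjectivity use that, by the pm-property, $\mathfrak m^{\flat}$ is in fact maximal among \emph{all} members of $U$, so that $S^{-1}\mathfrak m^{\flat}$ is maximal in $A_{S}$), with inverse $\mathfrak n\mapsto$ the unique maximal ideal of $A$ lying above $i^{-1}(\mathfrak n)\in U$.

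That this bijection is a homeomorphism I would treat as the routine part. Its inverse, $\mathfrak n\mapsto r(i^{-1}(\mathfrak n))$ where $r\colon\mathrm{Spec}(A)\to\mathrm{Max}(A)$ is the retraction onto the maximal spectrum (continuous and closed because $A$ is a pm-ring), is the restriction to $\mathrm{Max}(A_{S})\subseteq U$ of the composite $\mathrm{Spec}(A_{S})\xrightarrow{\sim}U\hookrightarrow\mathrm{Spec}(A)\xrightarrow{r}\mathrm{Max}(A)$, hence continuous; and continuity of $\mathfrak m\mapsto S^{-1}\mathfrak m^{\flat}$ reduces, after clearing the now-invertible denominators, to the closedness in $\mathrm{Max}(A)$ of each set $\{\mathfrak m:a\in\mathfrak m^{\flat}\}$, which one extracts from $(\star)$ and the closedness of $r$.

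The main obstacle is $(\star)$ itself --- equivalently, that the pm-property descends to $A_{S}$. This genuinely needs the semi-transition map: a localization of a pm-ring need not be a pm-ring (inverting $x+y$ in $k[x,y]_{(x,y)}$ already fails), while a semi-transitional ring such as $k[x]$ need not be a pm-ring, so neither hypothesis is dispensable. The ingredients I would use are the prime ideals $\mathfrak m_{\lambda}:=\{a\in A:\lambda\in\psi(a)\}$ for $\lambda\in\Lambda$ --- prime by $\psi(ab)=\psi(a)\cup\psi(b)$, closed under addition by clause (iii), each lying in $U$ --- together with the iterated first half of clause (iii), which gives that the sets $\psi(a)$, $a\in\mathfrak p$, have the finite-intersection property for every $\mathfrak p\in U$, and the principal-ideal clause of (iii); the point to be established is that the filters so attached to the primes of $U$ below a fixed $\mathfrak m$ are all refined by one common ultrafilter, which then produces $\mathfrak m^{\flat}$. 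This is precisely what the correspondence between maximal ideals and ultrafilters of Section \ref{sec:ideals} is designed to yield; once that correspondence is available for semi-transitional rings, the homeomorphism in (b) also drops out directly from the identity $\varphi(\frac{a}{t})=\psi(a)$, which makes $A$ and $A_{S}$ carry one and the same filter structure on $\Lambda$.
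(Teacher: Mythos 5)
Your reduction of both parts to the claim $(\star)$ --- that for each $\mathfrak{m}\in\mathrm{Max}(A)$ the primes of $A$ disjoint from $S$ and contained in $\mathfrak{m}$ have a largest element $\mathfrak{m}^{\flat}$ --- is sound as a reduction, and your preliminary observations about $S$ (saturated, consisting of regular elements, hence meeting no minimal prime) are correct. But the proposal does not prove $(\star)$: you correctly identify it as ``the main obstacle,'' list the ingredients you would use, and then write that ``the point to be established is that the filters so attached to the primes of $U$ below a fixed $\mathfrak{m}$ are all refined by one common ultrafilter.'' That unestablished point is essentially the whole content of part (a). Moreover, the Section~\ref{sec:ideals} machinery you invoke relates maximal ideals of $A_{S}$ to $\varphi$-ultrafilters on $\Lambda$; it does not by itself organize the primes of $A$ lying under a fixed maximal ideal of $A$, so it is not evident that the common refinement you need can be produced by those means. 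You also list the principal-ideal clause of Definition~\ref{SITARAM}(iii) among your ingredients but never deploy it, and the homeomorphism in (b) is declared ``routine'' without the closedness arguments being carried out. As it stands, the submission is a correct reduction plus a plan, not a proof.

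For comparison, the paper's proof avoids spectra entirely and verifies Contessa's element-wise criterion for pm-rings (Theorem 4.1 of \cite{c82}) directly in $A_{S}$: from $\frac{a}{s}+\frac{b}{t}=1$ one gets $\psi(a)\cap\psi(b)=\emptyset$, hence by the principal-ideal clause an element $a^{m}x+b^{n}y$ generating $(a^{m},b^{n})$, and this generator lies in $S$ because $\psi(a^{m}x+b^{n}y)\subseteq\psi(a)\cap\psi(b)=\emptyset$; dividing by it yields a genuine unimodular relation $\alpha x+\beta y=1$ in $A$ itself, to which Contessa's criterion in the pm-ring $A$ applies, producing $c,d$ with $cd=0$ and $(\alpha x,c)=(\beta y,d)=A$, which push forward to witness the criterion in $A_{S}$. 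Part (b) then follows at once from De Marco--Orsatti's Theorem 1.6 using the same elements $\alpha x\in M_{1}\cap A$ and $\beta y\in M_{2}\cap A$. Note that the step you are missing is exactly where the second half of clause (iii) enters; if you wish to salvage your route, you must either supply the ultrafilter argument in full (and it will have to use that clause somewhere) or switch to a direct verification of the pm criterion as the paper does.
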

\begin{proof}
$(a)$. Suppose that $\frac{a}{s} + \frac{b}{t} = 1$, for some $\frac{a}{s}, \frac{b}{t} \in A_{S}$. This gives that $(at)u + (bs)u = stu$, for some $u \in S$. Now, applying the conditions $(i)$ and $(iii)$ of Definition \eqref{SITARAM}, we see that $\psi(a) \cap \psi(b) \subseteq \psi(atu + bsu) = \emptyset$. This implies that $(a^m, b^{n}) = (a^mx + b^{n}y)$, for some $x, y \in A$. Thus, there exists $\alpha, \beta \in A$ such that $a^m = \alpha(a^mx + b^{n}y)$ and $b^{n} = \beta(a^{m}x + b^{n}y)$. Further, notice that $\psi(a^{m}x + b^{n}y) \subseteq \psi(a) \cap \psi(b)$. Hence, we obtain that $(a^{m}x + b^{n}y) \in S$. As $\frac{\alpha x}{1} = \frac{a^{m}x}{a^{m}x + b^{n}y}$ and $\frac{\beta y}{1} = \frac{b^{n}y}{a^{m}x + b^{n}y}$, therefore we get that $\alpha x + \beta y = 1$. Since $A$ is a pm-ring, so applying theorem $(4.1)$ in \cite{c82}, we argue that there exists $c, d \in A$ such that $cd = 0$ and $(\alpha x, c) = (\beta y, d) = A$. Since $(\frac{\alpha x}{1}, \frac{c}{1}) \subseteq (\frac{a}{s}, \frac{c}{1})$ and $(\frac{\beta y}{1}, \frac{d}{1}) \subseteq (\frac{b}{t}, \frac{d}{1})$, we obtain that $(\frac{a}{s}, \frac{c}{1}) = (\frac{b}{t}, \frac{d}{1}) = A_{S}$ and $\frac{cd}{1} = 0$, and so applying theorem $(4.1)$ in \cite{c82} again, we conclude that $A_{S}$ is a pm-ring. \\
$(b)$. By theorem $(1.6)$ in \cite{mo71}, it is enough to show that $(M_{1} \cap A) + (M_{2} \cap A) = A$, for distinct maximal ideals $M_{1}$, $M_{2}$ in $A_{S}$. So, suppose that $\frac{a}{s} \in M_{1}$ and $\frac{b}{t} \in M_{2}$ be such that $\frac{a}{s} + \frac{b}{t} = 1$. Then from the first part of this theorem, we have that $\frac{\alpha x}{1} = \frac{a^{m}x}{a^{m}x + b^{n}y}$ and $\frac{\beta y}{1} = \frac{b^{n}y}{a^{m}x + b^{n}y}$, for some $\alpha, \beta, x, y \in A$. Now, since $\alpha x \in M_{1} \cap A$, $\beta y \in M_{2} \cap A$, and $\alpha x + \beta y = 1$, the result follows.
\end{proof}

As an application of Theorem \eqref{SITARAM4.5}, we provide the following examples.

\begin{examples}\label{SITARAM5}
Consider the ring $C^{\ast}(X)$ introduced in Examples \eqref{SITARAM1}. Let $C(X)$ be the ring of real-valued continuous functions on $X$. Though it is well-known that $C(X)$ is a pm-ring and that $\mathrm{Max}(C^{\ast}(X))$ is homeomorphic to $\mathrm{Max}(C(X))$ (see \cite{gj60}), we may conclude this result alternatively in the following way: since $A = C^{\ast}(X)$ is a pm-ring and that $A_{S} = C(X)$, where $S = \{g \in A \mid \psi_{2}(g) = \emptyset\}$, therefore we may apply Theorem \eqref{SITARAM4} to conclude that $A_{S}$ is also a pm-ring and that $\mathrm{Max}(A)$ is homeomorphic to $\mathrm{Max}(A_{S})$.

Now, consider the ring $C$ introduced in Example \eqref{SITARAM2}. Let us consider the subspace $K = \{\frac{1}{m} \mid m \in \mathbb{N}\} \cup \{0\}$ of $\mathbb{R}$ equipped with the usual topology. For each $a = \{x_{n}\}_{n = 1}^{\infty}$ in $C$, we define a function $r_{a}: K \rightarrow \mathbb{R}$ by $r_{a}(\frac{1}{m}) = x_{m}$ and $r_{a}(0) = \lim_{n \to \infty} x_{n}$. It is evident that $r_{a} \in C(K)$, the ring of real-valued continuous functions on $K$. Define $\Psi: C \rightarrow C(K)$ by $\Psi(a) = r_{a}$, for all $a \in C$. It is evident that $\Psi$ is an injective ring homomorphism. Further, for $g \in C(K)$, if we define $x_{m} = g(\frac{1}{m})$, for all $m \in \mathbb{N}$, and $x_{0} = g(0)$, then we have that $\{x_{n}\}_{n = 1}^{\infty} \in C$. This shows that $\Psi$ is surjective. Hence, $C \cong C(K)$. As $K$ is compact, every maximal ideal of $C(K)$ is fixed, that is, $\mathrm{Max}(C) = \{M_{n} \mid n \in \mathbb{N}\} \cup \{M_{0}\}$. 

We now show that $C$ is a pm-ring. On the contrary, assume that $C$ is not a pm-ring. Let $P \in \mathrm{Spec}(C)$. There are two possibilities: for some $l, k \in \mathbb{N}$, either $P \subseteq M_{l} \cap M_{k}$ or $P \subseteq M_{l} \cap M_{0}$. Consider the former case and take any sequence $\{x_{n}\}_{n = 1}^{\infty}$ in $P$. Let us consider the sequences $\{y_{n}\}_{n = 1}^{\infty}$ and $\{z_{n}\}_{n = 1}^{\infty}$ in $C$ such that: $y_{m} = x_{m}$, for all $m \in \mathbb{N} - \{l\}$ and $y_{l} = 1$; $z_{m} = x_{m}$, for all $m \in \mathbb{N} - \{k\}$ and $z_{k} = 1$. Then $\{x_{n}^{2}\}_{n = 1}^{\infty} = \{y_{n}z_{n}\}_{n = 1}^{\infty} \in P$, whereas none of $\{y_{n}\}_{n = 1}^{\infty}$ and $\{z_{n}\}_{n = 1}^{\infty}$ are in $P$, a contradiction, since $P$ is prime. Now, consider the latter case. Take any sequence $\{x_{n}\}_{n = 1}^{\infty}$ in $P$ and define sequences $\{y_{n}\}_{n = 1}^{\infty}$ and $\{z_{n}\}_{n = 1}^{\infty}$ in $C$ as follows: $y_{m} = x_{m}$, for all $m \in \mathbb{N} - \{l\}$ and $y_{l} = 1$;  $z_{m} = 1$, for all $m \in \mathbb{N} - \{l\}$ and $z_{l} = 0$. Observe that $\{x_{n}\}_{n = 1}^{\infty} = \{y_{n}z_{n}\}_{n = 1}^{\infty} \in P$, whereas none of $\{y_{n}\}_{n = 1}^{\infty}$ and $\{z_{n}\}_{n = 1}^{\infty}$ are in $P$, a contradiction, since $P$ is prime. This shows that $C$ is a pm-ring. Let $S =\{\{x_{n}\}_{n = 1}^{\infty} \in C \mid \psi_{2}(\{x_{n}\}_{n = 1}^{\infty}) = \emptyset\}$. Now, if we consider the semi-transitional ring $C_{S}$, then we may apply Theorem \eqref{SITARAM4} to conclude that $C_{S}$ is a pm-ring and that $\mathrm{Max}(C)$ is homeomorphic to $\mathrm{Max}(C_{S})$, in fact they are homeomorphic to the one-point compactification $\mathbb{N}^{\ast}$ of $\mathbb{N}$ (Example \eqref{nkb89}).
\end{examples}

The following examples show that the condition ``$A$ is a pm-ring" in Theorem \eqref{SITARAM4.5} is sufficient but not necessary. 

\begin{example}\label{SITARAMi}
Let $A$ be a B\'ezout domain which is not a local ring. For instance, we may consider $A = \mathbb{Z}$. Clearly, $A$ is not a pm-ring. Consider any nonempty set $\Lambda$. Let us now define a map $\psi: A \rightarrow \powerset(\Lambda)$ by $\psi(0) = \Lambda$ and $\psi(a) = \emptyset$, for all nonzero $a \in A$. It is now routine to check that $\psi$ is a semi-transitional map. Since $S = A - \{0\}$, we see that $A_{S}$ is a field, and so $A_{S}$ is a pm-ring.
\end{example}

\begin{example}\label{SITARAMii}
Consider the semi-transitional ring $A = k[x]$ introduced in Example \eqref{SITARAM1}. If we consider the set $S = \{f \in A \mid \psi_{1}(f) = \emptyset\}$, then Theorem \eqref{SITARAM4} implies that $i: A \rightarrow A_{S}$, defined by $i(a) = \frac{a}{1}$, is an embedding. Now, observe that $i(x)$ is not invertible in $A_{S}$, therefore $A_{S}$ is not a field. This implies that if $M \in \mathrm{Max}(A_{S})$, then $M \cap A$ is a nonzero prime ideal in $A$, and hence $M \cap A$ is a maximal ideal in $A$. This shows that the map $i$ induces the map $i^{\ast}: \mathrm{Max}(A_{S}) \rightarrow \mathrm{Max}(A)$ given by $i^{\ast}(M) = M \cap A$. Now, observe that the following statements are equivalent.

$(a)$ $k$ is algebraically closed.

$(b)$ $i^{\ast}$ is a homeomorphism.\\
To show that $(a) \Rightarrow (b)$, observe that $S = k^{\times}$, since $k$ is algebraically closed. This implies that $A = A_{S}$, so the result follows. Finally, to show $(b) \Rightarrow (a)$, assume on the contrary that $k$ is not algebraically closed. Then there exists an irreducible polynomial $h \in A$ such that $h \in S$. Since $i^{\ast}(M)$ does not intersect $S$, for all $M \in \mathrm{Max}(A_{S})$, so if we consider the maximal ideal $J = (h)$ in $A$, then there does not exist any $M \in \mathrm{Max}(A_{S})$ such that $i^{\ast}(M) = J$, a contradiction. Therefore it must be that $k$ is algebraically closed.  
\end{example}

In Example \eqref{SITARAMii}, if $k$ is not algebraically closed, then though it is clear that the map $i^{\ast}$ is not a homeomorphism, it does not guarantee that there does not exist any homeomorphism between $\mathrm{Max}(A)$ and $\mathrm{Max}(A_{S})$. For example, take $k$ as the field of reals $\mathbb{R}$ in Example \eqref{SITARAMii}. Let $\mathcal{H}$ denote the upper-half plane. We now show that $\mathrm{Max}(A)$ and $\mathrm{Max}(A_{S})$ are homeomorphic. So, at first, consider any $M \in \mathrm{Max}(A)$. Then $M$ is generated by some monic irreducible polynomial in $A$ of degree at most $2$. If $M = (x - \alpha)$, for some $\alpha \in \mathbb{R}$, then $M$ corresponds to the point $\alpha$ of $\mathbb{R} \cup \mathcal{H}$; if $M = ((x - z)(x - \overline{z}))$, for some $z \in \mathcal{H}$, then $M$ corresponds to the point $z$ of $\mathbb{R} \cup \mathcal{H}$. Now, consider any $M \in \mathrm{Max}(A_{S})$. Since $i^{\ast}(M)$ does not intersect $S$, so we get that $i^{\ast}(M) = (x - \beta)$, for some $\beta \in \mathbb{R}$. We now correspond the maximal ideal $M$ in $A_{S}$ to the point $\beta$ of $\mathbb{R}$. Clearly, the correspondence is injective. For surjectivity, let $\beta \in \mathbb{R}$, then the extension of the maximal ideal generated by $(x - \beta)$ in $A$ to $A_S$ is $\{\frac{(x-\beta)f(x)}{g(x)} \mid f(x) \in A \text{ and } g(x) \in S\}$ which is a maximal ideal in $A_{S}$, since it is the kernel of the homomorphism $\frac{l(x)}{m(x)} \mapsto \frac{l(\beta)}{m(\beta)}$ from $A_S$ onto $\mathbb{R}$. Now, since $A$ is a unique factorization domain, every basic closed set in $\mathrm{Max}(A)$ correspond to a finite subset of $\mathbb{R} \cup \mathcal{H}$ and vice versa. So, we conclude that $\mathrm{Max}(A)$ is homeomorphic to $\mathbb{R} \cup \mathcal{H}$ which is endowed with the co-finite topology. A similar argument as above shows that $\mathrm{Max}(A_{S})$ is homeomorphic to $\mathbb{R}$ which is endowed with the co-finite topology. Thus, any bijection from $\mathbb{R}$ to $\mathbb{R} \cup \mathcal{H}$ defines a homeomorphism from $\mathrm{Max}(A_{S})$ to $\mathrm{Max}(A)$.

\section{Ideals and induced filters associated with semi-transition maps}\label{sec:ideals}

For any semi-transition map $\psi: A \rightarrow \powerset(\Lambda)$, we have already seen that the map $\varphi: A_{S} \rightarrow \powerset(\Lambda)$, introduced in Theorem \eqref{SITARAM4}, is again a semi-transition map. Therefore, the ring $A_{S}$ and the map $\varphi$ are called the associated semi-transitional ring attached to $A$ and the associated semi-transition map respectively. Throughout this section, we work with the associated semi-transition map $\varphi$. We shall soon see that there is an interconnection between the ideals in the associated semi-transitional ring $A_{S}$ attached to $A$ and certain filters on $\Lambda$. In order to proceed, we first need the following.

\begin{theorem}\label{SITARAM6}
The following are true for the associated semi-transition map $\varphi$.\\
$(a)$ $u \in A_{S}^{\times}$ if and only if $\varphi(u) = \emptyset$.\\
$(b)$ $\varphi(a) = \varphi(-a)$, for all $a \in A_{S}$.\\
$(c)$ $\varphi(a) \cap \varphi(b) \subseteq \varphi(ax + by)$, for all $a, b, x, y \in A_{S}$.\\
$(d)$ $(a, b) = A_{S}$, for some $a, b \in A_{S}$, if and only if $\varphi(a) \cap \varphi(b) = \emptyset$.
\end{theorem}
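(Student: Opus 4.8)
The plan is to treat the four items in order, drawing on just two facts: first, Theorem~\ref{SITARAM4}, which ensures that $\varphi$ is itself a semi-transition map on $A_S$ and hence satisfies $(i)$--$(iii)$ of Definition~\ref{SITARAM}; and second, the defining feature of the \emph{associated} construction, namely that $S = \{a \in A \mid \psi(a) = \emptyset\}$ together with $\varphi(a/t) = \psi(a)$, so that $\varphi(b) = \emptyset$ forces the numerator of any representative of $b$ to lie in $S$. I would also record at the outset the consequence of $(iii)$ that $\varphi(p) \cap \varphi(q) \subseteq \varphi(p + q)$ for \emph{all} $p, q \in A_S$, since this is used more than once.

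For $(a)$, the forward implication falls straight out of $(i)$ and $(ii)$: if $uv = 1$ in $A_S$, then $\varphi(u) \cup \varphi(v) = \varphi(uv) = \varphi(1) = \emptyset$, hence $\varphi(u) = \emptyset$. For the converse I would write $u = a/t$ with $a \in A$, $t \in S$; then $\psi(a) = \varphi(u) = \emptyset$ places $a$ in $S$, so $t/a \in A_S$ is an inverse of $u$. Part $(b)$ I would dispatch by first deducing $\varphi(-1) = \emptyset$ from $\varphi(-1) \cup \varphi(-1) = \varphi\big((-1)(-1)\big) = \varphi(1) = \emptyset$, and then writing $\varphi(-a) = \varphi\big((-1)a\big) = \varphi(-1) \cup \varphi(a) = \varphi(a)$.

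For $(c)$, the idea is that $\varphi(ax) = \varphi(a) \cup \varphi(x) \supseteq \varphi(a)$ and likewise $\varphi(by) \supseteq \varphi(b)$ by $(i)$, so $\varphi(a) \cap \varphi(b) \subseteq \varphi(ax) \cap \varphi(by) \subseteq \varphi(ax + by)$, the last inclusion being the recorded consequence of $(iii)$ applied with $p = ax$ and $q = by$. The ``only if'' half of $(d)$ is then immediate: if $ax + by = 1$ for some $x, y \in A_S$, then $\varphi(a) \cap \varphi(b) \subseteq \varphi(1) = \emptyset$.

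The real work---and where I expect the genuine obstacle to lie---is the ``if'' direction of $(d)$. Assuming $\varphi(a) \cap \varphi(b) = \emptyset$, I would write $a = a'/t$ and $b = b'/s$ with $a', b' \in A$ and $s, t \in S$, so that $\psi(a') \cap \psi(b') = \emptyset$, and invoke the last clause of $(iii)$ in Definition~\ref{SITARAM} to obtain $\big((a')^{m}, (b')^{n}\big) = (z)$ in $A$ for some $z \in A$ and $m, n \in \mathbb{N}$. The key point is to verify that $z \in S$, i.e.\ $\psi(z) = \emptyset$: writing $(a')^{m} = z\alpha$, condition $(i)$ gives $\psi\big((a')^{m}\big) = \psi(z) \cup \psi(\alpha) \supseteq \psi(z)$, while $(i)$ iterated also gives $\psi\big((a')^{m}\big) = \psi(a')$, so $\psi(z) \subseteq \psi(a')$; symmetrically $\psi(z) \subseteq \psi(b')$, whence $\psi(z) \subseteq \psi(a') \cap \psi(b') = \emptyset$. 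By part $(a)$, $z/1$ is therefore a unit of $A_S$, so $\big((a')^{m}/1,\, (b')^{n}/1\big) = (z/1) = A_S$; and since $t/1$ and $s/1$ are units in $A_S$, we have $(a) = (a'/1)$ and $(b) = (b'/1)$ as ideals of $A_S$, whence $(a, b) = (a'/1, b'/1) \supseteq \big((a')^{m}/1,\, (b')^{n}/1\big) = A_S$. The only bookkeeping to watch is consistency of the descent to $A$ and the return to $A_S$---that the representatives $a', b'$ genuinely satisfy $\psi(a') = \varphi(a)$ and $\psi(b') = \varphi(b)$, and that the extension of the principal ideal $(z)$ to $A_S$ is again principal, generated by $z/1$---but this is routine.
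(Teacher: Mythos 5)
Your proposal is correct, and parts $(a)$--$(c)$ together with the ``only if'' half of $(d)$ proceed exactly as in the paper (the paper's argument for $(b)$ is phrased through $\psi(d)\cap\psi(-d)=\psi(d)\cap(\psi(d)\cup\psi(-1))$, but it rests on the same two facts you use, namely $\psi(-1)=\emptyset$ and multiplicativity). Where you genuinely diverge is the ``if'' direction of $(d)$, which you correctly identify as the substantive part but then prove by the longer of two available routes: you descend to representatives in $A$, invoke the \emph{principality} clause of Definition~\ref{SITARAM}$(iii)$ to get $\bigl((a')^{m},(b')^{n}\bigr)=(z)$, verify $\psi(z)\subseteq\psi(a')\cap\psi(b')=\emptyset$ so that $z/1$ is a unit, and climb back up to $(a,b)=A_{S}$. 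The paper instead uses only the \emph{first} clause of $(iii)$, already transported to $A_{S}$ in Theorem~\ref{SITARAM4}: there is some $c\in(a,b)$ with $\varphi(c)=\varphi(a)\cap\varphi(b)=\emptyset$, so $c\in A_{S}^{\times}$ by part $(a)$ and $(a,b)=A_{S}$ immediately. Your argument is sound---the verification that $\psi(z)=\emptyset$ and the bookkeeping between $A$ and $A_{S}$ all check out---but it uses strictly more of the definition than necessary; the equality $\varphi(a)\cap\varphi(b)=\varphi(c)$ with $c\in(a,b)$ was built into the axioms precisely so that this step would be one line, and the powers $m,n$ never need to appear.
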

\begin{proof}
$(a)$. Assume first that $u \in A^{\times}$. Then there exists $v \in A$ such that $uv = 1$, and so $\varphi(u) \cup \varphi(v) = \emptyset$. Thus, $\varphi(u) = \emptyset$. Conversely, suppose that $\varphi(u) = \emptyset$. Let $u = \frac{j}{s}$, for some $j \in A$ and $s \in S$. Then $\psi(j) = \emptyset$, and so $j \in S$. This shows that $u \in A_{S}^{\times}$. \\
$(b)$. It is enough to show that $\psi(d) = \psi(-d)$, for all $d \in A$. So, let $d \in A$. Since $\psi(d) \cap \psi(-d) = \psi(d) \cap (\psi(d) \cup \psi(-1)) = \psi(d)$, and since $\psi(-1) = \emptyset$, so $\psi(d) \subseteq \psi(-d)$. Similarly, as $\psi(-d) \cap \psi(d) = \psi(-d) \cap (\psi(-d) \cup \psi(-1)) = \psi(-d)$, we see that $\psi(-d) \subseteq \psi(d)$. \\
$(c)$. It suffices to show that $\psi(c) \cap \psi(d) \subseteq \psi(cm + dn)$, for all $c, d, m, n \in A$. Since $\psi(cm) \cap \psi(dn) \subseteq \psi(cm + dn)$, and since $\psi(c) \subseteq \psi(cm)$ and $\psi(d) \subseteq \psi(dn)$, therefore the result follows. \\
$(d)$. If $(a, b) = A_{S}$, for some $a, b \in A_{S}$, then there exists $x, y \in A_{S}$ such that $ax + by = 1$. This implies that $\varphi(a) \cap \varphi(b) \subseteq \varphi(ax + by) = \varphi(1) = \emptyset$. Hence, we get that $\varphi(a) \cap \varphi(b) = \emptyset$. Conversely, assume that $\varphi(a) \cap \varphi(b) = \emptyset$. Since $\varphi(c) = \emptyset$, for some $c \in (a, b)$, we conclude that $c \in A_{S}^{\times}$. Thus, $(a, b) = A_{S}$. 
\end{proof}

\begin{definition}\label{SITARAM7}
A nonempty subfamily $\mathscr{F}$ of $\varphi(A_{S})$ is called a $\varphi$-filter on $\Lambda$ provided that\\
$(i)$ $\emptyset \notin \mathscr{F}$;\\
$(ii)$ if $\varphi(a), \varphi(b) \in \mathscr{F}$, then $\varphi(a) \cap \varphi(b) \in \mathscr{F}$; and\\
$(iii)$ if $\varphi(a) \in \mathscr{F}$, $\varphi(c) \in \varphi(A)$, and $\varphi(a) \subseteq \varphi(c)$, then $\varphi(c) \in \mathscr{F}$.
\end{definition}

\begin{theorem}\label{SITARAM8}
The following are true for the semi-transition map $\varphi$. \\
$(a)$ If $I$ is an ideal in $A_{S}$, then the family 
$$\varphi[I] = \{\varphi(a) \mid a \in I\}$$
is a $\varphi$-filter on $\Lambda$.\\
$(b)$ If $\mathscr{F}$ is an $\varphi$-filter on $\Lambda$, then the family 
$$\varphi^{-1}[\mathscr{F}] = \{a \in A_{S} \mid \varphi(a) \in \mathscr{F}\}$$
is an ideal in $A_{S}$.
\end{theorem}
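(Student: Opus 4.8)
The plan is to check the two defining clauses directly, playing off the axioms of a semi-transition map in Definition~\ref{SITARAM} together with the auxiliary facts in Theorem~\ref{SITARAM6}. Throughout I take $I$ in part $(a)$ to be a \emph{proper} ideal, which is what makes clause $(i)$ of Definition~\ref{SITARAM7} hold: a proper ideal contains no units, so by Theorem~\ref{SITARAM6}$(a)$ we have $\varphi(a)\neq\emptyset$ for every $a\in I$, whence $\emptyset\notin\varphi[I]$; also $\varphi[I]\subseteq\varphi(A_S)$ is immediate and $\varphi(0)=\Lambda$ shows $\varphi[I]$ is nonempty. For clause $(ii)$, given $a,b\in I$, axiom $(iii)$ of Definition~\ref{SITARAM} (as transported to $\varphi$ in Theorem~\ref{SITARAM4}) furnishes $c\in(a,b)\subseteq I$ with $\varphi(a)\cap\varphi(b)=\varphi(c)$, so $\varphi(a)\cap\varphi(b)\in\varphi[I]$. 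For clause $(iii)$, if $a\in I$ and $\varphi(c)\in\varphi(A_S)$ satisfies $\varphi(a)\subseteq\varphi(c)$, then $\varphi(ac)=\varphi(a)\cup\varphi(c)=\varphi(c)$ by axiom $(i)$ and $ac\in I$, so $\varphi(c)=\varphi(ac)\in\varphi[I]$.

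For part $(b)$, set $J=\varphi^{-1}[\mathscr{F}]$. First $0\in J$: picking any $\varphi(a)\in\mathscr{F}$ (possible since $\mathscr{F}\neq\emptyset$), we have $\varphi(a)\subseteq\Lambda=\varphi(0)\in\varphi(A_S)$, so clause $(iii)$ of Definition~\ref{SITARAM7} gives $\varphi(0)\in\mathscr{F}$. For closure under addition, let $a,b\in J$; then $\varphi(a),\varphi(b)\in\mathscr{F}$, so $\varphi(a)\cap\varphi(b)\in\mathscr{F}$ by clause $(ii)$ of Definition~\ref{SITARAM7}, and since Theorem~\ref{SITARAM6}$(c)$ (with $x=y=1$) yields $\varphi(a)\cap\varphi(b)\subseteq\varphi(a+b)\in\varphi(A_S)$, clause $(iii)$ of Definition~\ref{SITARAM7} gives $\varphi(a+b)\in\mathscr{F}$, i.e. $a+b\in J$. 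For absorption, let $a\in J$ and $r\in A_S$; then $\varphi(ra)=\varphi(r)\cup\varphi(a)\supseteq\varphi(a)$ by axiom $(i)$, so once more clause $(iii)$ of Definition~\ref{SITARAM7} forces $\varphi(ra)\in\mathscr{F}$ and hence $ra\in J$.

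No step presents a genuine difficulty; the whole argument is a translation between the filter axioms and the ring axioms, in which intersections of filter members are realized as $\varphi$ of the element $c\in(a,b)$ produced by axiom $(iii)$, unions are realized as products via axiom $(i)$, and the inclusion $\varphi(a)\cap\varphi(b)\subseteq\varphi(a+b)$ is precisely what reconciles upward closure of the filter with ring addition. The one place where care is needed is the empty-set clause of part $(a)$, which is why the statement must be read with $I$ proper and uses Theorem~\ref{SITARAM6}$(a)$.
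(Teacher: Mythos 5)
Your proof is correct and follows essentially the same route as the paper's: clause $(i)$ via the absence of units in a proper ideal, clause $(ii)$ via the element $c\in(a,b)\subseteq I$ with $\varphi(a)\cap\varphi(b)=\varphi(c)$, clause $(iii)$ via $\varphi(ra)=\varphi(r)\cup\varphi(a)$, and part $(b)$ via $\varphi(a)\cap\varphi(b)\subseteq\varphi(a+b)$ together with upward closure. The only cosmetic difference is that the paper checks the subgroup condition as $a-b\in J$ using $\varphi(b)=\varphi(-b)$, while you check $a+b\in J$ and recover negation from absorption with $r=-1$; your explicit remark that $I$ must be proper is a point the paper leaves implicit.
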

\begin{proof}
$(a)$. $(i)$ Since $I$ contains no unit, $\emptyset \notin \varphi[I]$.\\
$(ii)$ Let $\varphi(a), \varphi(b) \in \varphi[I]$, where $a, b \in I$. Since $\varphi(a) \cap \varphi(b) = \varphi(c)$, for some $c \in (a, b)$, and since $(a, b) \subseteq I$, we conclude that $\varphi(a) \cap \varphi(b) = \varphi(c) \in \varphi[I]$.\\
$(iii)$ Let $\varphi(a) \in \varphi[I]$, where $a \in I$, and let $\varphi(a) \subseteq \varphi(r)$, for some $r \in A_{S}$. Since $ra \in I$, we get that $\varphi(r) = \varphi(r) \cup \varphi(a) = \varphi(ra) \in \varphi[I]$.\\
$(b)$. Let $J = \varphi^{-1}[\mathscr{F}]$. Since $\emptyset \notin \mathscr{F}$, $J$ contains no unit. Let $a, b \in J$ and let $r \in A_{S}$. Since $\varphi(a) \cap \varphi(b) \subseteq \varphi(a + b)$, and since $\varphi(b) = \varphi(-b)$, we get that $\varphi(a) \cap \varphi(b) \subseteq \varphi(a - b)$. Thus, $a - b \in J$. Since $\varphi(a) \subseteq \varphi(ra)$, and since $\varphi(a) \in \mathscr{F}$, we conclude that $\varphi(ra) \in \mathscr{F}$, that is, $ra \in J$.
\end{proof}

By a $\varphi$-ultrafilter on $\Lambda$ is meant a maximal $\varphi$-filter, that is, one not contained in any other $\varphi$-filter. Thus, an $\varphi$-ultrafilter is a maximal subfamily of $\varphi(A_{S})$ with the finite intersection property. It follows from the maximal principle that every subfamily of $\varphi(A_{S})$ with the finite intersection property is contained in some $\varphi$-ultrafilter.

\begin{theorem}\label{SITARAM9}
The following are true for the semi-transition map $\varphi$. \\
$(a)$ If $M$ is a maximal ideal in $A_{S}$, then $\varphi[M]$ is an $\varphi$-ultrafilter on $\Lambda$. \\
$(b)$ If $\mathscr{A}$ is an $\varphi$-ultrafilter on $\Lambda$, then $\varphi^{-1}[\mathscr{A}]$ is a maximal ideal in $A_{S}$. \\
The mapping $\varphi$ is one-one from the set of all maximal ideals in $A_{S}$ onto the set of all $\varphi$-ultrafilters.
\end{theorem}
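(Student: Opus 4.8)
\emph{Proof proposal.} The plan is to run the classical maximal-ideal/ultrafilter duality (analogous to the $z$-ultrafilter correspondence in \cite{gj60}) through the two maps $I \mapsto \varphi[I]$ and $\mathscr{F} \mapsto \varphi^{-1}[\mathscr{F}]$ supplied by Theorem~\ref{SITARAM8}. The whole statement reduces to a pair of ``round-trip'' identities: (A) $\varphi^{-1}[\varphi[M]] = M$ for every maximal ideal $M$ of $A_{S}$, and (B) $\varphi[\varphi^{-1}[\mathscr{A}]] = \mathscr{A}$ for every $\varphi$-ultrafilter $\mathscr{A}$. Identity (A) is immediate: $\varphi^{-1}[\varphi[M]]$ is an ideal by Theorem~\ref{SITARAM8}$(b)$, it visibly contains $M$, and it is proper because $\emptyset \notin \varphi[M]$ and, by Theorem~\ref{SITARAM6}$(a)$, no element with nonempty $\varphi$-value is a unit; maximality of $M$ then forces equality. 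For (B), the inclusion $\varphi[\varphi^{-1}[\mathscr{A}]] \subseteq \mathscr{A}$ is automatic, while the reverse inclusion uses that every member of a $\varphi$-filter, being an element of $\varphi(A_{S})$ by Definition~\ref{SITARAM7}, is literally of the form $\varphi(a)$; such an $a$ then lies in $\varphi^{-1}[\mathscr{A}]$, so $\varphi(a) \in \varphi[\varphi^{-1}[\mathscr{A}]]$.

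For $(a)$, Theorem~\ref{SITARAM8}$(a)$ already gives that $\varphi[M]$ is a $\varphi$-filter, so only maximality remains. Given a $\varphi$-filter $\mathscr{F}$ with $\varphi[M] \subseteq \mathscr{F}$, the set $\varphi^{-1}[\mathscr{F}]$ is a proper ideal (Theorem~\ref{SITARAM8}$(b)$ together with Theorem~\ref{SITARAM6}$(a)$, since $\emptyset \notin \mathscr{F}$), and it contains $M$ because $a \in M \Rightarrow \varphi(a) \in \varphi[M] \subseteq \mathscr{F}$. Maximality of $M$ yields $\varphi^{-1}[\mathscr{F}] = M$; applying $\varphi[\,\cdot\,]$ and the easy inclusion $\mathscr{F} \subseteq \varphi[\varphi^{-1}[\mathscr{F}]]$ (again because the members of $\mathscr{F}$ are of the form $\varphi(a)$) gives $\mathscr{F} \subseteq \varphi[M]$, hence $\mathscr{F} = \varphi[M]$. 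For $(b)$, $\varphi^{-1}[\mathscr{A}]$ is a proper ideal by the same reasoning; choose a maximal ideal $M \supseteq \varphi^{-1}[\mathscr{A}]$. By $(a)$, $\varphi[M]$ is a $\varphi$-ultrafilter, and $\mathscr{A} \subseteq \varphi[M]$ since each member $\varphi(a)$ of $\mathscr{A}$ has $a \in \varphi^{-1}[\mathscr{A}] \subseteq M$. Maximality of $\mathscr{A}$ forces $\mathscr{A} = \varphi[M]$, so by (A), $M = \varphi^{-1}[\varphi[M]] = \varphi^{-1}[\mathscr{A}]$, proving $\varphi^{-1}[\mathscr{A}]$ maximal.

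The bijectivity claim is then formal. The assignment $M \mapsto \varphi[M]$ maps $\mathrm{Max}(A_{S})$ into the set of $\varphi$-ultrafilters by $(a)$; it is injective by (A), since $\varphi[M_{1}] = \varphi[M_{2}]$ implies $M_{1} = \varphi^{-1}[\varphi[M_{1}]] = \varphi^{-1}[\varphi[M_{2}]] = M_{2}$; and it is surjective because, given a $\varphi$-ultrafilter $\mathscr{A}$, the maximal ideal $\varphi^{-1}[\mathscr{A}]$ of $(b)$ satisfies $\varphi[\varphi^{-1}[\mathscr{A}]] = \mathscr{A}$ by (B). There is no deep obstruction here; the step most prone to error—and the only one requiring genuine care—is the repeated verification that the preimage ideals $\varphi^{-1}[\mathscr{F}]$ and $\varphi^{-1}[\mathscr{A}]$ are \emph{proper}, which is precisely where Theorem~\ref{SITARAM6}$(a)$ (units $\leftrightarrow$ empty $\varphi$-value) must be invoked, together with the small bookkeeping remark that membership of an object $X$ in a $\varphi$-filter already exhibits $X$ as $\varphi(a)$ for some $a \in A_{S}$, which is what makes $\varphi[\,\cdot\,]$ and $\varphi^{-1}[\,\cdot\,]$ mutually inverse on the relevant families.
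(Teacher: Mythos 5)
Your proof is correct and follows essentially the same route as the paper, which simply observes that $\varphi[\,\cdot\,]$ and $\varphi^{-1}[\,\cdot\,]$ preserve inclusion and invokes Theorem~\ref{SITARAM8}; you have merely made explicit the round-trip identities $\varphi^{-1}[\varphi[M]] = M$ and $\varphi[\varphi^{-1}[\mathscr{A}]] = \mathscr{A}$ and the properness checks via Theorem~\ref{SITARAM6}$(a)$ that the paper leaves implicit. No gaps.
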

\begin{proof}
Since $\varphi$ and $\varphi^{-1}$ preserve inclusion, the result follows immediately from Theorem \eqref{SITARAM8}.
\end{proof}

\begin{theorem}\label{SITARAM10}
The following are true for the semi-transition map $\varphi$. \\
$(a)$ Let $M$ be a maximal ideal in $A_{S}$. If $\varphi(a)$ meets every member of $\varphi[M]$, then $a \in M$. \\
$(b)$ Let $\mathscr{A}$ be an $\varphi$-ultrafilter on $\Lambda$. If $\varphi(a)$ meets every member of $\mathscr{A}$, then $\varphi(a) \in \mathscr{A}$.
\end{theorem}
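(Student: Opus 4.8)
The plan is to prove part $(a)$ by contraposition and then deduce part $(b)$ from it. For $(a)$, suppose $a \notin M$. Since $M$ is maximal in $A_{S}$, the ideal $M + (a)$ strictly contains $M$ and hence equals $A_{S}$, so there exist $m \in M$ and $s \in A_{S}$ with $m + as = 1$. Applying property $(c)$ of Theorem~\eqref{SITARAM6} to the elements $as$ and $m$ with both coefficients equal to $1$ gives $\varphi(as) \cap \varphi(m) \subseteq \varphi(as + m) = \varphi(1) = \emptyset$; and since $\varphi(a) \subseteq \varphi(as)$ by condition $(i)$ of Definition~\eqref{SITARAM} applied to $\varphi$, we obtain $\varphi(a) \cap \varphi(m) = \emptyset$. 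As $\varphi(m) \in \varphi[M]$, this exhibits a member of $\varphi[M]$ not met by $\varphi(a)$, contradicting the hypothesis. Therefore $a \in M$.

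For $(b)$, I would set $M = \varphi^{-1}[\mathscr{A}]$, which is a maximal ideal in $A_{S}$ by Theorem~\eqref{SITARAM9}$(b)$. Because a $\varphi$-filter is by definition a subfamily of $\varphi(A_{S})$ (Definition~\eqref{SITARAM7}), every member of $\mathscr{A}$ has the form $\varphi(c)$ with $c \in A_{S}$, and any such $c$ lies in $M$; combined with the evident inclusion $\varphi[M] \subseteq \mathscr{A}$ this yields $\varphi[M] = \mathscr{A}$ exactly. Now the hypothesis of $(b)$ asserts that $\varphi(a)$ meets every member of $\varphi[M]$, so part $(a)$ gives $a \in M$, that is, $\varphi(a) \in \varphi[M] = \mathscr{A}$.

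I do not expect a real obstacle here; the statement is the abstract counterpart of the classical $\mathbf{Z}$-ultrafilter lemma for rings of continuous functions (cf.\ \cite{gj60}). The two points needing a little care are: $(i)$ observing that the single identity $m + as = 1$, together with conditions $(i)$ and $(c)$ of the semi-transition map, already forces $\varphi(a) \cap \varphi(m) = \emptyset$, so that the ``principal ideal'' clause of Definition~\eqref{SITARAM}$(iii)$ is not invoked (one of the two elements is a unit); and $(ii)$ verifying $\varphi[M] = \mathscr{A}$ on the nose rather than merely up to the bijection of Theorem~\eqref{SITARAM9}, which is immediate from $\mathscr{A} \subseteq \varphi(A_{S})$. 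One could alternatively prove $(b)$ directly by checking that $\mathscr{A} \cup \{\varphi(a)\}$ generates a $\varphi$-filter and invoking maximality, but routing through $(a)$ is shorter.
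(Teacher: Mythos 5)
Your proof is correct, but it runs in the opposite direction from the paper's. The paper proves $(b)$ first, purely on the filter side: since $\varphi(a)$ meets every member of $\mathscr{A}$, the family $\mathscr{A} \cup \{\varphi(a)\}$ generates a $\varphi$-filter, which by maximality must equal $\mathscr{A}$; part $(a)$ is then read off from the bijection of Theorem \eqref{SITARAM9}. You instead prove $(a)$ first, purely on the ring side: $a \notin M$ forces $m + as = 1$, and Theorem \eqref{SITARAM6}$(c)$ together with condition $(i)$ of Definition \eqref{SITARAM} yields $\varphi(a) \cap \varphi(m) = \emptyset$, contradicting the hypothesis; part $(b)$ then follows after checking $\varphi[\varphi^{-1}[\mathscr{A}]] = \mathscr{A}$. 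Both arguments are sound. Your route has the advantage of sidestepping the (slightly glossed-over) verification that $\mathscr{A} \cup \{\varphi(a)\}$ really does generate a $\varphi$-filter in the sense of Definition \eqref{SITARAM7} --- i.e.\ that finite intersections stay inside $\varphi(A_S)$ and are nonempty --- at the cost of needing the explicit identification $\varphi[M] = \mathscr{A}$, which you correctly supply; your remark that the principal-ideal clause of Definition \eqref{SITARAM}$(iii)$ is never invoked is also accurate. The paper's route is the more faithful transcription of the classical $Z$-ultrafilter argument of Gillman--Jerison and generalizes to $\varphi$-ultrafilters not a priori tied to a maximal ideal, but as Theorem \eqref{SITARAM9} makes the two pictures equivalent, nothing is lost either way.
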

\begin{proof}
By Theorem \eqref{SITARAM9}, the two statements are equivalent. Observe that in $(b)$, $\mathscr{A} \cup \{\varphi(a)\}$ generates an $\varphi$-filter. As this contains the maximal $\varphi$-filter $\mathscr{A}$, it must be $\mathscr{A}$.
\end{proof}

\begin{definition}\label{SITARAM11}
An ideal $I$ in $A_{S}$ is called an $\varphi$-ideal if $\varphi(a) \in \varphi[I]$ implies $a \in I$, that is, if $I = \varphi^{-1}[\varphi[I]]$.
\end{definition}

Note that every maximal ideal is an $\varphi$-ideal, and so, intersection of maximal ideals is an $\varphi$-ideal. Also, if $\mathscr{F}$ is an $\varphi$-filter, then $\varphi^{-1}[\mathscr{F}]$ is an $\varphi$-ideal. In particular, if $I$ is an ideal in $A_{S}$, then $\varphi^{-1}[\varphi[I]]$ is an $\varphi$-ideal. Moreover, every $\varphi$-ideal is semi-prime, and so, every $\varphi$-ideal is an intersection of prime ideals. 

\begin{theorem}\label{SITARAM12}
If $I^{2}$ is an $\varphi$-ideal for some ideal $I$ in $A_{S}$, then $I^{2} = I$.
\end{theorem}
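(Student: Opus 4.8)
The plan is to exploit the trivial half of the claim: for any ideal $I$ we always have $I^{2} \subseteq I$, so the entire content of the statement is the reverse inclusion $I \subseteq I^{2}$. To obtain it, I would fix an arbitrary $a \in I$ and aim to show $a \in I^{2}$ by invoking the defining property of a $\varphi$-ideal (Definition \eqref{SITARAM11}): since $I^{2}$ is assumed to be a $\varphi$-ideal, it suffices to exhibit an element $b \in I^{2}$ with $\varphi(b) = \varphi(a)$.

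The key observation is that $\varphi$ is insensitive to powers. By condition $(i)$ of Definition \eqref{SITARAM}, which $\varphi$ inherits via Theorem \eqref{SITARAM4}, we have $\varphi(a^{2}) = \varphi(a\cdot a) = \varphi(a) \cup \varphi(a) = \varphi(a)$. Since $a \in I$ forces $a^{2} \in I^{2}$, this gives $\varphi(a) = \varphi(a^{2}) \in \varphi[I^{2}]$, and hence $a \in I^{2}$ because $I^{2}$ is a $\varphi$-ideal. Therefore $I \subseteq I^{2}$, and combined with $I^{2} \subseteq I$ we conclude $I^{2} = I$.

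I do not anticipate a real obstacle: the whole argument hinges on the idempotent-on-zero-sets behaviour $\varphi(a) = \varphi(a^{2})$, and the element $a^{2}$ is visibly in $I^{2}$. The only point requiring a moment's care is that the membership $\varphi(a) \in \varphi[I^{2}]$ must be witnessed by a genuine element of $I^{2}$, which $a^{2}$ supplies. This is the same mechanism underlying the earlier remark that every $\varphi$-ideal is semi-prime — $a^{n} \in I$ implies $\varphi(a) = \varphi(a^{n}) \in \varphi[I]$, hence $a \in I$ — applied here with $n = 2$ and to the specific $\varphi$-ideal $I^{2}$.
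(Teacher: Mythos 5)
Your argument is exactly the paper's: take $a \in I$, note $a^{2} \in I^{2}$ and $\varphi(a) = \varphi(a^{2}) \in \varphi[I^{2}]$, then use the $\varphi$-ideal property of $I^{2}$ to conclude $a \in I^{2}$. The proposal is correct and matches the paper's proof in all essentials.
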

\begin{proof}
Let $a \in I$. Then $a^{2} \in I^{2}$. This implies that $\varphi(a) = \varphi(a^{2}) \in \varphi[I^{2}]$. Since $I^{2}$ is an $\varphi$-ideal and since $\varphi(a) \in \varphi[I^{2}]$, we conclude that $a \in I^{2}$, that is, $I \subseteq I^{2}$.    
\end{proof}

An ideal $I$ in $A_{S}$ is called strongly irreducible if for ideals $J$ and $K$ in $A_{S}$, the inclusion $J \cap K \subseteq I$ implies that either $J \subseteq I$ or $K \subseteq I$. A slight generalization of this concept is semi-strongly irreducible ideal. An ideal $I$ in $A_{S}$ is called a semi-strongly irreducible if for ideals $J$ and $K$ in $A_{S}$, the inclusion $J \cap K \subseteq I$ implies that either $J^{2} \subseteq I$ or $K^{2} \subseteq I$. For further details see \cite{a08} and \cite{hy24}.

\begin{theorem}\label{SITARAM13}
The following statements are equivalent for a $\varphi$-ideal $I$ in $A_{S}$. \\
$(a)$ $I$ is semi-strongly irreducible.\\
$(b)$ $I$ is strongly irreducible.\\
$(c)$ $I$ is prime.
\end{theorem}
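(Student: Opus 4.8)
The plan is to establish the cycle of implications $(c)\Rightarrow(b)\Rightarrow(a)\Rightarrow(c)$, of which the first two are purely formal and hold for every ideal of $A_{S}$, while the real content sits in $(a)\Rightarrow(c)$; this is the only place where the hypothesis that $I$ is a $\varphi$-ideal is used, and even there it enters solely through the fact, recorded right after Definition~\ref{SITARAM11}, that every $\varphi$-ideal is semi-prime.

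For $(c)\Rightarrow(b)$, I would argue: if $I$ is prime and $J\cap K\subseteq I$ for ideals $J,K$ of $A_{S}$, then $JK\subseteq J\cap K\subseteq I$, so primeness of $I$ forces $J\subseteq I$ or $K\subseteq I$; hence $I$ is strongly irreducible. For $(b)\Rightarrow(a)$: if $I$ is strongly irreducible and $J\cap K\subseteq I$, then $J\subseteq I$ or $K\subseteq I$, whence $J^{2}\subseteq I$ or $K^{2}\subseteq I$, so $I$ is semi-strongly irreducible. Neither step uses anything about $\varphi$.

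The key step is $(a)\Rightarrow(c)$. Suppose $I$ is a semi-strongly irreducible $\varphi$-ideal and $ab\in I$ for some $a,b\in A_{S}$; I must show $a\in I$ or $b\in I$. First I would prove the auxiliary claim that $(a)\cap(b)\subseteq I$: if $c\in(a)\cap(b)$, write $c=ra=sb$ with $r,s\in A_{S}$ (using that $A_{S}$ is unital, so principal ideals have the expected form), whence $c^{2}=rs\,ab\in I$, and since $I$ is semi-prime, $c\in I$. Now I apply the semi-strong irreducibility of $I$ to the ideals $J=(a)$ and $K=(b)$: from $J\cap K\subseteq I$ we obtain $(a^{2})=J^{2}\subseteq I$ or $(b^{2})=K^{2}\subseteq I$, i.e. $a^{2}\in I$ or $b^{2}\in I$. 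Invoking once more that the $\varphi$-ideal $I$ is semi-prime gives $a\in I$ or $b\in I$, so $I$ is prime.

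I do not anticipate a serious obstacle here; the only thing requiring care is recognizing that the two ingredients---the elementary lemma $(a)\cap(b)\subseteq I$ whenever $ab\in I$ and $I$ is semi-prime, together with the semi-primeness of $\varphi$-ideals---already promote ``semi-strongly irreducible'' to ``prime,'' so that the a priori strictly increasing chain $\text{prime}\Rightarrow\text{strongly irreducible}\Rightarrow\text{semi-strongly irreducible}$ collapses to an equivalence on $\varphi$-ideals. It is worth noting in passing that the same argument yields the equivalence for an arbitrary semi-prime ideal of $A_{S}$, not merely a $\varphi$-ideal.
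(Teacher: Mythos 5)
Your proof is correct, but it distributes the work differently from the paper's. The paper runs the cycle $(a)\Rightarrow(b)\Rightarrow(c)\Rightarrow(a)$, placing the content in $(a)\Rightarrow(b)$ and $(b)\Rightarrow(c)$ and invoking the $\varphi$-ideal hypothesis at each of those steps through the identity $\varphi(x)=\varphi(x^{2})$; you run the reverse cycle $(c)\Rightarrow(b)\Rightarrow(a)\Rightarrow(c)$, so the two formal implications come for free and all the content is concentrated in the single, strongest implication $(a)\Rightarrow(c)$. You also funnel the $\varphi$-ideal hypothesis entirely through the remark following Definition \eqref{SITARAM11} that every $\varphi$-ideal is semi-prime, and your argument there is sound: the auxiliary claim that $ab\in I$ forces $(a)\cap(b)\subseteq I$ (via $c=ra=sb\Rightarrow c^{2}=rs\,ab\in I$), the identification $(a)^{2}=(a^{2})$, and a second application of semi-primeness together yield primeness. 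What this buys is a genuinely more general statement --- the three conditions are equivalent for any semi-prime proper ideal of $A_{S}$, not merely for $\varphi$-ideals --- whereas the paper's proof, although it too uses nothing beyond semi-primeness (that is precisely what $\varphi(x)=\varphi(x^{2})\in\varphi[I]\Rightarrow x\in I$ amounts to), does not isolate this. The only point worth stating explicitly is that primeness requires $I$ to be proper, which is automatic here since a $\varphi$-ideal contains no unit.
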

\begin{proof}
$(a) \Rightarrow (b)$. Assume that $J \cap K \subseteq I$. Then either $J^{2} \subseteq I$ or $K^{2} \subseteq I$. Without loss of generality, suppose that $J^{2} \subseteq I$. Let $a \in J$, then $a^{2} \in J^{2}$, and so $a^{2} \in I$. This implies that $\varphi(a) = \varphi(a^{2}) \in \varphi[I]$, and so $a \in I$. Hence, $J \subseteq I$.\\
$(b) \Rightarrow (c)$. Let $JK \subseteq I$. We now show that $J \cap K \subseteq I$. If $a \in (J \cap K) - I$, then $a^{2} \in JK$, and so $\varphi(a) = \varphi(a^{2}) \in \varphi[I]$. This implies that $a \in I$, a contradiction. Thus, $J \cap K \subseteq I$. As $I$ is strongly irreducible, so either $J \subseteq I$ or $K \subseteq I$.\\
$(c) \Rightarrow (a)$. If $J \cap K \subseteq I$, then $JK \subseteq I$, and so either $J \subseteq I$ or $K \subseteq I$, that is, either $J^{2} \subseteq I$ or $K^{2} \subseteq I$.
\end{proof}

By a prime $\varphi$-filter, we mean an $\varphi$-filter $\mathscr{F}$ that satisfies the following property: whenever $\varphi(a) \cup \varphi(b) \in \mathscr{F}$, then either $\varphi(a) \in \mathscr{F}$ or $\varphi(b) \in \mathscr{F}$.

\begin{theorem}\label{SITARAM14}
The following are true for the semi-transition map $\varphi$. \\
$(a)$ If $P$ is a prime $\varphi$-ideal in $A_{S}$, then $\varphi[P]$ is a prime $\varphi$-filter.\\
$(b)$ If $\mathscr{F}$ is a prime $\varphi$-filter on $\Lambda$, then $\varphi^{-1}[\mathscr{F}]$ is a prime $\varphi$-ideal.
\end{theorem}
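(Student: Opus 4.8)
The plan is to lean on machinery already established: Theorem~\ref{SITARAM8} tells us that $\varphi[I]$ is a $\varphi$-filter for every ideal $I$ and that $\varphi^{-1}[\mathscr{F}]$ is an ideal for every $\varphi$-filter $\mathscr{F}$, while the remark following Definition~\ref{SITARAM11} records that $\varphi^{-1}[\mathscr{F}]$ is in fact always a $\varphi$-ideal. With these in hand, in each part the filter/ideal axioms are free and only the \emph{primeness} clause needs verification. The single computational device driving both directions is condition~$(i)$ of Definition~\ref{SITARAM}, the identity $\varphi(ab) = \varphi(a) \cup \varphi(b)$, which translates products of ring elements into unions of subsets of $\Lambda$.

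For part~$(a)$: since a prime ideal $P$ is proper, Theorem~\ref{SITARAM8}(a) gives that $\varphi[P]$ is a $\varphi$-filter. To check primeness, I would take $a,b \in A_S$ with $\varphi(a) \cup \varphi(b) \in \varphi[P]$. By condition~$(i)$ this says $\varphi(ab) \in \varphi[P]$, so $\varphi(ab) = \varphi(p)$ for some $p \in P$; because $P$ is a \emph{$\varphi$-ideal}, this forces $ab \in P$. Primeness of $P$ then yields $a \in P$ or $b \in P$, hence $\varphi(a) \in \varphi[P]$ or $\varphi(b) \in \varphi[P]$, which is exactly the defining property of a prime $\varphi$-filter.

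For part~$(b)$: set $J = \varphi^{-1}[\mathscr{F}]$. By Theorem~\ref{SITARAM8}(b), $J$ is an ideal, and by the remark after Definition~\ref{SITARAM11} it is a $\varphi$-ideal; it is proper since $\emptyset \notin \mathscr{F}$. To see $J$ is prime, suppose $ab \in J$, i.e.\ $\varphi(ab) \in \mathscr{F}$. Condition~$(i)$ rewrites this as $\varphi(a) \cup \varphi(b) \in \mathscr{F}$, and since $\mathscr{F}$ is a prime $\varphi$-filter we get $\varphi(a) \in \mathscr{F}$ or $\varphi(b) \in \mathscr{F}$, that is, $a \in J$ or $b \in J$. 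Thus $J$ is a prime $\varphi$-ideal.

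Both arguments are short and formal, so I do not anticipate a real obstacle. The one point worth stating carefully is that in part~$(a)$ the $\varphi$-ideal hypothesis on $P$ is precisely what is needed to pass from ``$\varphi(ab) = \varphi(p)$ for some $p \in P$'' to ``$ab \in P$''; dropping it would break the implication. Symmetrically, in part~$(b)$ no extra hypothesis is required because preimages of $\varphi$-filters are automatically $\varphi$-ideals, so the prime $\varphi$-filter condition transfers directly to primeness of the ideal.
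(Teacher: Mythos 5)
Your proposal is correct and follows essentially the same route as the paper: in both parts the only thing to check is the primeness clause, and the key step is translating $\varphi(ab)=\varphi(a)\cup\varphi(b)$ back and forth between membership of $ab$ in the ideal and membership of $\varphi(a)\cup\varphi(b)$ in the filter, using the $\varphi$-ideal property of $P$ (equivalently $P=\varphi^{-1}[\varphi[P]]$) in part $(a)$ exactly as the paper does. No gaps.
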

\begin{proof}
$(a)$. Let $\varphi(a) \cup \varphi(b) \in \varphi[P]$, then $ab \in \varphi^{-1}[\varphi[P]] = P$. This implies that either $a \in P$ or $b \in P$, that is, either $\varphi(a) \in \varphi[P]$ or $\varphi(b) \in \varphi[P]$.\\
$(b)$. It is clear that $P = \varphi^{-1}[\mathscr{F}]$ is an $\varphi$-ideal. Suppose now that $ab \in P$. Then $\varphi(a) \cup \varphi(b) \in \mathscr{F}$. This implies that either $\varphi(a) \in \mathscr{F}$ or $\varphi(b) \in \mathscr{F}$, and so either $a \in P$ or $b \in P$.
\end{proof}

Recall that a ring for which there is a fixed integer $n \geq 2$ such that every element in the ring has an $n$-th root in the ring is called an $n$-th root ring.

\begin{theorem}\label{SITARAM15}
If $A_{S}$ is an $n$-th root ring, then $IJ = I \cap J$, for all $\varphi$-ideals $I, J$. 
\end{theorem}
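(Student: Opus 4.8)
The plan is to prove the inclusion $I \cap J \subseteq IJ$ directly, since the reverse inclusion $IJ \subseteq I \cap J$ holds in any commutative ring. So fix an element $a \in I \cap J$. The key idea is to exploit the $n$-th root hypothesis: write $a = b^{n}$ for some $b \in A_{S}$. Now I would use property $(i)$ of the semi-transition map (Definition~\ref{SITARAM}), which gives $\varphi(b) = \varphi(b^{n}) = \varphi(a)$, together with the fact that $I$ and $J$ are $\varphi$-ideals. From $a \in I$ we get $\varphi(b) = \varphi(a) \in \varphi[I]$, hence $b \in I$; likewise $b \in J$. Therefore $a = b^{n} = b \cdot b^{n-1} \in IJ$ (using $n \ge 2$, so that one factor $b$ lies in $I$ and the remaining product $b^{n-1}$ lies in $J$ — indeed $b \in J$ already suffices since $b^{n-1} \in A_{S}$ and $J$ absorbs multiplication, but the cleanest phrasing is $a = b \cdot b^{n-1}$ with $b \in I$ and $b^{n-1} \in J$).

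I expect the argument to be short and essentially mechanical once the $n$-th root is extracted, so there is no serious obstacle. The only point requiring a moment's care is making sure the membership $a \in IJ$ is correctly justified: $IJ$ is the ideal generated by all products $xy$ with $x \in I$, $y \in J$, and here $a$ is literally such a product $b \cdot b^{n-1}$ with $b \in I$ (equivalently $b \in J$), so $a \in IJ$ without needing to pass to sums of products. The hypothesis $n \ge 2$ is used precisely to split $b^{n}$ as a genuine product of two elements, at least one in each ideal.

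In summary, the order of steps is: (1) reduce to showing $I \cap J \subseteq IJ$; (2) take $a \in I \cap J$ and write $a = b^{n}$ using the $n$-th root property of $A_{S}$; (3) apply $\varphi(b) = \varphi(a)$ and the $\varphi$-ideal condition on $I$ and $J$ to conclude $b \in I \cap J$; (4) write $a = b \cdot b^{n-1}$ and conclude $a \in IJ$. Combining with the trivial inclusion yields $IJ = I \cap J$ for all $\varphi$-ideals $I, J$.
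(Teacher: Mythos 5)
Your proposal is correct and follows essentially the same route as the paper: extract the $n$-th root $b$ of $a$, use $\varphi(b)=\varphi(b^{n})=\varphi(a)$ and the $\varphi$-ideal property to place $b$ in both $I$ and $J$, and then factor $a=b\cdot b^{n-1}\in IJ$. Your write-up is in fact slightly more explicit than the paper's about where property $(i)$ of Definition~\ref{SITARAM} and the hypothesis $n\ge 2$ are used.
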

\begin{proof}
Let $I, J$ be $\varphi$-ideals. Let $a \in I \cap J$. Then $b^{n} = a$, for some $b \in A$. Since $b^{n}$ is contained in both the ideals $I$ and $J$, therefore $\varphi(b) \in \varphi[I]$ and $\varphi(b) \in \varphi[J]$, that is, $b \in I$ and $b \in J$. As $b^{n - 1} \in I$ and $b \in J$, we obtain that $b^{n} = (b^{n - 1})(b) \in IJ$, that is, $a \in IJ$. This shows that $I \cap J \subseteq IJ$.
\end{proof}

\section{Structural representation of semiprimitive associated semi-transitional rings}\label{sec:structure}

Recall that a ring $B$ is said to be a semiprimitive ring if $\mathrm{J}(B) = 0$. For each $b \in B$, let  $\mathscr{M}(b) = \{M \in \mathrm{Max}(B) \mid b \in M\}$. For the associated semi-transitional ring $A_{S}$ attached to $A$, the $\varphi$-ultrafilters on $\Lambda$ is denoted by $\Omega_{\varphi}(\Lambda)$ and for each $a \in A_{S}$, let $\overline{\varphi(a)} = \{\mathscr{U} \in \Omega_{\varphi}(\Lambda) \mid \varphi(a) \in \mathscr{U}\}$.

\begin{theorem}\label{SITARAM16}
A semiprimitive ring $B$ is an associated semi-transitional ring if an only if $\mathscr{M}(a) \cap \mathscr{M}(b) = \mathscr{M}(c)$, for all $a, b \in B$ and for some $c \in (a, b)$.
\end{theorem}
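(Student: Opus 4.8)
The plan is to realize $B$, together with the map $a\mapsto\mathscr{M}(a)$ on $\Lambda=\mathrm{Max}(B)$, as (carrying) an associated semi-transitional structure; the displayed condition is then exactly the translation of clause $(iii)$ of Definition~\ref{SITARAM}.

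\emph{The forward implication.} Suppose $B$ is an associated semi-transitional ring, say $B=A_{S}$ with associated semi-transition map $\varphi$. Recall that every maximal ideal of $B$ is a $\varphi$-ideal, so for $a\in B$ and $M\in\mathrm{Max}(B)$ one has $a\in M$ iff $\varphi(a)\in\varphi[M]$. Fix $a,b\in B$ and, using clause $(iii)$, choose $c\in(a,b)$ with $\varphi(c)=\varphi(a)\cap\varphi(b)$. I claim $\mathscr{M}(a)\cap\mathscr{M}(b)=\mathscr{M}(c)$. Since $c\in(a,b)$, any maximal ideal containing both $a$ and $b$ contains $c$, giving $\subseteq$. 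Conversely, if $c\in M$ then $ca\in M$, and by clause $(i)$ and the choice of $c$ we get $\varphi(ca)=\varphi(c)\cup\varphi(a)=\bigl(\varphi(a)\cap\varphi(b)\bigr)\cup\varphi(a)=\varphi(a)$; hence $\varphi(a)\in\varphi[M]$, so $a\in M$, and symmetrically $b\in M$. Thus $\mathscr{M}(a)\cap\mathscr{M}(b)=\mathscr{M}(c)$ with $c\in(a,b)$.

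\emph{The converse.} Assume $B$ is semiprimitive and the displayed condition holds. Put $\Lambda=\mathrm{Max}(B)$ (nonempty, as $B\neq 0$) and define $\varphi\colon B\to\powerset(\Lambda)$ by $\varphi(a)=\mathscr{M}(a)$. Clause $(i)$ holds because a maximal ideal contains $ab$ precisely when it contains $a$ or $b$. Clause $(ii)$: $\varphi(1)=\emptyset$ is clear, while $\varphi(a)=\Lambda$ iff $a$ lies in every maximal ideal iff $a\in\mathrm{J}(B)=0$. For clause $(iii)$, the hypothesis supplies $c\in(a,b)$ with $\varphi(a)\cap\varphi(b)=\mathscr{M}(a)\cap\mathscr{M}(b)=\mathscr{M}(c)=\varphi(c)$, and moreover $\varphi(c)=\mathscr{M}(a)\cap\mathscr{M}(b)\subseteq\mathscr{M}(a+b)=\varphi(a+b)$ because an ideal containing $a$ and $b$ contains $a+b$; if additionally $\varphi(a)\cap\varphi(b)=\emptyset$, then no maximal ideal contains both $a$ and $b$, so $(a,b)=B=(1)$ is principal, and we take $m=n=1$. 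Hence $\varphi$ is a semi-transition map. Finally, $\varphi(u)=\emptyset$ iff $u$ belongs to no maximal ideal iff $u\in B^{\times}$; so with $S=\{u\in B:\varphi(u)=\emptyset\}=B^{\times}$ we have $B_{S}=B$, and therefore $(B,\varphi)$ is the associated semi-transitional ring attached to itself, as required.

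\emph{Main obstacle.} The only step that is not a direct unwinding of definitions is the inclusion $\mathscr{M}(c)\subseteq\mathscr{M}(a)\cap\mathscr{M}(b)$ in the forward direction: deducing $a,b\in M$ from $c\in M$ genuinely uses both that maximal ideals are $\varphi$-ideals and the multiplicativity clause $(i)$ for $\varphi$. Everything else---the verification of clauses $(i)$ and $(ii)$, and the ``principal'' half of $(iii)$ via $(a,b)=B$---is routine, and semiprimitivity enters only to identify $\varphi(a)=\Lambda$ with $a=0$.
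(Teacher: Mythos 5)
Your proof is correct. The converse direction (semiprimitive $B$ with the stated condition is an associated semi-transitional ring) is essentially identical to the paper's: define $\varphi(a)=\mathscr{M}(a)$ on $\Lambda=\mathrm{Max}(B)$, verify the three clauses of Definition~\ref{SITARAM} (with semiprimitivity giving $\varphi(a)=\Lambda\Leftrightarrow a=0$ and the principality half of $(iii)$ coming for free from $(a,b)=B$), and observe $S=B^{\times}$ so $B\cong B_{S}$. For the forward direction your packaging differs slightly from the paper's: the paper first proves the identity $\overline{\varphi(a)}\cap\overline{\varphi(b)}=\overline{\varphi(c)}$ inside the space $\Omega_{\varphi}(\Lambda)$ of $\varphi$-ultrafilters, using upward-closedness of filters, and then translates back to maximal ideals via the bijection of Theorem~\ref{SITARAM9}; you instead argue directly at the level of maximal ideals, using the fact (recorded in the paper after Definition~\ref{SITARAM11}) that every maximal ideal is a $\varphi$-ideal together with the absorption computation $\varphi(ca)=\varphi(c)\cup\varphi(a)=\varphi(a)$. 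Both arguments rest on the same key equivalence $a\in M\Leftrightarrow\varphi(a)\in\varphi[M]$, so the difference is one of presentation rather than substance; your version avoids the ultrafilter detour, while the paper's version sets up the sets $\overline{\varphi(a)}$ that it reuses in Section~\ref{sec:omega}. One cosmetic remark: you correctly flag that $\Lambda=\mathrm{Max}(B)$ must be nonempty, an edge case the paper passes over silently.
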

\begin{proof}
At first, we show $(\Leftarrow)$ direction. Let $\Lambda = \mathrm{Max}(B)$. Define $\psi: B \rightarrow \powerset(\Lambda)$ by $\psi(b) = \mathscr{M}(b)$. It is easy to see that $\psi(ab) = \psi(a) \cup \psi(b)$, for all $a, b \in A$. Since no maximal ideal contains $1$, therefore $\psi(1) = \emptyset$. Further, $\mathscr{M}(a) = \Lambda$, if and only if, $a \in \bigcap_{M \in \Lambda} M$, if and only if, $a = 0$, since $\mathrm{J}(B) = 0$. Now, the hypotheses and the fact that $\mathscr{M}(a) \cap \mathscr{M}(b) \subseteq \mathscr{M}(a + b)$, for all $a, b \in A$, shows that $\psi(a) \cap \psi(b) = \psi(c) \subseteq \psi(a + b)$, for all $a, b \in A$ and for some $c \in (a, b)$. Finally, if $\mathscr{M}(a) \cap \mathscr{M}(b) = \emptyset$, then $(a, b)$ can not be contained in any maximal ideal of $B$, so $(a, b) = (1)$. Hence, we get that if $\psi(a) \cap \psi(b) = \emptyset$, then $(a, b)$ is principal. This shows that $B$ is a semi-transitional ring. Now, if we take $S = \{b \in B \mid \psi(b) = \emptyset\}$, then observe that $S = B^{\times}$. Since $B \cong B_{S}$, therefore we conclude that $B$ is an associated semi-transitional ring.

We now show $(\Rightarrow)$ direction. So, assume that $B := D_{S}$ is the associated semi-transitional ring attached to a semi-transitional ring $D$, where the semi-transition map $\varphi: D_{S} \rightarrow \powerset(\Lambda)$ is associated to the semi-transition map $\psi$ defined on $D$ and $S = \{d \in D \mid \psi(d) = \emptyset\}$. We now show that $\overline{\varphi(a)} \cap \overline{\varphi(b)} = \overline{\varphi(c)}$, for all $a, b \in A_{S}$ and for some $c \in (a, b)$ such that $\varphi(a) \cap \varphi(b) = \varphi(c)$. So, let $\mathscr{U} \in \overline{\varphi(a)} \cap \overline{\varphi(b)}$. This implies that $\varphi(a) \in \mathscr{U}$ and $\varphi(b) \in \mathscr{U}$. Since $\mathscr{U}$ is a $\varphi$-filter on $\Lambda$, therefore we have $\varphi(a) \cap \varphi(b) \in \mathscr{U}$. Since $\varphi(a) \cap \varphi(b) = \varphi(c)$, for some $c \in (a, b)$, so we get that $\varphi(c) \in \mathscr{U}$, that is, $\mathscr{U} \in \overline{\varphi(c)}$. This shows that $\overline{\varphi(a)} \cap \overline{\varphi(b)} \subseteq \overline{\varphi(c)}$. Conversely, suppose that $\mathscr{U} \in \overline{\varphi(c)}$. Then we have that $\varphi(a) \cap \varphi(b) = \varphi(c) \in \mathscr{U}$. Since, $\mathscr{U}$ is a $\varphi$-filter on $\Lambda$, we have that $\varphi(a) \in \mathscr{U}$ and $\varphi(b) \in \mathscr{U}$. Thus, $\mathscr{U} \in \overline{\varphi(a)} \cap \overline{\varphi(b)}$. This shows that $\overline{\varphi(c)} \subseteq \overline{\varphi(a)} \cap \overline{\varphi(b)}$. Observe now that $\varphi(a) \in \mathscr{U}$, if and only if, $a \in M$, where $M \in \mathrm{Max}(A_{S})$ is such that $M = \varphi^{-1}[\mathscr{U}]$. Now, since $\overline{\varphi(a)} \cap \overline{\varphi(b)} = \overline{\varphi(c)}$, for all $a, b \in A_{S}$ and for some $c \in (a, b)$, therefore we get that $\mathscr{M}(a) \cap \mathscr{M}(b) = \mathscr{M}(c)$, for all $a, b \in A_{S}$ and for some $c \in (a, b)$.
\end{proof}

Notice that if we are given any ring $B$, then the proof of Theorem \eqref{SITARAM16} in $(\Rightarrow)$ direction suggests that $B$ can not be an associated semi-transitional ring if $\mathscr{M}(a) \cap \mathscr{M}(b) \neq \mathscr{M}(c)$, for some $a, b \in B$ and for all $c \in (a, b)$. The following corollary is now an immediate consequence of Theorem \eqref{SITARAM16}.

\begin{corollary}\label{SITARAM17}
All semiprimitive B\'ezout rings are associated semi-transitional rings.
\end{corollary}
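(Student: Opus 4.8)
The plan is to invoke Theorem~\ref{SITARAM16} and reduce the claim to the single combinatorial identity $\mathscr{M}(a) \cap \mathscr{M}(b) = \mathscr{M}(c)$ for a suitable $c \in (a,b)$. Since $B$ is assumed semiprimitive, that theorem applies verbatim, so it suffices to exhibit, for each pair $a, b \in B$, an element $c$ of the ideal $(a,b)$ realizing this identity. The Bézout hypothesis is precisely what delivers such a $c$: every finitely generated ideal of $B$ is principal, so I would begin by writing $(a,b) = (c)$ for some $c \in B$, and note that $c \in (c) = (a,b)$, so the membership requirement $c \in (a,b)$ is automatic.

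Next I would verify $\mathscr{M}(c) = \mathscr{M}(a) \cap \mathscr{M}(b)$ by a direct inclusion argument on maximal ideals. For $M \in \mathrm{Max}(B)$, the condition $c \in M$ is equivalent to $(c) \subseteq M$, which is equivalent to $(a,b) \subseteq M$, i.e.\ to $a \in M$ and $b \in M$ simultaneously; hence $M \in \mathscr{M}(c)$ if and only if $M \in \mathscr{M}(a) \cap \mathscr{M}(b)$. This is the only genuine computation in the argument, and it is essentially immediate from the equality of ideals $(c) = (a,b)$.

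With this identity in hand, the hypothesis of the $(\Leftarrow)$ direction of Theorem~\ref{SITARAM16} is satisfied for the semiprimitive ring $B$, so $B$ is an associated semi-transitional ring, which is exactly the assertion of the corollary. I do not anticipate a real obstacle here: the content is entirely carried by Theorem~\ref{SITARAM16} together with the defining property of Bézout rings, and the proof should run to only a few lines. The one point worth stating carefully is that the element $c$ produced by the Bézout condition lies in $(a,b)$, so that it is a legitimate witness in the criterion of Theorem~\ref{SITARAM16}; everything else is formal.
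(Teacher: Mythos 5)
Your proof is correct and follows essentially the same route as the paper: both arguments hinge on the B\'ezout identity $(a,b)=(c)$ and the resulting equality $\mathscr{M}(a)\cap\mathscr{M}(b)=\mathscr{M}(c)$, which feeds into Theorem~\ref{SITARAM16}. The only difference is cosmetic --- the paper re-verifies the semi-transition axioms from Definition~\ref{SITARAM} directly rather than citing the $(\Leftarrow)$ direction of Theorem~\ref{SITARAM16} as a black box, but the mathematical content is identical.
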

\begin{proof}
Let $B$ be a semiprimitive B\'ezout ring and let $\Lambda = \mathrm{Max}(B)$. We have already seen from the proof of Theorem \eqref{SITARAM16} that the map $\psi: B \rightarrow \powerset(\Lambda)$, defined by $\psi(b) = \mathscr{M}(b)$, satisfy the conditions $(i)$ and $(ii)$ of Definition \eqref{SITARAM}. Now, for any $a, b \in B$, we have that $(a, b) = (c)$, for some $c \in B$, since $B$ is a B\'ezout ring. Thus, we get that $\mathscr{M}(a) \cap \mathscr{M}(b) = \mathscr{M}(c)$, where $c \in (a, b)$ is such that $(a, b) = (c)$. As $\mathscr{M}(a) \cap \mathscr{M}(b) \subseteq \mathscr{M}(a + b)$, for all $a, b \in A$, and as $(a^{m}, b^{n})$ is principal, for all $a, b \in B$ and for all $m, n \in \mathbb{N}$, we see that $\psi$ satisfies the condition $(iii)$ of Definition \eqref{SITARAM}. This shows that $B$ is a semi-transitional ring. Now, observe that $S = \{b \in B \mid \psi(b) = \emptyset\} = B^{\times}$. Thus, we obtain that $B \cong B_{S}$. Hence, $B$ is an associated semi-transitional ring.
\end{proof}

\section{Transitional rings}\label{sec:transitional}

\begin{definition}\label{SITARAM18}
Let $\psi: A \rightarrow \powerset(\Lambda)$ be a semi-transition map. Then $A$ is said to be a transitional ring if $A$ is a pm-ring and $\psi$ satisfies the following:\\
$(i)$ if $\lambda_{1} \neq \lambda_{2}$ in $\Lambda$, then there exists $a \in A$ such that $\psi(a)$ contains exactly one of them; and \\
$(ii)$ for some $\lambda \in \Lambda$ and for some $a \in A$, if $\lambda \notin \psi(a)$, then there exists $b \in A$ such that $\lambda \in \psi(b)$ and $\psi(a) \cap \psi(b) = \emptyset$.\\
Any semi-transition map $\psi$ defined on a pm-ring $A$ with these two properties is called a transition map on $A$.
\end{definition}

Throughout this section, we assume $\psi: A \rightarrow \powerset(\Lambda)$ is a transition map and $A_{S} := S^{-1}A$, where $S= \{a \in A \mid \psi(a) = \emptyset\}$. In order to give examples of transitional rings, it is important to look at the following result.

\begin{theorem}\label{SITARAM19}
If $A$ is a transitional ring, then so is $A_{S}$.
\end{theorem}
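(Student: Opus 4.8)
The plan is to leverage the two facts already at hand about the localization. By Theorem \eqref{SITARAM4}, $A_{S}$ is a semi-transitional ring with associated semi-transition map $\varphi\colon A_{S}\to\powerset(\Lambda)$ given by $\varphi(a/t)=\psi(a)$; and by Theorem \eqref{SITARAM4.5}$(a)$, since $A$ is a pm-ring, $A_{S}$ is a pm-ring as well. Hence, to conclude that $A_{S}$ is transitional, it suffices to check that $\varphi$ satisfies conditions $(i)$ and $(ii)$ of Definition \eqref{SITARAM18}. The single observation that makes both checks routine is that $\varphi$ and $\psi$ have the same image in $\powerset(\Lambda)$: every value $\varphi(a/t)$ equals $\psi(a)$, and every $\psi(a)$ equals $\varphi(a/1)$; so passing to the localization neither enlarges nor shrinks the family $\{\psi(a)\mid a\in A\}$, and the separation-type conditions are inherited verbatim.

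First I would verify $(i)$. Given $\lambda_{1}\neq\lambda_{2}$ in $\Lambda$, the transition property $(i)$ of $\psi$ on $A$ yields some $a\in A$ such that $\psi(a)$ contains exactly one of $\lambda_{1},\lambda_{2}$; then $a/1\in A_{S}$ satisfies $\varphi(a/1)=\psi(a)$, so $\varphi(a/1)$ contains exactly one of $\lambda_{1},\lambda_{2}$, as required.

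Next I would verify $(ii)$. Suppose $\alpha=a/t\in A_{S}$ and $\lambda\in\Lambda$ with $\lambda\notin\varphi(\alpha)=\psi(a)$. Applying the transition property $(ii)$ of $\psi$ to the element $a$ and the point $\lambda$ gives $b\in A$ with $\lambda\in\psi(b)$ and $\psi(a)\cap\psi(b)=\emptyset$. Taking $\beta=b/1\in A_{S}$, we obtain $\lambda\in\varphi(\beta)=\psi(b)$ and $\varphi(\alpha)\cap\varphi(\beta)=\psi(a)\cap\psi(b)=\emptyset$, which is exactly what $(ii)$ demands for $A_{S}$.

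There is essentially no obstacle here: the argument is a direct transport of the two transition conditions from $A$ to $A_{S}$ along the identity $\varphi(a/t)=\psi(a)$, and the only genuinely external input is the preservation of the pm property, which is already recorded in Theorem \eqref{SITARAM4.5}$(a)$. The one point worth stating explicitly in the write-up is the coincidence of the images of $\psi$ and $\varphi$, which legitimizes replacing an arbitrary element $a/t$ of $A_{S}$ by $a/1$ without changing the set $\varphi(a/t)$.
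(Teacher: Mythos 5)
Your proposal is correct and follows exactly the paper's route: invoke Theorem \eqref{SITARAM4} and Theorem \eqref{SITARAM4.5}$(a)$ for the semi-transitional and pm properties of $A_{S}$, then transport conditions $(i)$ and $(ii)$ of Definition \eqref{SITARAM18} from $\psi$ to $\varphi$ using $\varphi(a/t)=\psi(a)$. You merely spell out the verification that the paper dismisses as evident, which is a harmless (indeed welcome) elaboration.
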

\begin{proof}
By Theorem \eqref{SITARAM4} and \eqref{SITARAM4.5}, we know that $A_{S}$ is both a semi-transitional ring and a pm-ring. It remains to show that the map $\varphi$, introduced in Theorem \eqref{SITARAM4}, satisfy the conditions $(i)$ and $(ii)$ of Definition \eqref{SITARAM18}. But this is evident since $\psi(a) = \varphi(a)$, for all $a \in A$.
\end{proof}

Since it is clear from Theorem \eqref{SITARAM19} that $A_{S}$ is a transitional ring whenever $A$ is, therefore $A_{S}$ is called the associated transitional ring attached to $A$ and the map $\varphi$ is called the associated transition map. 

\begin{examples}
Consider the ring $C^{\ast}(X)$ introduced in Examples \eqref{SITARAM1}. Also, assume that $X$ is a Tychonoff space. As $C^{\ast}(X)$ is a semi-transitional pm-ring, therefore we need to verify only the conditions $(i)$ and $(ii)$ of Definition \eqref{SITARAM18}. But this is evident since the condition $(i)$ of Definition \eqref{SITARAM18} follows from the complete regularity of $X$ and the condition $(ii)$ of Definition \eqref{SITARAM18} follows from the fact that the sets $\{x \in X \mid f(x) \leq r\}$ and $\{x \in X \mid f(x) \geq r\}$ are zero-sets in $C^{\ast}(X)$ \cite{gj60}, where $f \in C^{\ast}(X)$ and $r \in \mathbb{R}$. Note that applying Theorem \eqref{SITARAM19}, we may conclude that the ring $C(X)$, mentioned in Examples \eqref{SITARAM5}, is also a transitional ring.
 
Now, consider the semi-transitional ring $C$ introduced in Examples \eqref{SITARAM2}. We already know from the demonstration in Examples \eqref{SITARAM5} that $C$ is a pm-ring. If $l, t \in \mathbb{N}$ be such that $l \neq t$, then for the sequence $\{x_{n}\}_{n = 1}^{\infty}$, defined by $x_{m} = 1$, for all $m \in \mathbb{N} - \{l\}$, and $x_{l} = 0$, we have $l \in \psi_{2}(\{x_{n}\}_{n = 1}^{\infty})$ and $t \notin \psi_{2}(\{x_{n}\}_{n = 1}^{\infty})$. This shows that $\psi_{2}$ satisfies the condition $(i)$ of Definition \eqref{SITARAM18}. Now, assume that for some $l \in \mathbb{N}$ and for some $\{x_{n}\}_{n = 1}^{\infty} \in C$, $l \notin \psi_{2}(\{x_{n}\}_{n = 1}^{\infty})$. If we consider the sequence $\{y_{n}\}_{n = 1}^{\infty}$, defined by $y_{m} = 1$, for all $m \in \mathbb{N} - \{l\}$ and $y_{l} = 0$, then observe that $l \in \psi_{2}(\{y_{n}\}_{n = 1}^{\infty})$ and $\psi_{2}(\{x_{n}\}_{n = 1}^{\infty}) \cap \psi_{2}(\{y_{n}\}_{n = 1}^{\infty}) = \emptyset$. This shows that $\psi_{2}$ satisfies the condition $(ii)$ of Definition \eqref{SITARAM18}. Further, note that applying Theorem \eqref{SITARAM19}, we may conclude that $C_{S}$ is also a transitional ring.  
\end{examples}

The question of whether there are infinitely many transitional rings is a natural one, just as it is with semi-transitional rings. The following result affirms the question.

\begin{theorem}\label{SITARAM20}
Let $\{A_{\alpha}\}_{\alpha \in F}$ be a nonempty indexed family of pm-rings and let $\psi_{\alpha}: A_{\alpha} \rightarrow \powerset(\Lambda_{\alpha})$ be a transition map for each $\alpha \in F$. Suppose there exists a fixed positive integer $m$ such that for every $\alpha \in F$, $\psi_{\alpha}(a_{\alpha}) \cap \psi_{\alpha}(b_{\alpha}) = \emptyset$ implies $(a_{\alpha}^{m}, b_{\alpha}^{m})$ is principal. Then the product ring $\prod_{\alpha \in F} A_{\alpha}$ is a transitional ring.
\end{theorem}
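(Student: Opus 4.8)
The plan is to build the transition structure on $A := \prod_{\alpha \in F} A_{\alpha}$ directly from the component data, reusing the construction from Theorem \eqref{SITARAM3}. First I would invoke that theorem: since each $\psi_{\alpha}$ is in particular a semi-transition map and the uniform bound $m$ on principal powers is assumed, the map $\psi : A \to \powerset(\Lambda)$ with $\Lambda = \bigsqcup_{\alpha \in F} \Lambda_{\alpha}$ and $\psi((a_{\alpha})_{\alpha \in F}) = \bigsqcup_{\alpha \in F} \psi_{\alpha}(a_{\alpha})$ is a semi-transition map on $A$. So it remains to verify that $A$ is a pm-ring and that $\psi$ satisfies conditions $(i)$ and $(ii)$ of Definition \eqref{SITARAM18}.

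For the pm property I would use the characterization theorem $(4.1)$ of \cite{c82} already invoked in the proof of Theorem \eqref{SITARAM4.5}: a ring $R$ is pm if and only if whenever $a + b = 1$ in $R$ there exist $c, d \in R$ with $cd = 0$ and $(a, c) = (b, d) = R$. Given $a = (a_{\alpha})_{\alpha}$, $b = (b_{\alpha})_{\alpha}$ in $A$ with $a + b = 1$, one has $a_{\alpha} + b_{\alpha} = 1$ in each pm-ring $A_{\alpha}$, so I pick $c_{\alpha}, d_{\alpha} \in A_{\alpha}$ with $c_{\alpha}d_{\alpha} = 0$ and $(a_{\alpha}, c_{\alpha}) = (b_{\alpha}, d_{\alpha}) = A_{\alpha}$, and set $c = (c_{\alpha})_{\alpha}$, $d = (d_{\alpha})_{\alpha}$; then $cd = 0$ and $(a, c) = (b, d) = A$, so $A$ is pm. (This is just the standard fact that a product of pm-rings is pm.)

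Condition $(i)$ splits into two cases. If $\lambda_{1}, \lambda_{2} \in \Lambda_{\alpha}$ for the same $\alpha$, I use that $\psi_{\alpha}$ is a transition map to get $a_{\alpha} \in A_{\alpha}$ with $\psi_{\alpha}(a_{\alpha})$ containing exactly one of $\lambda_{1}, \lambda_{2}$, and I take $a \in A$ whose $\alpha$-coordinate is $a_{\alpha}$ and all of whose other coordinates equal $1$; since $\psi_{\beta}(1) = \emptyset$ for $\beta \neq \alpha$, we get $\psi(a) = \psi_{\alpha}(a_{\alpha})$ as a subset of $\Lambda$, which contains exactly one of $\lambda_{1}, \lambda_{2}$. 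If instead $\lambda_{1} \in \Lambda_{\alpha}$ and $\lambda_{2} \in \Lambda_{\beta}$ with $\alpha \neq \beta$, I take $a \in A$ with $\alpha$-coordinate $0$ and all other coordinates $1$, so $\psi(a) = \psi_{\alpha}(0) = \Lambda_{\alpha}$, which contains $\lambda_{1}$ but not $\lambda_{2}$. Condition $(ii)$ is handled along the same lines: given $\lambda \in \Lambda_{\alpha}$ and $a = (a_{\gamma})_{\gamma} \in A$ with $\lambda \notin \psi(a)$, one has $\lambda \notin \psi_{\alpha}(a_{\alpha})$, so the transition property of $\psi_{\alpha}$ yields $b_{\alpha} \in A_{\alpha}$ with $\lambda \in \psi_{\alpha}(b_{\alpha})$ and $\psi_{\alpha}(a_{\alpha}) \cap \psi_{\alpha}(b_{\alpha}) = \emptyset$; taking $b \in A$ with $\alpha$-coordinate $b_{\alpha}$ and all other coordinates $1$ gives $\lambda \in \psi(b)$ and, coordinatewise, $\psi(a) \cap \psi(b) = \psi_{\alpha}(a_{\alpha}) \cap \psi_{\alpha}(b_{\alpha}) = \emptyset$.

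I do not anticipate a genuine obstacle here. The one point needing care is the bookkeeping with the disjoint union: padding the non-$\alpha$ coordinates with $1$ forces the corresponding blocks of $\Lambda$ to be disjoint from $\psi$ of the element constructed, and it is exactly this that makes the "exactly one of $\lambda_{1}, \lambda_{2}$'' statement and the disjointness $\psi(a) \cap \psi(b) = \emptyset$ come out correctly across the full index set $F$. The only mildly non-formal step is the pm-ring reduction, and that is immediate from the $cd = 0$ characterization already used in the paper.
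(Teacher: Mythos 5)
Your proposal is correct and follows essentially the same route as the paper: invoke Theorem \eqref{SITARAM3} for the semi-transition structure on the product, establish the pm property, and verify conditions $(i)$ and $(ii)$ of Definition \eqref{SITARAM18} by padding the off-coordinate entries with $1$ (or $0$ in the two-block case), exactly as the paper does. The only cosmetic difference is that the paper cites theorem $(3.3)$ of \cite{c82} for the fact that a product of pm-rings is pm, whereas you re-derive it coordinatewise from the $cd=0$ characterization of theorem $(4.1)$; both are fine.
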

\begin{proof}
Theorem \eqref{SITARAM3} implies that $A = \prod_{\alpha \in F} A_{\alpha}$ is a semi-transitional ring. Also, theorem $(3.3)$ in \cite{c82} implies that $A$ is a pm-ring. Thus, it remains to show only that the map $\psi: A \rightarrow \powerset(\Lambda)$, defined in Theorem \eqref{SITARAM3}, satisfies the conditions $(i)$ and $(ii)$ of Definition \eqref{SITARAM18}.\\
$(i)$ Suppose that $\lambda_{1} \neq \lambda_{2}$, for some $\lambda_{1}, \lambda_{2} \in \Lambda$. Then we have the following two cases:

\textbf{Case 1:} There exists $\gamma \in F$ such that $\lambda_{1}, \lambda_{2} \in \Lambda_{\gamma}$.

Since $\lambda_{1} \neq \lambda_{2}$, there exists $a_{\gamma} \in A_{\gamma}$ such that $\psi_{\gamma}(a_{\gamma})$ contains exactly one of them. If we now consider the element $a = (a_{\alpha})_{\alpha \in F}$ such that $a_{\alpha} = 1$, for all $\alpha \in F - \{\gamma\}$, and $a_{\alpha} = a_{\gamma}$, for $\alpha = \gamma$, then $\psi(a)$ contains exactly one of them.

\textbf{Case 2:} There exists $\gamma, \delta \in F$ such that $\lambda_{1} \in \Lambda_{\gamma}$ and $\lambda_{2} \in \Lambda_{\delta}$.

In this case, if we consider the element $a = (a_{\alpha})_{\alpha \in F}$ such that $a_{\alpha} = 1$, for all $\alpha \in F - \{\gamma\}$, and $a_{\alpha} = 0$, for $\alpha = \gamma$, then $\psi(a)$ contains exactly one of them, namely $\lambda_{1}$.\\
$(ii)$ Let $a = (a_{\alpha})_{\alpha \in F}$ and suppose that $\lambda \notin \psi(a)$, for some $\lambda \in \Lambda$. Then there exists $\gamma \in F$ such that $\lambda \in \Lambda_{\gamma}$ and $\lambda \notin \psi_{\gamma}(a_{\gamma})$. Hence, there exists $b_{\gamma} \in A_{\gamma}$ such that $\lambda \in \psi_{\gamma}(b_{\gamma})$ and $\psi_{\gamma}(a_{\gamma}) \cap \psi_{\gamma}(b_{\gamma}) = \emptyset$. If we now consider the element $b = (b_{\alpha})_{\alpha \in F}$ in $A$ such that $b_{\alpha} = 1$, for all $\alpha \in F - \{\gamma\}$, and $b_{\alpha} = b_{\gamma}$, for $\alpha = \gamma$, then $\lambda \in \psi(b)$ and $\psi(a) \cap \psi(b) = \emptyset$.
\end{proof}

\section{An important class of transitional rings}\label{sec: ideal conv}

The notion of $I$-convergent sequences of reals was introduced by Kostyrko et al. \cite{k01} using the structure of the ideal $I$ of subsets of the set of natural numbers. An ideal $I$ of $\mathbb{N}$ is said to be a nontrivial ideal of $\mathbb{N}$ if $\mathbb{N} \notin I$. Throughout this section, we assume all ideals are nontrivial.

\begin{theorem}\label{SITARAM21}
Let $I$ be an ideal of $\mathbb{N}$ and let $A_{I}$ be the collection of all $I$-convergent sequences of real numbers. Then\\
$(a)$ $A_{I}$ is a commutative ring with unity.\\
$(b)$ An element $(x_{n})_{n \geq 1}$ in $A_{I}$ is a unit if and only if $x_{n} \neq 0$ for all $n \in \mathbb{N}$ and $I-\lim x_{n} \neq 0$.
\end{theorem}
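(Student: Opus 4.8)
The plan is to exhibit $A_{I}$ as a subring of the full product ring $\mathbb{R}^{\mathbb{N}}$, so that associativity, commutativity and distributivity are inherited and only closure under the ring operations needs to be checked, and then to carry out all the estimates against the dual filter $\mathcal{F}_{I} = \{\mathbb{N}\setminus E \mid E \in I\}$ of $I$-large sets.

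For part $(a)$, I would first record that the $I$-limit is unique: if $(x_{n})_{n\geq 1}$ is $I$-convergent to both $L$ and $L'$ with $L \neq L'$, then taking $\varepsilon = |L - L'|/2$ forces $\mathbb{N} = \{n \mid |x_{n}-L| \geq \varepsilon\} \cup \{n \mid |x_{n}-L'| \geq \varepsilon\} \in I$, contradicting $\mathbb{N} \notin I$; hence the symbol $I\text{-}\lim x_{n}$ is well defined, which is needed to even state $(b)$. Next come the stability lemmas, each proved by the same device: given $\varepsilon > 0$, the ``bad set'' for the combined sequence is contained in a finite union of bad sets of the original sequences (with shrunk tolerances), and $I$ is closed under subsets and finite unions. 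For sums this gives $I\text{-}\lim (x_{n}-y_{n}) = L - M$ from $\{n \mid |(x_{n}-y_{n})-(L-M)| \geq \varepsilon\} \subseteq \{n \mid |x_{n}-L| \geq \varepsilon/2\} \cup \{n \mid |y_{n}-M| \geq \varepsilon/2\}$. For products the only subtlety is that an $I$-convergent sequence need not be bounded; but $E := \{n \mid |x_{n}-L| \geq 1\} \in I$ and $|x_{n}| \leq |L|+1$ off $E$, so writing $x_{n}y_{n} - LM = x_{n}(y_{n}-M) + M(x_{n}-L)$ one bounds the bad set of $(x_{n}y_{n})$ by $E$ together with two genuine bad sets of $(x_{n})$ and $(y_{n})$, again an element of $I$. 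Since the constant sequences $0$ and $1$ are $I$-convergent (the bad set being $\emptyset \in I$) and lie in $A_{I}$, the subring criterion shows that $A_{I}$ is a commutative ring with identity.

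For part $(b)$, the forward implication is immediate from the product lemma: if $(x_{n})$ is a unit with inverse $(y_{n}) \in A_{I}$, then $x_{n}y_{n} = 1$ for every $n$ forces $x_{n} \neq 0$ for all $n$, and if $L = I\text{-}\lim x_{n}$ and $M = I\text{-}\lim y_{n}$ then $LM = I\text{-}\lim (x_{n}y_{n}) = 1$, so $L \neq 0$. Conversely, assume $x_{n} \neq 0$ for all $n$ and $L := I\text{-}\lim x_{n} \neq 0$. Put $y_{n} = 1/x_{n}$, a well-defined real sequence; I claim $(y_{n}) \in A_{I}$ with $I\text{-}\lim y_{n} = 1/L$. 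Once more the key is a lower bound for $|x_{n}|$ off an $I$-set: with $E := \{n \mid |x_{n}-L| \geq |L|/2\} \in I$ one has $|x_{n}| \geq |L|/2$ on $\mathbb{N}\setminus E$, hence $|y_{n}-1/L| = |x_{n}-L|/(|x_{n}|\,|L|) \leq 2|x_{n}-L|/|L|^{2}$ there, so $\{n \mid |y_{n}-1/L| \geq \varepsilon\} \subseteq E \cup \{n \mid |x_{n}-L| \geq \varepsilon|L|^{2}/2\} \in I$. Thus $(y_{n}) \in A_{I}$ and $(x_{n})(y_{n})$ is the constant sequence $1$, so $(x_{n})$ is a unit.

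I do not expect a genuine obstacle here; the one point one must not skip is that $I$-convergent sequences are only bounded on an $I$-large set rather than globally, which is exactly what makes the product and reciprocal estimates go through, and the nontriviality of $I$ enters only to guarantee uniqueness of the $I$-limit.
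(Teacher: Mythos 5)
Your proposal is correct and follows essentially the same route as the paper: the forward direction of $(b)$ via pointwise inverses and the product rule, and the converse via the lower bound $|x_{n}|\geq |L|/2$ off an $I$-small set together with the estimate $|1/x_{n}-1/L|\leq 2|x_{n}-L|/|L|^{2}$, which is precisely the paper's $A_{1},A_{2},A_{3},A_{4}$ argument. The only difference is that you supply proofs of the closure-under-sums-and-products lemmas (and uniqueness of the $I$-limit) that the paper simply cites as well known.
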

\begin{proof}
$(a)$ It is well-known that if $I-\lim x_{n} = x$ and $I-\lim y_{n} = y$, then $I-\lim (x_{n} + cy_{n}) = x + cy$, for all $c \in \mathbb{R}$, and $I-\lim (x_{n}y_{n}) = xy$. Further, it is not hard to see that the additive identity of $A_{I}$ is the constant sequence $a_{n} = 0$ for all $n \in \mathbb{N}$ and the multiplicative identity of $A_{I}$ is the constant sequence $b_{n} = 1$ for all $n \in \mathbb{N}$. Finally, as distributive property trivially holds in $A_{I}$, the result follows. \\
$(b)$ Assume that $(x_{n})_{n \geq 1}$ is a unit in $A_{I}$. Then there exists $(y_{n})_{n \geq 1}$ in $A_{I}$ such that $(x_{n}y_{n})_{n \geq 1} = (b_{n})_{n \geq 1}$, where $b_{n} = 1$, for all $n \in \mathbb{N}$. But this implies $x_{n}y_{n} = 1$, for all $n \in \mathbb{N}$. Hence, $x_{n} \neq 0$ for all $n \in \mathbb{N}$. Finally, since we have $(I-\lim x_{n})(I-\lim y_{n}) = I-\lim (x_{n}y_{n}) = 1$, therefore we conclude that $I-\lim x_{n} \neq 0$.

Conversely, assume that $(x_{n})_{n \geq 1} \in A_{I}$ be such that $x_{n} \neq 0$ for all $n \in \mathbb{N}$ and $I-\lim x_{n} = x$, where $x \neq 0$. Let $\epsilon > 0$ be an arbitrary fixed real number and let $F(I)$ be the associated filter of $\mathbb{N}$ corresponding to the ideal $I$. Let us consider the following sets:
\begin{align*}
A_{1} = \{n \in \mathbb{N} \mid |x_{n} - x| \geq \frac{|x|}{2}\}, A_{2} = \{n \in \mathbb{N} \mid |x_{n} - x| \geq \frac{\epsilon|x|^{2}}{2}\},\\
A_{3} = \{n \in \mathbb{N} \mid ||x_{n}| - |x|| \geq \frac{|x|}{2}\}, \text{and } A_{4} = \{n \in \mathbb{N} \mid |\frac{1}{x_{n}} - \frac{1}{x}| \geq \epsilon\}.
\end{align*}
Since $A_{1}^{c} \subseteq A_{3}^{c}$ and since $A_{1}^{c} \in F(I)$, we conclude that $A_{3}^{c} \in F(I)$. Again, as $A_{2}^{c} \in F(I)$, we conclude that $A_{2}^{c} \cap A_{3}^{c} \in F(I)$. Let $n \in A_{2}^{c} \cap A_{3}^{c}$. Now, as $n \in A_{3}^{c}$, we conclude that $\frac{1}{|x_{n}||x|} < \frac{2}{|x|^{2}}$. Further, as $n \in A_{2}^{c}$, we get that $|x_{n} - x| < \frac{\epsilon|x|^{2}}{2}$. Thus, $\frac{|x_{n} - x|}{|x_{n}||x|} < \frac{2}{|x|^{2}} \cdot \frac{\epsilon|x|^{2}}{2}$, that is, $|\frac{1}{x_{n}} - \frac{1}{x}| < \epsilon$. Hence, we get that $n \in A_{2}^{c} \cap A_{3}^{c}$ implies $n \in A_{4}^{c}$, that is, $A_{2}^{c} \cap A_{3}^{c} \subseteq A_{4}^{c}$. As $A_{2}^{c} \cap A_{3}^{c} \in F(I)$, therefore we conclude that $A_{4}^{c} \in F(I)$, that is, $A_{4} \in I$. Hence, $I-\lim \frac{1}{x_{n}} = \frac{1}{x}$, and so $(\frac{1}{x_{n}})_{n \geq 1} \in A_{I}$. This proves that $(x_{n})_{n \geq 1}$ is a unit in $A_{I}$. 
\end{proof}

\begin{lemma}\label{SITARAM22}
Let $I$ be an ideal of $\mathbb{N}$ and let $r > 0$ be a fixed real number. Let $(x_{n})_{n \geq 1}$ be a sequence of reals and let $A(\epsilon) = \{n \in \mathbb{N} \mid |x_{n} - x| \geq \epsilon\}$. If $A(\epsilon) \in I$ for all $0 < \epsilon < r$, then $I-\lim x_{n} = x$.
\end{lemma}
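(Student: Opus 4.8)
The plan is to reduce the case of an arbitrary $\epsilon > 0$ to the hypothesis, which only supplies information for $0 < \epsilon < r$, by exploiting two elementary monotonicity facts: the sets $A(\epsilon)$ shrink as the threshold $\epsilon$ increases, and any ideal of $\mathbb{N}$ is hereditary, i.e.\ closed under passing to subsets.

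First I would unwind the definition: to say $I-\lim x_{n} = x$ means precisely that $A(\epsilon) = \{n \in \mathbb{N} \mid |x_{n} - x| \geq \epsilon\}$ belongs to $I$ for \emph{every} $\epsilon > 0$. For $0 < \epsilon < r$ this is exactly the assumption of the lemma, so those values of $\epsilon$ require no argument.

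Next, fix $\epsilon \geq r$ and choose any $\epsilon_{0}$ with $0 < \epsilon_{0} < r$, say $\epsilon_{0} = r/2$. If $n \in A(\epsilon)$, then $|x_{n} - x| \geq \epsilon \geq r > \epsilon_{0}$, so $n \in A(\epsilon_{0})$; hence $A(\epsilon) \subseteq A(\epsilon_{0})$. By hypothesis $A(\epsilon_{0}) \in I$, and since $I$ is an ideal of $\mathbb{N}$ it is closed under subsets, so $A(\epsilon) \in I$. Combining the two cases, $A(\epsilon) \in I$ for all $\epsilon > 0$, which is exactly the assertion $I-\lim x_{n} = x$.

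There is essentially no obstacle in this argument; the only point deserving a moment's attention is that the reduction from large $\epsilon$ to small $\epsilon$ genuinely uses the hereditariness of the ideal $I$ — without that closure property the inclusion $A(\epsilon) \subseteq A(\epsilon_{0})$ would not let us conclude $A(\epsilon) \in I$.
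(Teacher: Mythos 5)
Your proof is correct and follows essentially the same route as the paper: for $\epsilon \geq r$ you use the inclusion $A(\epsilon) \subseteq A(\epsilon_{0})$ with $\epsilon_{0} < r$ together with the hereditariness of the ideal $I$, and for $\epsilon < r$ you invoke the hypothesis directly. This matches the paper's argument exactly.
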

\begin{proof}
Let $B(\eta) = \{n \in \mathbb{N} \mid |x_{n} - x| \geq \eta\}$, where $\eta \geq r$ be any fixed real number. Since $B(\eta) \subseteq A(\epsilon)$, for all $0 < \epsilon < r$, and since $A(\epsilon) \in I$, for all $0 < \epsilon < r$, we conclude that $B(\eta) \in I$, for all $\eta \geq r$, since $I$ is an ideal. If $\eta < r$, then from the hypotheses, we get that $B(\eta) \in I$. This implies that $B(\eta) \in I$, for all $\eta > 0$, and so $I-\lim x_{n} = x$.
\end{proof}

Recall that a nontrivial ideal $I$ of $\mathbb{N}$ is called an admissible ideal if $I$ contains the ideal $I_{f}$ of all finite subset of $\mathbb{N}$.

\begin{theorem}\label{SITARAM23}
If $I$ is an admissible ideal of $\mathbb{N}$, then $A_{I}$ is a transitional-ring.
\end{theorem}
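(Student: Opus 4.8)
The plan is to equip $A_I$ with a transition map and then verify the pm-condition. Put $\Lambda=\mathbb{N}\cup\{p\}$ with $p\notin\mathbb{N}$ and define $\psi:A_I\to\powerset(\Lambda)$ by letting $\psi\big((x_n)_{n\ge 1}\big)$ consist of all $n\in\mathbb{N}$ with $x_n=0$, together with the extra point $p$ precisely when $I\text{-}\lim x_n=0$. By Theorem~\ref{SITARAM21}(b), $\psi(a)=\emptyset$ holds exactly when $a$ is a unit, so the multiplicative set $S=\{a\in A_I\mid\psi(a)=\emptyset\}$ equals $A_I^{\times}$ and $A_S=A_I$. Conditions $(i)$ and $(ii)$ of Definition~\ref{SITARAM} are immediate from the coordinatewise description of zero-sets and from Theorem~\ref{SITARAM21}(a) (which gives $I\text{-}\lim(x_ny_n)=(I\text{-}\lim x_n)(I\text{-}\lim y_n)$ and additivity of $I\text{-}\lim$); in particular $\psi(1)=\emptyset$, and $\psi(a)=\Lambda$ forces every coordinate of $a$ to vanish. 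For condition $(iii)$ one takes $c=a^2+b^2=a\cdot a+b\cdot b\in(a,b)$: checking coordinatewise and at $p$ (again via Theorem~\ref{SITARAM21}(a)) gives $\psi(a)\cap\psi(b)=\psi(a^2+b^2)\subseteq\psi(a+b)$; and if $\psi(a)\cap\psi(b)=\emptyset$ then $a_n^2+b_n^2>0$ for all $n$ and $I\text{-}\lim(a_n^2+b_n^2)=(I\text{-}\lim a_n)^2+(I\text{-}\lim b_n)^2>0$, so $a^2+b^2$ is a unit by Theorem~\ref{SITARAM21}(b) and hence $(a^2,b^2)=(1)$, settling the remaining clause with $m=n=2$. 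Thus $\psi$ is a semi-transition map on $A_I$.

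Next I would prove that $A_I$ is a pm-ring, which is the technical heart of the argument. I intend to use the criterion of \cite{c82}, Theorem~$(4.1)$, already invoked in the proof of Theorem~\ref{SITARAM4.5}: it suffices to show that whenever $(a,b)=A_I$ there are $c,d\in A_I$ with $cd=0$ and $(a,c)=(b,d)=A_I$. By Theorem~\ref{SITARAM6}(d) (with $\varphi=\psi$, since $A_S=A_I$), $(a,b)=A_I$ means $\psi(a)\cap\psi(b)=\emptyset$, i.e. $a$ and $b$ have no common zero coordinate and their $I$-limits are not both $0$. Then $h:=a^2+b^2$ is a unit (as above), and with $u:=a^2h^{-1}\in A_I$ one has $0\le u_n\le 1$ for all $n$, with $u_n=0$ exactly on $Z(a)$ and $u_n=1$ exactly on $Z(b)$. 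Since $I\text{-}\lim$ is preserved by $1$-Lipschitz maps (so $A_I$ is closed under $t\mapsto t^{+}=\max(t,0)$; cf.\ Lemma~\ref{SITARAM22}), I would set $c:=(1-2u)^{+}$ and $d:=(2u-1)^{+}$. Then $c_nd_n=0$ for every $n$, so $cd=0$; moreover $c$ equals $1$ on $Z(a)$ and $d$ equals $1$ on $Z(b)$, so $Z(a)\cap Z(c)=\emptyset=Z(b)\cap Z(d)$, and a quick look at $I$-limits (if $I\text{-}\lim a=0$ then $I\text{-}\lim u=0$, forcing $I\text{-}\lim c=1\ne 0$, and symmetrically for $b,d$) shows $\psi(a)\cap\psi(c)=\emptyset=\psi(b)\cap\psi(d)$. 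Hence $(a,c)=(b,d)=A_I$ with $cd=0$, and the criterion of \cite{c82} yields that $A_I$ is a pm-ring.

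Finally I would check the two extra conditions of Definition~\ref{SITARAM18}; this is where admissibility of $I$ enters. Note that the only point of $\Lambda$ outside $\mathbb{N}$ is $p$. For $(i)$, given $\lambda_1\ne\lambda_2$ in $\Lambda$ we may assume $\lambda_1=k\in\mathbb{N}$; the sequence $e$ with $e_k=0$ and $e_j=1$ for $j\ne k$ lies in $A_I$ because $\{k\}\in I$ (admissibility), and $\psi(e)=\{k\}$ separates $\lambda_1$ from $\lambda_2$. For $(ii)$, suppose $\lambda\notin\psi(a)$. If $\lambda=k\in\mathbb{N}$ then $a_k\ne 0$, and the same $e$ satisfies $k\in\psi(e)=\{k\}$ and $\psi(a)\cap\psi(e)=\emptyset$. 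If $\lambda=p$ then $x:=I\text{-}\lim a\ne 0$, whence $Z(a)\subseteq\{n\mid |a_n-x|\ge |x|/2\}\in I$, so $Z(a)\in I$; then the characteristic sequence $b$ of $Z(a)$ lies in $A_I$ with $I\text{-}\lim b=0$, giving $p\in\psi(b)$ and $\psi(a)\cap\psi(b)=\emptyset$ since $Z(b)=\mathbb{N}\setminus Z(a)$. Therefore $\psi$ is a transition map on the pm-ring $A_I$, and $A_I$ is a transitional ring.

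The main obstacle is the pm-verification in the second paragraph. Unlike the ring $C$ of convergent sequences treated in Examples~\ref{SITARAM5}, the maximal spectrum of $A_I$ does not in general reduce to the fixed maximal ideals and the kernel of $I\text{-}\lim$: for non-Fréchet $I$ one gets further maximal ideals built from non-principal ultrafilters concentrated on an $I$-small zero-set, so a direct analysis of $\mathrm{Spec}(A_I)$ along the lines used for $C$ would be delicate. The device that sidesteps this is the observation that $A_I$ is a sublattice of $\mathbb{R}^{\mathbb{N}}$ closed under Lipschitz operations, which makes the truncated pair $(1-2u)^{+},(2u-1)^{+}$ available and reduces the pm-property to the single comaximality computation above; the remaining verifications are routine coordinatewise arguments resting on Theorem~\ref{SITARAM21} and the admissibility of $I$.
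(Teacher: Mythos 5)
Your proof is correct, and its skeleton coincides with the paper's: the same map $\psi$ (the paper writes the extra point as $0$ rather than $p$), the same verification of Definition~\ref{SITARAM} with $c=a^{2}+b^{2}$ and $m=n=2$, and the same reduction of the pm-property to Contessa's Theorem~$(4.1)$ via Theorem~\ref{SITARAM6}$(d)$ after observing $S=A_{I}^{\times}$. Where you genuinely diverge is in producing the annihilating pair $c,d$ with $(a,c)=(b,d)=A_{I}$. The paper splits into cases according to whether $Z(a)$, $Z(b)$ are empty and whether the $I$-limits vanish, and in each case builds $c,d$ as scaled indicator sequences of $Z(a)$ and its complement (or of a single index), checking membership in $A_{I}$ by hand via the sets $A_{1},\dots,A_{4}$. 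You instead give a single uniform construction: $u=a^{2}(a^{2}+b^{2})^{-1}$, $c=(1-2u)^{+}$, $d=(2u-1)^{+}$, which is the classical $C(X)$ partition-of-unity trick transplanted to $A_{I}$. This buys you freedom from the case analysis, but at the price of an extra (true and easy, though worth stating as a lemma) closure property: that $A_{I}$ is stable under coordinatewise post-composition with $1$-Lipschitz maps such as $t\mapsto t^{+}$; your parenthetical appeal to Lemma~\ref{SITARAM22} is not quite the right citation for this, so you should justify it directly from $|s^{+}-t^{+}|\le|s-t|$. Your final paragraph is actually more careful than the paper on one point: the paper dismisses conditions $(i)$ and $(ii)$ of Definition~\ref{SITARAM18} with the remark that $C\subseteq A_{I}$, whereas you verify them explicitly, including the case $\lambda=p$ where admissibility and the inclusion $Z(a)\subseteq\{n:|a_{n}-x|\ge|x|/2\}\in I$ are genuinely needed.
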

\begin{proof}
Define a function $Z: A_{I} \rightarrow \powerset(\mathbb{N})$ by $Z((x_{n})_{n \geq 1}) = \{n \in \mathbb{N} \mid x_{n} = 0\}$. Let $N = \mathbb{N} \cup \{p\}$ with $p \notin \mathbb{N}$. Let $f: A_{I} \rightarrow \powerset(N)$ be a function defined by:
\begin{equation*}
f((x_{n})_{n \geq 1}) = \begin{cases} 
Z((x_{n})_{n \geq 1}), &\text{ if } I-\lim x_{n} \neq 0 \\
Z((x_{n})_{n \geq 1}) \cup \{p\}, &\text{ if } I-\lim x_{n} = 0
\end{cases}
\end{equation*}
At first we show that $(A_{I}, N, f)$ is a semi-transitional ring:\\
$(i)$ Let $a = (x_{n})_{n \geq 1} \in A_{I}$ and $b = (y_{n})_{n \geq 1} \in A_{I}$. Then $ab = (x_{n}y_{n})_{n \geq 1}$. It is evident that $Z(ab) = Z(a) \cup Z(b)$. Observe that we have $p \in f(a) \cup f(b)$ if and only if either $I-\lim x_{n} = 0$ or $I-\lim y_{n} = 0$, that is, if and only if $I-\lim (x_{n}y_{n}) = 0$, and this is true if and only if $p \in f(ab)$. This shows that $f(ab) = f(a) \cup f(b)$, for all $a, b \in A_{I}$.\\
$(ii)$ Let $a = (x_{n})_{n \geq 1} \in A_{I}$. If $x_{m} = 1$, for all $m \in \mathbb{N}$, then $I-\lim x_{n} = 1$, and so $f(a) = \emptyset$. Suppose now that $f(a) = N$. This implies that $x_{m} = 0$, for all $m \in \mathbb{N}$ and $I-\lim x_{n} = 0$. Thus, $a = 0$. Conversely, if $a = 0$, then $f(a) = N$. \\
$(iii)$ Let $a = (x_{n})_{n \geq 1} \in A_{I}$ and $b = (y_{n})_{n \geq 1} \in A_{I}$. It is not hard to see that $Z(a) \cap Z(b) = Z(a^{2} + b^{2})$. Observe now that we have $p \in f(a) \cap f(b)$ if and only if $I-\lim x_{n} = 0$ and $I-\lim y_{n} = 0$, and this is true if and only if $I-\lim (x_{n}^{2} + y_{n}^{2}) = 0$. If we take $c = (x_{n}^{2} + y_{n}^{2})_{n \geq 1}$, then it is now clear that $f(a) \cap f(b) = f(c) \subseteq f(a + b)$, where $c \in (a, b)$. If $f(a) \cap f(b) = \emptyset$, then applying Theorem \eqref{SITARAM21}$(b)$ we see that $a^{2} + b^{2} \in (a, b)$ is a unit.

By Theorem \eqref{SITARAM21}$(b)$, the condition $f(a) = \emptyset$ implies that $a$ is a unit in $A_{I}$, therefore we get that $A_{I} \cong (A_{I})_{S}$, where $S = \{a \in A_{I} : f(a) = \emptyset\}$, that is, $A_{I}$ is an associated transitional ring. Hence, throughout the proof we invoke Theorem \eqref{SITARAM6}$(d)$ and theorem $(4.1)$ in \cite{c82} to deduce that $A_{I}$ is a pm-ring. \\
Let $a = (x_{n})_{n \geq 1} \in A_{I}$ and $b = (y_{n})_{n \geq 1} \in A_{I}$ be such that $I-\lim x_{n} = x$ and $I-\lim y_{n} = y$. Let $r \in \mathbb{R}$ be a fixed real number. Assume now that $(a, b) = 1$. Then $f(a) \cap f(b) = \emptyset$. Hence, we have the following two cases and each case has the following sub-cases:
$$
\begin{tabular}{|c|c|c|}
\hline
\textbf{Case 1:} $x \neq 0$ \text{ and } $y \neq 0$ & \textbf{Case 2:} $x \neq 0$ \text{ and } $y = 0$ \\ \hline
\textbf{Sub-cases} &  \textbf{Sub-cases} \\ \hline
$(1)$ $Z(a) = \emptyset$ \text{ and } $Z(b) = \emptyset$ & $(1)$ $Z(a) = \emptyset$ \text{ and } $Z(b) = \emptyset$\\ \hline
$(2)$ $Z(a) = \emptyset$ \text{ and } $Z(b) \neq \emptyset$ & $(2)$ $Z(a) = \emptyset$ \text{ and } $Z(b) \neq \emptyset$ \\ \hline
$(3)$ $Z(a) \neq \emptyset$ \text{ and } $Z(b) = \emptyset$ & $(3)$ $Z(a) \neq \emptyset$ \text{ and } $Z(b) = \emptyset$ \\ \hline
$(4)$ $Z(a) \neq \emptyset$ \text{ and } $Z(b) \neq \emptyset$ & $(4)$ $Z(a) \neq \emptyset$ \text{ and } $Z(b) \neq \emptyset$ \\ \hline
\end{tabular}
$$ 
Observe that the sub-cases $(2)$ and $(3)$ in the case $(1)$ are symmetric, so we only prove the sub-cases $(1)$, $(2)$, and $(4)$ in the case $(1)$.

\textbf{Sub-case $(1)$ of Case $1$ and Sub-case $(1)$ of Case 2:} \\
Define sequences $c = (c_{n})_{n \geq 1}$ and $d = (d_{n})_{n \geq 1}$ as follows: $c_{1} = r$  and $c_{n} = 0$ for all $n \in \mathbb{N}$ such that $n \neq 1$; and $d_{1} = 0$ and $d_{n} = r$ for all $n \in \mathbb{N}$ such that $n \neq 1$. As $c$ and $d$ are ordinary convergent sequences converge to $0$ and $r$ respectively, and as $I$ is an admissible ideal, we get that $c, d \in A_{I}$. It is clear that $cd = 0$. Since $a$ is a unit, we get that $(a, c) = A_{I}$. Further, as $f(b) \cap f(d) = \emptyset$, therefore we get that $(b, d) = A_{I}$. \\
\textbf{Sub-case $(2)$ of Case $1$ and Sub-case $(2)$ of Case 2:} \\
Let $m \in \mathbb{N} - Z(b)$. Define sequences $c = (c_{n})_{n \geq 1}$ and $d = (d_{n})_{n \geq 1}$ as follows: $c_{m} = r$  and $c_{n} = 0$ for all $n \in \mathbb{N}$ such that $n \neq m$; and $d_{m} = 0$ and $d_{n} = r$ for all $n \in \mathbb{N}$ such that $n \neq m$. As $c$ and $d$ are ordinary convergent sequences converge to $0$ and $r$ respectively, and as $I$ is an admissible ideal, we get that $c, d \in A_{I}$. Since $a$ is a unit, so we get that $(a, c) = A_{I}$. Now, as $f(b) \cap f(d) = \emptyset$, therefore we have that $(b, d) = A_{I}$. \\
\textbf{Sub-case $(4)$ of Case $1$ and Sub-cases $(3), (4)$ of Case $2$:} \\
Define sequences $c = (c_{n})_{n \geq 1}$ and $d = (d_{n})_{n \geq 1}$ as follows: $c_{n} = 0$ if $x_{n} \neq 0$  and $c_{n} = r$ if $x_{n} = 0$; and $d_{n} = 0$ if $c_{n} \neq 0$ and $d_{n} = r$ if $c_{n} = 0$. Now we show that both $c$ and $d$ are elements of $A_{I}$. So suppose that $\epsilon > 0$ be any but fixed real number such that $\epsilon < \min\{|x|, |y|\}$. Let us now consider the sets
\begin{align*}
A_{1} = \{n \in \mathbb{N} \mid |x_{n} - x| \geq \epsilon\}, A_{2} = \{n \in \mathbb{N} \mid |y_{n} - y| \geq \epsilon\},\\
A_{3} = \{n \in \mathbb{N} \mid |c_{n}| \geq \epsilon\}, A_{4} = \{n \in \mathbb{N} \mid |d_{n} - r| \geq \epsilon\}.
\end{align*} 
Observe that if $n \in A_{1}^{c}$, then $x_{n} \neq 0$, and this implies that $c_{n} = 0$, that is, $n \in A_{3}^{c}$. This shows that $A_{1}^{c} \subseteq A_{3}^{c}$, that is, $A_{3} \subseteq A_{1}$. As $A_{1} \in I$, we get that $A_{3} \in I$, that is, $I-\lim c_{n} = 0$. Thus, $c \in A_{I}$. Similarly, observe that if $n \in A_{1}^{c}$, then $x_{n} \neq 0$, and this implies that $c_{n} = 0$, but this further implies that $d_{n} = r$, that is, $n \in A_{4}^{c}$. This shows that $A_{1}^{c} \subseteq A_{4}^{c}$, that is, $A_{4} \subseteq A_{1}$. As $A_{1} \in I$, we get that $A_{4} \in I$, that is, $I-\lim d_{n} = r$. Thus, $d \in A_{I}$.

It is clear that $cd = 0$ and $f(a) \cap f(c) = \emptyset$. As $f(b) \subseteq N - f(a) \subseteq f(c)$, we get that $f(b) \cap f(d) = \emptyset$, since $f(c) \cap f(d) = \emptyset$. Thus, $(a, c) = (b, d) = A_{I}$.

Finally, since $C \subseteq A_{I}$, where $C$ is the ring of real-valued convergent sequences of reals, it is not hard to conclude that $f$ satisfy the conditions $(i)$ and $(ii)$ of Definition \eqref{SITARAM18}.
\end{proof}

\begin{remark}\label{BM}
In Theorem \eqref{SITARAM23}, if we take $I$ as $I_{d}$ , the ideal consisting of all the subsets of $\mathbb{N}$ having natural density $0$, then Theorem \eqref{SITARAM23} implies that the ring $A_{I_{d}}$ of all statistical convergent sequences of reals is a transitional-ring. Observe that the ring $C$ of all convergent sequences of reals, which we proved to be a pm-ring in Examples \eqref{SITARAM5}, also follows directly from  Theorem \eqref{SITARAM23} by taking $I$ as $I_{f}$, the ideal consisting of all the finite subsets of $\mathbb{N}$. In this case, $C = A_{I_{f}}$. Now, the fact that $A_{I}$ is a pm-ring for any admissible ideal $I$, which we proved in Theorem \eqref{SITARAM23}, follows directly from corollary $(3)$ in \cite{m73}. Nevertheless, we provide direct proof here for completeness and to illustrate our method. 
\end{remark}

\section{\texorpdfstring{$\Omega_{f}$}{Omega}-compactification}\label{sec:omega}

Throughout this section, let $f: A \rightarrow \powerset(\Lambda)$ denote an associated transition map for some associated transitional ring $A$, where $A \cong D_{S}$ for some transitional ring $D$.

\begin{theorem}
Let $f: A \rightarrow \powerset(\Lambda)$ be an associated transition map. Then\\
$(a)$ the set $E_{\lambda} = \{f(a) \in f(A) \mid \lambda \in f(a)\}$ is an $f$-ultrafilter.\\
$(b)$ there is a one-to-one correspondence between $\Lambda$ and the set $\{E_{\lambda} \mid \lambda \in \Lambda\}$.
\end{theorem}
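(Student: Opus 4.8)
The plan is to prove $(1)$ first and then deduce $(2)$ almost for free. For $(1)$, I would show $E_\lambda$ is an $f$-filter and then argue it is maximal. That $E_\lambda$ is nonempty and omits $\emptyset$ is immediate: $f(0) = \Lambda \ni \lambda$ so $f(0) \in E_\lambda$, while no set in $E_\lambda$ can be empty since it contains $\lambda$. For the intersection condition, if $f(a), f(b) \in E_\lambda$, then $\lambda \in f(a) \cap f(b)$, and by condition $(iii)$ of Definition~\ref{SITARAM} we have $f(a) \cap f(b) = f(c)$ for some $c \in (a,b)$, so $f(c) \in f(A)$ and $\lambda \in f(c)$, giving $f(c) \in E_\lambda$. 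Upward closure is trivial: if $f(a) \in E_\lambda$, $f(c) \in f(A)$, and $f(a) \subseteq f(c)$, then $\lambda \in f(a) \subseteq f(c)$, so $f(c) \in E_\lambda$. Thus $E_\lambda$ is an $f$-filter.

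For maximality, I would use Theorem~\ref{SITARAM10}$(b)$: an $f$-filter $\mathscr{A}$ is an $f$-ultrafilter once we show that whenever $f(a)$ meets every member of $\mathscr{A}$, then $f(a) \in \mathscr{A}$. Actually it is cleaner to argue directly: suppose $f(a) \notin E_\lambda$, i.e. $\lambda \notin f(a)$. Since $A$ is a transitional ring (being an associated transitional ring, so $D_S$ for a transitional $D$, and hence transitional by Theorem~\ref{SITARAM19}), condition $(ii)$ of Definition~\ref{SITARAM18} gives $b \in A$ with $\lambda \in f(b)$ and $f(a) \cap f(b) = \emptyset$. Then $f(b) \in E_\lambda$ but $f(a) \cap f(b) = \emptyset \notin$ any $f$-filter, so $E_\lambda \cup \{f(a)\}$ does not even have the finite intersection property and cannot be contained in any $f$-filter. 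Hence $E_\lambda$ is maximal among $f$-filters, i.e. an $f$-ultrafilter. The main obstacle here is really just making sure the right hypothesis (transition map property $(ii)$, which is exactly what separates transitional from merely semi-transitional rings) is invoked at the maximality step; everything else is bookkeeping.

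For $(2)$, the correspondence $\lambda \mapsto E_\lambda$ is surjective onto $\{E_\lambda \mid \lambda \in \Lambda\}$ by definition, so I only need injectivity: if $\lambda_1 \neq \lambda_2$, then $E_{\lambda_1} \neq E_{\lambda_2}$. By condition $(i)$ of Definition~\ref{SITARAM18}, there exists $a \in A$ such that $f(a)$ contains exactly one of $\lambda_1, \lambda_2$; say $\lambda_1 \in f(a)$ and $\lambda_2 \notin f(a)$. Then $f(a) \in E_{\lambda_1}$ but $f(a) \notin E_{\lambda_2}$, so the two ultrafilters differ. This establishes the one-to-one correspondence between $\Lambda$ and $\{E_\lambda \mid \lambda \in \Lambda\}$, and completes the proof.
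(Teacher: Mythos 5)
Your proposal is correct and follows essentially the same route as the paper: the maximality of $E_{\lambda}$ is obtained in both cases from condition $(ii)$ of Definition~\ref{SITARAM18} (producing, for any $f(a)$ with $\lambda \notin f(a)$, a member of $E_{\lambda}$ disjoint from $f(a)$), and part $(2)$ uses condition $(i)$ of that definition exactly as in the paper. The only difference is that you explicitly verify the $f$-filter axioms for $E_{\lambda}$, which the paper leaves implicit; that is a welcome addition rather than a deviation.
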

\begin{proof}
$(a)$. For some $b \in A$, assume that $f(b)$ meets every member of $E_{\lambda}$. If we show that $\lambda \in f(b)$, then we are done. So, assume on the contrary that $\lambda \notin f(b)$. Then there exists $c \in A$ such that $\lambda \in f(c)$ and $(b, c) = A$. Now, Theorem \eqref{SITARAM6}$(d)$ implies that $f(b) \cap f(c) = \emptyset$. As $f(c) \in E_{\lambda}$, we obtain a contradiction. \\
$(b)$. Define $\Psi: \Lambda \rightarrow \{E_{\lambda} \mid \lambda \in \Lambda\}$ by $\Psi(\lambda) = E_{\lambda}$. It is evident that $\Psi$ is surjective. Assume now that $\lambda_{1} \neq \lambda_{2}$. Then there exists $a \in A$ such that $f(a)$ contains exactly one of them. Without loss of generality, assume that $\lambda_{1} \in f(a)$ and $\lambda_{2} \notin f(a)$. This implies that $f(a) \in E_{\lambda_{1}}$ and $f(a) \notin E_{\lambda_{2}}$, that is, $E_{\lambda_{1}} \neq E_{\lambda_{2}}$.
\end{proof}

\begin{remark}
Since the sets $\Lambda$ and $\{E_{\lambda} \mid \lambda \in \Lambda\}$ are equipotent, from now on we identify $E_{\lambda}$ with $\lambda$.
\end{remark}

\begin{theorem}
Let $f$ be an associated transition map on $A$. A necessary and sufficient condition for an ideal $I$ in $A$ to be an $f$-ideal is the following: given $a \in A$, if there exists $b \in I$ such that $a$ belongs to every maximal ideal containing $b$, then $a \in I$, that is, $\mathscr{M}(b) \subseteq \mathscr{M}(a)$ and $b \in I \Rightarrow a \in I$.
\end{theorem}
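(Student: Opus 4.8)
The plan is to read membership in maximal ideals through the language of $f$-ultrafilters, using the bijection of Theorem \eqref{SITARAM9} together with the $f$-ultrafilters $E_{\lambda}=\{f(a)\in f(A)\mid \lambda\in f(a)\}$ introduced just above. Two observations carry the argument. First, every maximal ideal $M$ of $A$ is an $f$-ideal (as noted after Definition \eqref{SITARAM11}, and in any case a consequence of Theorem \eqref{SITARAM6}$(d)$), so that $b\in M$ if and only if $f(b)\in f[M]$. Second, for each $\lambda\in\Lambda$ the family $E_{\lambda}$ is an $f$-ultrafilter, hence $M_{\lambda}:=f^{-1}[E_{\lambda}]$ is a maximal ideal of $A$ by Theorem \eqref{SITARAM9}, and unwinding the definitions gives $\lambda\in f(a)\iff f(a)\in E_{\lambda}\iff a\in M_{\lambda}$; thus $f(a)=\{\lambda\in\Lambda\mid a\in M_{\lambda}\}$ for every $a\in A$. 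From this second observation we obtain the key reduction: if $\mathscr{M}(b)\subseteq\mathscr{M}(a)$, then applying this inclusion to each of the maximal ideals $M_{\lambda}$ shows $\lambda\in f(b)\Rightarrow\lambda\in f(a)$, i.e.\ $f(b)\subseteq f(a)$.

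For the necessity direction, suppose $I$ is an $f$-ideal and let $b\in I$, $a\in A$ satisfy $\mathscr{M}(b)\subseteq\mathscr{M}(a)$. By the reduction above, $f(b)\subseteq f(a)$. Since $b\in I$ we have $f(b)\in f[I]$, and $f[I]$ is an $f$-filter by Theorem \eqref{SITARAM8}$(a)$; condition $(iii)$ of Definition \eqref{SITARAM7} then forces $f(a)\in f[I]$, and since $I$ is an $f$-ideal we conclude $a\in I$, which is exactly the stated property.

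For sufficiency, assume the stated property and let $f(a)\in f[I]$, say $f(a)=f(b)$ with $b\in I$. For any maximal ideal $M$ with $b\in M$ we then have $f(a)=f(b)\in f[M]$, and since $M$ is an $f$-ideal this yields $a\in M$; hence $\mathscr{M}(b)\subseteq\mathscr{M}(a)$. Applying the hypothesis to this pair $a,b$ gives $a\in I$, so $I=f^{-1}[f[I]]$ and $I$ is an $f$-ideal.

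I do not expect a serious obstacle here. The only genuinely load-bearing step is the identity $f(a)=\{\lambda\mid a\in M_{\lambda}\}$, i.e.\ the fact that the value $f(a)$ is recoverable from the maximal spectrum through the ultrafilters $E_{\lambda}$; everything else is routine bookkeeping with Theorems \eqref{SITARAM8} and \eqref{SITARAM9} and Definition \eqref{SITARAM7}. The one point requiring care is keeping straight the direction of the inclusion relating $f[I]$ (which, as an $f$-filter, is closed upward) to the $f$-ideal closure condition on $I$.
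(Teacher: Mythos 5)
Your proposal is correct and follows essentially the same route as the paper: the necessity direction hinges on deducing $f(b)\subseteq f(a)$ from $\mathscr{M}(b)\subseteq\mathscr{M}(a)$ by specializing to the maximal ideals $f^{-1}[E_{\lambda}]$, and the sufficiency direction uses that every maximal ideal is an $f$-ideal, exactly as in the paper's argument. The only cosmetic difference is that you isolate the identity $f(a)=\{\lambda\mid a\in f^{-1}[E_{\lambda}]\}$ as an explicit preliminary reduction, whereas the paper performs the same computation inline.
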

\begin{proof}
At first assume that $I$ is an $f$-ideal. Also, suppose that $a \in A$ is such that $a$ belongs to every maximal ideal containing $b$ for some $b \in I$. We show first that $f(b) \subseteq f(a)$. Let $\lambda \in f(b)$. Then $f(b) \in E_{\lambda}$. This implies that $b \in f^{-1}[E_{\lambda}]$. As $E_{\lambda}$ is an $f$-ultrafilter, therefore $f^{-1}[E_{\lambda}]$ is a maximal ideal containing $b$, and so $a \in f^{-1}[E_{\lambda}]$. This shows that $f(a) \in E_{\lambda}$, that is, $\lambda \in f(a)$. Observe now that since $f(b) \in f[I]$, and since $f[I]$ is an $f$-filter, therefore we get that $f(a) \in f[I]$, that is, $a \in I$.

Conversely, assume that $f(c) \in f[I]$ for some $c \in A$. Then $f(c) = f(b)$ for some $b \in I$. Let $M$ be any maximal ideal containing $b$. Then $f(b) \in f[M]$, and so $f(c) \in f[M]$. As $M$ is an $f$-ideal, therefore we get that $c \in M$. Thus, we obtain that $c \in I$.
\end{proof}

Recall that $\Omega_{f}(\Lambda)$ denotes the collection of all $f$-ultrafilters on $\Lambda$ and for all $a \in A$, $\overline{f(a)} = \{\mathscr{U} \in \Omega_{f}(\Lambda) \mid f(a) \in \mathscr{U}\}$, as introduced in Section \eqref{sec:structure}.

\begin{theorem}
Let $f: A \rightarrow \powerset(\lambda)$ be an associated transition map. Then \\
$(a)$ The collection $\{f(a) \mid a \in A\}$ forms a closed base for some topology on $\Lambda$. \\
$(b)$ $f(a) \subseteq \overline{f(a)}$, for all $a \in A$. \\
$(c)$ $\overline{\Lambda} = \Omega_{f}(\Lambda)$. \\
$(d)$ The collection $\{\overline{f(a)} \mid a \in A\}$ forms a closed base for some topology on $\Omega_{f}(\Lambda)$. \\
$(e)$ $\overline{f(a)} \cap \Lambda = f(a)$, for all $a \in A$. \\
$(f)$ $\overline{f(a)} = Cl_{\Omega_{f}(\Lambda)}(f(a))$, for all $a \in A$. \\
$(g)$ $\overline{f(a)} \cup \overline{f(b)} = \overline{f(ab)}$, for all $a, b \in A$. 
\end{theorem}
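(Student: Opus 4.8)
The plan is to carry out everything under the identification of $\Lambda$ with the set $\{E_{\lambda}\mid\lambda\in\Lambda\}\subseteq\Omega_{f}(\Lambda)$ recorded in the remark above, so that a subset $Y\subseteq\Lambda$ is read inside $\Omega_{f}(\Lambda)$ as $\{E_{\lambda}\mid\lambda\in Y\}$. With this convention, $(1)$ and $(4)$ unwind directly from the definitions of $E_{\lambda}$ and of the bar: for $\lambda\in\Lambda$ one has $f(a)\in E_{\lambda}$ if and only if $\lambda\in f(a)$, hence $E_{\lambda}\in\overline{f(a)}$ exactly when $\lambda\in f(a)$; this yields both $f(a)\subseteq\overline{f(a)}$ and $\overline{f(a)}\cap\Lambda=f(a)$ simultaneously. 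For $(2)$, note $\Lambda=f(0)$ (since $\psi(0)=\Lambda$ and $f$ agrees with $\psi$ on $A$), and $f(0)$ lies in every $f$-ultrafilter $\mathscr{U}$: such a $\mathscr{U}$ is a nonempty $f$-filter, so choosing $f(c)\in\mathscr{U}$ and invoking upward closure (axiom $(iii)$ of Definition~\eqref{SITARAM7}) with $f(c)\subseteq\Lambda=f(0)$ forces $f(0)\in\mathscr{U}$; therefore $\overline{\Lambda}=\overline{f(0)}=\Omega_{f}(\Lambda)$.

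I would then depart from the stated order and establish $(6)$ next, since it is the engine for $(3)$ and $(5)$. As $f(ab)=f(a)\cup f(b)$ and $f$-filters are upward closed, the inclusion $\overline{f(a)}\cup\overline{f(b)}\subseteq\overline{f(ab)}$ is immediate. For the reverse inclusion, take $\mathscr{U}$ with $f(ab)=f(a)\cup f(b)\in\mathscr{U}$ and suppose $f(a)\notin\mathscr{U}$ and $f(b)\notin\mathscr{U}$. By Theorem~\eqref{SITARAM10}$(b)$ in contrapositive form, $f(a)$ fails to meet some $f(c)\in\mathscr{U}$ and $f(b)$ fails to meet some $f(d)\in\mathscr{U}$; then $f(c)\cap f(d)\in\mathscr{U}$ (it equals $f(e)$ for some $e$ by axiom $(iii)$ of Definition~\eqref{SITARAM}, and closure under intersection is axiom $(ii)$ of Definition~\eqref{SITARAM7}), so $f(ab)\cap f(c)\cap f(d)\in\mathscr{U}$; but $(f(a)\cup f(b))\cap f(c)\cap f(d)\subseteq(f(a)\cap f(c))\cup(f(b)\cap f(d))=\emptyset$, contradicting $\emptyset\notin\mathscr{U}$. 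Hence $f(a)\in\mathscr{U}$ or $f(b)\in\mathscr{U}$, i.e.\ $\mathscr{U}\in\overline{f(a)}\cup\overline{f(b)}$. This reverse inclusion --- in effect, the primeness of every $f$-ultrafilter with respect to the operation $\cup$ on $f(A)$ --- is the main obstacle; the rest is formal once it is available.

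Given $(6)$, part $(3)$ follows: the family $\mathcal{B}=\{\overline{f(a)}\mid a\in A\}$ is closed under finite unions by $(6)$, contains $\emptyset=\overline{f(1)}$ (since $f(1)=\emptyset$ lies in no $f$-filter) and the whole space $\Omega_{f}(\Lambda)=\overline{f(0)}$. Consequently the collection of all intersections of members of $\mathcal{B}$ (the empty intersection being $\Omega_{f}(\Lambda)$) contains $\emptyset$ and $\Omega_{f}(\Lambda)$, is trivially closed under arbitrary intersections, and is closed under finite unions because $\bigl(\bigcap_{i}\overline{f(a_i)}\bigr)\cup\bigl(\bigcap_{j}\overline{f(b_j)}\bigr)=\bigcap_{i,j}\overline{f(a_ib_j)}$; so it is the family of closed sets of a topology on $\Omega_{f}(\Lambda)$ for which $\mathcal{B}$ is a closed base.

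Finally, for $(5)$, part $(1)$ gives $f(a)\subseteq\overline{f(a)}$ and $\overline{f(a)}$ is a basic closed set, so $Cl_{\Omega_{f}(\Lambda)}(f(a))\subseteq\overline{f(a)}$. Conversely, by $(3)$ every closed set containing $f(a)$ is an intersection of sets $\overline{f(b)}$, hence $Cl_{\Omega_{f}(\Lambda)}(f(a))=\bigcap\{\overline{f(b)}\mid f(a)\subseteq\overline{f(b)}\}$; and $f(a)\subseteq\overline{f(b)}$ is equivalent, via $(4)$ and the identification, to $f(a)\subseteq f(b)$ in $\powerset(\Lambda)$, which forces $\overline{f(a)}\subseteq\overline{f(b)}$ by upward closure of $f$-filters. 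Taking $b=a$ in the intersection then gives $Cl_{\Omega_{f}(\Lambda)}(f(a))=\overline{f(a)}$. (Combining this with $\Lambda=f(0)$ also recovers the density reading of $(2)$: $Cl_{\Omega_{f}(\Lambda)}(\Lambda)=\overline{f(0)}=\Omega_{f}(\Lambda)$.)
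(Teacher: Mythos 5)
Your proof is correct, but it reorganizes the argument in a way that differs genuinely from the paper's, and in two places it is actually more complete. First, you prove $(6)$ before $(3)$ and $(5)$ and, crucially, you \emph{prove} that every $f$-ultrafilter is a prime $f$-filter (via the contrapositive of Theorem~\ref{SITARAM10}$(b)$ and the disjointness computation $(f(a)\cup f(b))\cap f(c)\cap f(d)\subseteq(f(a)\cap f(c))\cup(f(b)\cap f(d))=\emptyset$); the paper's proof of $(6)$ simply asserts ``as $p$ is a prime $f$-filter'' without justification, so your argument fills a real gap. Second, for $(3)$ you verify closure of $\{\overline{f(a)}\}$ under finite \emph{unions} (from $(6)$) together with membership of $\emptyset=\overline{f(1)}$ and $\Omega_f(\Lambda)=\overline{f(0)}$, which is exactly the condition needed for a base for the closed sets; the paper instead checks closure under finite \emph{intersections} and that the total intersection is empty, which is the open-base condition and does not by itself yield a closed base --- so again your route is the sound one, at the cost of making $(3)$ depend on $(6)$. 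Third, for $(5)$ the paper gives a direct point-set argument (every basic open neighbourhood $\Omega_f(\Lambda)-\overline{f(b)}$ of a point of $\overline{f(a)}$ meets $f(a)$, using the ultrafilter trading property), whereas you argue formally through $Cl(f(a))=\bigcap\{\overline{f(b)}\mid f(a)\subseteq\overline{f(b)}\}$, the identity $\overline{f(b)}\cap\Lambda=f(b)$ from $(4)$, and monotonicity of the bar operator; both are valid, and yours buys independence from the trading property at the price of leaning on the explicit description of the topology from $(3)$. Parts $(1)$, $(2)$ and $(4)$ coincide with the paper's proofs, with your observation $\Lambda=f(0)$ making the paper's one-line justification of $(2)$ precise.
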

\begin{proof}
$(a)$. Since $\cap_{a \in A} f(a) = \emptyset$, and since $f(a) \cap f(b) = f(c)$, for all $a, b \in A$ and for some $c \in (a, b)$, the result follows. \\
$(b)$. Let $\lambda \in f(a)$. Then $f(a) \in E_{\lambda}$ and so $E_{\lambda} \in \overline{f(a)}$, that is, $\lambda \in \overline{f(a)}$. \\
$(c)$. Since every $f$-ultrafilter in $\Omega_{f}(\Lambda)$ contains the set $\Lambda$, therefore $\overline{\Lambda} = \Omega_{f}(\Lambda)$. \\
$(d)$. Observe that $p \in \overline{f(a)} \cap \overline{f(b)}$ if and only if $p \in \overline{f(a)}$ and $p \in \overline{f(b)}$, that is, if and only if $f(a) \in p$ and $f(b) \in p$, and this is true if and only if $f(a) \cap f(b) \in p$, since $p$ is closed under superset. As $f(a) \cap f(b) = f(c)$ for some $c \in A$, therefore we get that $f(a) \cap f(b) \in p$ if and only if $p \in \overline{f(c)} = \overline{f(a) \cap f(b)} \in p$. Also, $\cap_{a \in A} \overline{f(a)} = \emptyset$. Hence, $\{\overline{f(a)} \mid a \in A\}$ forms a closed base for some topology on $\Omega_{f}(\Lambda)$. \\
$(e)$. Clearly, $f(a) \subseteq \overline{f(a)} \cap \Lambda$.
Suppose now that $\lambda \in \overline{f(a)} \cap \Lambda$. Then $f(a) \in E_{\lambda}$, and so $\lambda \in f(a)$. Thus, $\overline{f(a)} \cap \Lambda = f(a)$. \\
$(f)$. As $f(a) \subseteq \overline{f(a)}$, so $Cl_{\Omega_{f}(\Lambda)}(f(a)) \subseteq \overline{f(a)}$. Suppose now that $p \in \overline{f(a)}$. Let $\Omega_{f}(\Lambda) - \overline{f(b)}$ be a basic open set containing $p$. As $f(b) \notin p$ and as $p$ is an $f$-ultrafilter, therefore there exists $c \in A$ such that $f(c) \in p$ and $f(c) \cap f(b) = \emptyset$. As $f(a) \cap f(c) \in p$, let $\lambda \in f(a) \cap f(c)$. Clearly, $\lambda \notin f(b)$. This implies that $\lambda \notin \overline{f(b)}$, and so $\lambda \in \Omega_{f}(\Lambda) - \overline{f(b)}$. As $\lambda \in f(a)$, therefore $(\Omega_{f}(\Lambda) - \overline{f(b)}) \cap f(a) \neq \emptyset$. Thus, $p \in Cl_{\Omega_{f}(\Lambda)}(f(a))$. \\
$(g)$. If $p \in \overline{f(a)} \cup \overline{f(b)}$, then either $f(a) \in p$ or $f(b) \in p$. This implies that $f(ab) = f(a) \cup f(b) \in p$, and so $p \in \overline{f(ab)}$. Thus, $\overline{f(a)} \cup \overline{f(b)} \subseteq \overline{f(ab)}$. Now, if $p \in \overline{f(ab)}$, then $f(a) \cup f(b) \in p$. As $p$ is a prime $f$-filter, therefore either $f(a) \in p$ or $f(b) \in p$, that is, $p \in \overline{f(a)} \cup \overline{f(b)}$. Hence, $\overline{f(ab)} \subseteq \overline{f(a)} \cup \overline{f(b)}$.
\end{proof}

\begin{theorem}\label{prabhu}
Let $f: A \rightarrow \powerset(\Lambda)$ be an associated transition map. Then\\
$(a)$ the space $\Omega_{f}(\Lambda)$ is a compact Hausdorff space. \\
$(b)$ $\mathrm{Max}(A)$ is homeomorphic to $\Omega_{f}(\Lambda)$.
\end{theorem}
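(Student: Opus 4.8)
The plan is to prove $(1)$ directly from the topology constructed in the preceding theorem, using the pm-ring hypothesis for the Hausdorff half, and then to obtain $(2)$ by transporting structure along the bijection $M \mapsto f[M]$ of Theorem~\eqref{SITARAM9}.

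For compactness I would test the basic open sets $\Omega_{f}(\Lambda) \setminus \overline{f(a)}$. If $\{\Omega_{f}(\Lambda) \setminus \overline{f(a)}\}_{a \in T}$ covers $\Omega_{f}(\Lambda)$, then $\bigcap_{a \in T}\overline{f(a)} = \emptyset$; suppose no finite subfamily already has empty intersection. Since $f(a) \cap f(b) = f(c)$ for some $c \in A$ forces $\overline{f(a)} \cap \overline{f(b)} = \overline{f(c)}$, and since $\overline{f(c)} \neq \emptyset$ exactly when $f(c) \neq \emptyset$ (because $f(c) \subseteq \overline{f(c)}$), an induction shows that $\{f(a)\}_{a \in T}$ has the finite intersection property in $\powerset(\Lambda)$. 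As noted after Definition~\eqref{SITARAM7}, such a family is contained in some $f$-ultrafilter $\mathscr{U}$, and then $\mathscr{U} \in \overline{f(a)}$ for every $a \in T$, contradicting $\bigcap_{a \in T}\overline{f(a)} = \emptyset$. Hence $\Omega_{f}(\Lambda)$ is compact.

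The Hausdorff property, which I expect to be the crux, is where the pm-ring hypothesis enters. Given distinct $f$-ultrafilters $\mathscr{U}_{1} \neq \mathscr{U}_{2}$, relabel so that some $f(a) \in \mathscr{U}_{1} \setminus \mathscr{U}_{2}$. By Theorem~\eqref{SITARAM10}, $f(a)$ misses some member $f(b) \in \mathscr{U}_{2}$, so $f(a) \cap f(b) = \emptyset$, whence $(a,b) = A$ by Theorem~\eqref{SITARAM6}$(d)$. Applying Theorem~$(4.1)$ of \cite{c82}, choose $c, d \in A$ with $cd = 0$ and $(a,c) = (b,d) = A$. Then $f(a) \cap f(c) = \emptyset$ together with $f(a) \in \mathscr{U}_{1}$ and $\emptyset \notin \mathscr{U}_{1}$ gives $f(c) \notin \mathscr{U}_{1}$, and symmetrically $f(d) \notin \mathscr{U}_{2}$; while $cd = 0$ gives $f(c) \cup f(d) = f(cd) = f(0) = \Lambda$, so by parts $(6)$ and $(2)$ of the preceding theorem $\overline{f(c)} \cup \overline{f(d)} = \overline{f(cd)} = \overline{\Lambda} = \Omega_{f}(\Lambda)$. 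Consequently $\Omega_{f}(\Lambda) \setminus \overline{f(c)}$ and $\Omega_{f}(\Lambda) \setminus \overline{f(d)}$ are disjoint open sets separating $\mathscr{U}_{1}$ and $\mathscr{U}_{2}$. The real insight is that the pm characterization is precisely what produces a complementary pair $\overline{f(c)}, \overline{f(d)}$ whose union is the whole space.

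For $(2)$, set $\Phi(M) = f[M]$, a bijection $\mathrm{Max}(A) \to \Omega_{f}(\Lambda)$ by Theorem~\eqref{SITARAM9}. Since every maximal ideal is an $f$-ideal, $f(a) \in f[M] \iff a \in M$, so $\Phi^{-1}(\overline{f(a)}) = \mathscr{M}(a)$ for each $a \in A$; as $\{\overline{f(a)} \mid a \in A\}$ is a closed base for $\Omega_{f}(\Lambda)$ and each $\mathscr{M}(a)$ is Zariski-closed, $\Phi$ is continuous. Conversely $\Phi(\mathscr{M}(a)) = \overline{f(a)}$ is closed, and since $\{\mathscr{M}(a) \mid a \in A\}$ is a closed base for $\mathrm{Max}(A)$ and $\Phi$ is bijective, $\Phi$ carries closed sets to closed sets; hence $\Phi$ is a homeomorphism. (Alternatively, one checks $\mathrm{J}(A) = 0$ — an element lying in every maximal ideal has $f(a)$ in every $E_{\lambda}$, forcing $f(a) = \Lambda$, i.e. $a = 0$ — so that $\mathrm{Max}(A)$ is already compact Hausdorff as a semiprimitive pm-ring, and the continuous bijection $\Phi$ onto $\Omega_{f}(\Lambda)$ is automatically a homeomorphism, which recovers $(1)$ as well.)
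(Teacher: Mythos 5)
Your proposal is correct and takes essentially the same route as the paper's own proof: the finite intersection property of the basic closed sets is transferred to the family $\{f(a)\}$ and extended to an $f$-ultrafilter for compactness; the pm-ring decomposition $cd=0$ with $(a,c)=(b,d)=A$ yields the disjoint basic open sets for the Hausdorff separation; and the bijection $M \mapsto f[M]$ carrying $\mathscr{M}(a)$ to $\overline{f(a)}$ gives the homeomorphism. You merely make explicit some steps the paper leaves implicit, namely the appeal to Theorem \eqref{SITARAM10} for producing the disjoint pair $f(a), f(b)$ and the citation of theorem $(4.1)$ in \cite{c82}.
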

\begin{proof}
$(a)$. Let $\{\overline{f(a)} \mid a \in F\}$ be a family of basic closed sets having the finite intersection property. Let $a_{1}, a_{2}, \ldots, a_{n}$ be any finite number of elements of $F$. Since $\emptyset \neq \overline{f(a_{1})} \cap \overline{f(a_{2})} \cap \cdots \cap \overline{f(a_{n})} = \overline{f(a_{1}) \cap f(a_{2}) \cap \cdots \cap f(a_{n})}$, therefore $f(a_{1}) \cap f(a_{2}) \cap \cdots \cap f(a_{n}) \neq \emptyset$ any finite number of elements of $F$. This implies that the family $S = \{f(a) \mid a \in F\}$ also has the finite intersection property. Hence, $S$ can be extended to an $f$-ultrafilter $p$ on $\Lambda$. This implies that $f(a) \in p$ for all $a \in F$, that is, $p \in \overline{f(a)}$ for all $a \in F$. Thus, $p \in \bigcap_{a \in F} \overline{f(a)} \neq \emptyset$.

We now show that $\Omega_{f}(\Lambda)$ is a Hausdorff space. Let $p$ and $q$ be two distinct $f$-ultrafilters on $\Lambda$. As $p \neq q$, there exists $f(a) \in p$ and $f(b) \in q$ such that $f(a) \cap f(b) = \emptyset$. Thus, there exists $c \in Ann(d)$ such that $(a, c) = (b, d) = A$, that is, $f(c) \cup f(d) = \Lambda$ and $f(a) \cap f(c) = f(b) \cap f(d) = \emptyset$. As $f(c) \notin p$, we obtain that $p \notin \overline{f(c)}$, that is, $p \in \Omega_{f}(\Lambda) - \overline{f(c)}$. Similarly, as $f(d) \notin q$, we obtain that $q \in \Omega_{f}(\Lambda) - \overline{f(d)}$. As $(\Omega_{f}(\Lambda) - \overline{f(c)}) \cap (\Omega_{f}(\Lambda) - \overline{f(d)}) = \Omega({\Lambda}) - (\overline{f(c)} \cup \overline{f(d)})$, and as $\overline{f(c)} \cup \overline{f(d)} = \overline{\Lambda}$, we get that $(\Omega_{f}(\Lambda) - \overline{f(c)}) \cap (\Omega_{f}(\Lambda) - \overline{f(d)}) = \emptyset$.\\
$(b)$. Define $\varphi: \mathrm{Max}(A) \rightarrow \Omega_{f}(\Lambda)$ by $\varphi(M) = f(M)$. It is evident that $\varphi$ is a bijection. Let $\mathscr{M}(a) = \{M \in \mathrm{Max}(A) \mid a \in M\}$ be a basic closed set in $\mathrm{Max}(A)$. As $\varphi(\mathscr{M}(a)) = \overline{f(a)}$, we conclude that $\varphi$ is a homeomorphism. 
\end{proof}

\begin{example}\label{nkb89}
Let $\mathbb{N}^{\ast}$ be the one point compactification of $\mathbb{N}$. Consider the associated transition-map $f: C \rightarrow \powerset(N)$, where $C$ is the ring of all convergent sequences of reals and $N = \mathbb{N} \cup \{p\}$ with $p \notin \mathbb{N}$, defined by 
$$
f(g) = 
\begin{cases} Z(g) & \text{if } \lim_{n \to \infty} g(n) \ne 0 \\ Z(g) \cup \{p\} & \text{if } \lim_{n \to \infty} g(n) = 0 
\end{cases}
$$
As we already mentioned in Remark \eqref{BM}, this is the special case of Theorem \eqref{SITARAM23}, since $C = A_{I_{f}}$.  Let $k$ be a fixed natural number and let $g = (x_{n})_{n \geq 1}$ be a sequence in $C$ such that $x_{m} = 0$, for all $m \in \mathbb{N} - \{k\}$, and $x_{k} = 1$. As $N - f(g) = \{k\}$, we see that $\{k\}$ is open, for all $k \in \mathbb{N}$. If $\{p\}$ is open, then there exists $(y_{n})_{n \geq 1} = h \in C$ such that $p \in N - f(h) \subseteq \{p\}$, that is, $N - f(h) = \{p\}$. This implies that $f(h) = \mathbb{N}$, that is, $\lim_{n \to \infty} y_{n} = 0$. But this implies that $f(h) = N$, a contradiction. This shows that $\{p\}$ is not open, and so $p$ is the unique limit point of $N$. Suppose now that $U$ be an arbitrary open set containing the point $p$. Then there exists $(z_{n})_{n \geq 1} = t \in C$ such that $p \in N - f(t) \subseteq U$, and so $\lim_{n \to \infty} z_{n} \neq 0$. But this implies that $Z(t)$ is a finite set, and so $f(t)$ is a finite set. This shows that every neighborhood of $p$ is co-finite. Thus, $N$ is compact, and so $\Omega_{f}(N) \cong N \cong \mathbb{N}^{\ast}$.
\end{example}

\begin{theorem}\label{ramramram}
Let $f: A \rightarrow \powerset(\Lambda)$ and $g: B \rightarrow \powerset(K)$ be associated transition maps such that $K$ is compact with respect to the topology which is generated by the closed base $\{g(b) \mid b \in B\}$. For any continuous function $\sigma: \Lambda \rightarrow K$, if there exists a ring homomorphism $\varphi: B \rightarrow A$ such that the first diagram in \eqref{star} commutes, where $\sigma^{\ast}$ is defined by $S \mapsto \sigma^{-1}(S)$ for all $S \in g(B)$, then $\sigma$ has a unique continuous extension to $\Omega_{f}(\Lambda)$, that is, there exists a unique continuous function $\overline{\sigma}: \Omega_{f}(\Lambda) \rightarrow K$ such that the second diagram in \eqref{star} also commutes.
\begin{equation}\label{star}
\begin{tikzcd}
B && {g(B)} &&& {\Omega_{f}(\Lambda)} \\
\\
A && {f(A)} && \Lambda && K
\arrow["g", from=1-1, to=1-3]
\arrow["\varphi"', from=1-1, to=3-1]
\arrow["{\sigma^{\ast}}", from=1-3, to=3-3]
\arrow["{\overline{\sigma}}", dashed, from=1-6, to=3-7]
\arrow["f"', from=3-1, to=3-3]
\arrow["i", from=3-5, to=1-6]
\arrow["\sigma"', from=3-5, to=3-7]
\end{tikzcd}
\end{equation}
\end{theorem}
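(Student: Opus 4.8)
The plan is to build $\overline{\sigma}$ on the level of maximal spectra, transporting everything through the homeomorphism $\mathrm{Max}(A)\cong\Omega_{f}(\Lambda)$ of Theorem \ref{prabhu}(2) and a corresponding identification of $K$ with $\mathrm{Max}(B)$. First I would record the two identifications. Theorem \ref{prabhu} applied to $f$ gives that $\Omega_{f}(\Lambda)$ is compact Hausdorff and homeomorphic to $\mathrm{Max}(A)$ via $M\mapsto f[M]$, with inverse $p\mapsto f^{-1}[p]$. Applied to $g$ it gives that $\Omega_{g}(K)$ is compact Hausdorff and homeomorphic to $\mathrm{Max}(B)$. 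Moreover $k\mapsto E^{g}_{k}:=\{g(b)\mid k\in g(b)\}$ embeds $K$ densely into $\Omega_{g}(K)$, and since $\overline{g(b)}\cap K=g(b)$ the subspace topology $K$ inherits is precisely the closed-base topology generated by $\{g(b)\mid b\in B\}$; as $K$ is assumed compact in that topology, a compact-subset-of-a-Hausdorff-space-is-closed argument (together with density) forces $K=\Omega_{g}(K)$. Chaining these, $K\cong\mathrm{Max}(B)$, with $k\in K$ corresponding to the maximal ideal $N_{k}:=\{b\in B\mid k\in g(b)\}$.

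Next I would define the extension. The ring homomorphism $\varphi\colon B\to A$ induces the continuous contraction $\varphi^{\ast}\colon\mathrm{Spec}(A)\to\mathrm{Spec}(B)$, $\mathfrak{p}\mapsto\varphi^{-1}(\mathfrak{p})$, and since $B$ is a pm-ring the reduction map $r\colon\mathrm{Spec}(B)\to\mathrm{Max}(B)$ (sending a prime to the unique maximal ideal above it) is continuous by the theory of pm-rings \cite{mo71}. I would then declare $\overline{\sigma}$ to be the composite $\Omega_{f}(\Lambda)\cong\mathrm{Max}(A)\hookrightarrow\mathrm{Spec}(A)\xrightarrow{\varphi^{\ast}}\mathrm{Spec}(B)\xrightarrow{r}\mathrm{Max}(B)\cong K$, which is continuous, being a composite of continuous maps and homeomorphisms. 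In the filter language of this section this reads: for an $f$-ultrafilter $p$ the family $g[\varphi^{-1}(f^{-1}[p])]$ sits inside $g[N]$ for a maximal ideal $N\subseteq B$, unique because $\varphi^{-1}(f^{-1}[p])$ is prime and $B$ is pm, and $\overline{\sigma}(p)$ is the point of $K$ attached to $g[N]$.

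Finally I would check that the second diagram commutes, that is $\overline{\sigma}\circ i=\sigma$. The point $i(\lambda)=E_{\lambda}$ corresponds under $\Omega_{f}(\Lambda)\cong\mathrm{Max}(A)$ to $M_{\lambda}=f^{-1}[E_{\lambda}]=\{a\in A\mid\lambda\in f(a)\}$, so $\varphi^{-1}(M_{\lambda})=\{b\in B\mid\lambda\in f(\varphi(b))\}$; now the commutativity of the first diagram gives $f(\varphi(b))=\sigma^{\ast}(g(b))=\sigma^{-1}(g(b))$, whence $\varphi^{-1}(M_{\lambda})=\{b\in B\mid\sigma(\lambda)\in g(b)\}=N_{\sigma(\lambda)}$, which is already maximal in $B$. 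Therefore $r$ fixes it and $\mathrm{Max}(B)\cong K$ sends it to $\sigma(\lambda)$, i.e. $\overline{\sigma}(i(\lambda))=\sigma(\lambda)$. The one genuinely delicate point — and the step I expect to be the main obstacle — is the well-definedness and continuity of the middle map $\mathrm{Max}(A)\to\mathrm{Max}(B)$: contraction along $\varphi$ need not carry maximal ideals to maximal ideals, and it is exactly the pm hypothesis on $B$ built into the definition of a transitional ring, combined with the continuity of $\mathrm{Spec}(B)\to\mathrm{Max}(B)$ for pm-rings, that rescues the construction.
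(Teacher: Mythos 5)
Your proposal is correct, but it takes a genuinely different route from the paper's. The paper stays entirely inside its own filter machinery: for each $p \in \Omega_{f}(\Lambda)$ it forms the family $q_{p} = \{g(b) \mid \sigma^{-1}(g(b)) \in p\}$, checks directly that this is a prime $g$-filter, uses the separation axioms of a transition map (via the annihilator trick from Theorem \ref{prabhu}) to show $\bigcap_{g(b) \in q_{p}} g(b)$ contains at most one point $k_{p}$, sets $\overline{\sigma}(p) = k_{p}$, and proves continuity by an explicit basic-neighborhood computation. You instead transport the whole problem to maximal spectra: $\Omega_{f}(\Lambda) \cong \mathrm{Max}(A)$ by Theorem \ref{prabhu}(2), $K \cong \mathrm{Max}(B)$ because the compact subspace $K$ is both dense and closed in the Hausdorff space $\Omega_{g}(K)$, and $\overline{\sigma}$ is the composite of the continuous contraction $\varphi^{\ast}$ with the continuous pm-retraction $\mathrm{Spec}(B) \rightarrow \mathrm{Max}(B)$ from \cite{mo71}. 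The two constructions agree, since $g(b) \in q_{p}$ exactly when $b \in \varphi^{-1}(f^{-1}[p])$, so your $q_{p}$ is $g[\varphi^{-1}(f^{-1}[p])]$ and the point you select is the one the paper selects. Your route buys three things: it makes transparent where the compactness of $K$ enters (it forces $K = \Omega_{g}(K)$, i.e., every $g$-ultrafilter is fixed); it yields the \emph{existence} of the point $k_{p}$ for free from the existence of a maximal ideal over a prime, a step the paper's proof leaves implicit since it only argues that $\bigcap_{g(b)\in q_{p}} g(b)$ cannot contain two points; and it outsources the continuity check to a standard theorem. The paper's route buys self-containment: it never needs the continuity of the pm-retraction, only the tools already developed in Sections \ref{sec:ideals} and \ref{sec:omega}. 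Both verifications of $\overline{\sigma} \circ i = \sigma$ come down to the same computation $f(\varphi(b)) = \sigma^{-1}(g(b))$.
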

\begin{proof}
For each $p \in \Omega_{f}(\Lambda)$, define $q_{p} = \{g(b) \in g(B) \mid f(\varphi(b)) = \sigma^{-1}(g(b)) \in p\}$. At first, we show that $q_{p}$ is a prime $g$-filter:\\
$(i)$ Since $\sigma^{-1}(\emptyset) = \emptyset$ and since $\emptyset \notin p$, therefore we conclude that $\emptyset \notin q_{p}$.\\
$(ii)$ Let $g(b_{1}), g(b_{2}) \in q_{p}$ and let $g(c) = g(b_{1}) \cap g(b_{2})$ for some $c \in (b_{1}, b_{2})$. Then $\sigma^{-1}(g(c)) = \sigma^{-1}(g(b_{1})) \cap \sigma^{-1}(g(b_{2})) \in p$, and so $g(c) = g(b_{1}) \cap g(b_{2}) \in q_{p}$.\\
$(iii)$ Assume now that $g(b) \subseteq g(d)$ for some $d \in B$. Then $\sigma^{-1}(g(b)) \subseteq \sigma^{-1}(g(d))$. As $\sigma^{-1}(g(b)) \in p$, we conclude that $\sigma^{-1}(g(d)) \in p$, that is, $g(d) \in q_{p}$. 

Assume that $g(b_{1}) \cup g(b_{2}) \in q_{p}$. As $\sigma^{-1}(g(b_{1}b_{1})) = \sigma^{-1}(g(b_{1})) \cup \sigma^{-1}(g(b_{2}))$, and as $p$ is a prime $f$-filter on $\Lambda$, therefore we conclude that either $\sigma^{-1}(g(b_{1})) \in p$ or $\sigma^{-1}(g(b_{2})) \in p$, that is, either $g(b_{1}) \in q_{p}$ or $g(b_{2}) \in q_{p}$.

Now we show that $M = \bigcap_{g(b) \in q} g(b)$ consists of a single point, denoted by $k_{p}$:\\
On the contrary, assume that $k_{1}, k_{2} \in M$ for some $k_{1}, k_{2} \in K$ with $k_{1} \neq k_{2}$. Then there exists $b_{1} \in B$ such that $g(b_{1})$ contains exactly one of them. Without loss of generality, assume that $k_{1} \in g(b_{1})$. Since $k_{2} \notin g(b_{1})$, there exists $b_{2} \in B$ such that $k_{2} \in g(b_{2})$ and $g(b_{1}) \cap g(b_{2}) = \emptyset$. This implies that there exists $b_{3} \in Ann(b_{4})$ such that $(b_{1}, b_{3}) = (b_{2}, b_{4}) = B$. Hence, we get that $g(b_{3}) \cup g(b_{4}) = K$, and $g(b_{1}) \cap g(b_{3}) = g(b_{2}) \cap g(b_{4}) = \emptyset$. As $K \in q$, and as $q$ is a prime $g$-filter, therefore we obtain that either $g(b_{3}) \in q$ or $g(b_{4}) \in q$. If $g(b_{3}) \in q$, then $k_{1} \notin g(b_{3})$ since $g(b_{1}) \cap g(b_{3}) = \emptyset$ and $k_{1} \in g(b_{1})$. But this is a contradiction since $k_{1} \in M$. Similarly, if $g(b_{4}) \in q$, then $k_{2} \notin g(b_{4})$ since $g(b_{2}) \cap g(b_{4}) = \emptyset$ and $k_{2} \in g(b_{2})$, a contradiction, since $k_{2} \in M$.

Now, we define a map $\overline{\sigma}: \Omega_{f}(\Lambda) \rightarrow K$ by $\overline{\sigma}(p) = k_{p}$. At first we show that $\overline{\sigma}|_{\Lambda} = \sigma$. Let $p = E_{\lambda}$. Now, $q_{p} = \{g(b) \in g(B) \mid f(\varphi(b)) = \sigma^{-1}(g(b)) \in E_{\lambda}\}$. This implies that $q = \{g(b) \in g(B) \mid \sigma(\lambda) \in g(b)\}$. As we have already seen that $M = \bigcap_{g(b) \in q} g(b)$ is a singleton set, and since $\sigma(\lambda) \in g(b)$ for all $g(b) \in q$, therefore we obtain that $k_{p} = \sigma(\lambda)$, that is, $\overline{\sigma}(\lambda) = \sigma(\lambda)$.

Finally, we show that $\overline{\sigma}$ is a continuous map. Let $p \in \Omega_{f}(\Lambda)$ and let $K - g(b_{1})$ be a basic open set containing $\overline{\sigma}(p) = k_{p}$. As $k_{p} \notin g(b_{1})$, there exists $b_{2} \in B$ such that $k_{p} \in g(b_{2})$ and $g(b_{1}) \cap g(b_{2}) = \emptyset$. Hence, for some $b_{3} \in Ann(b_{4})$, we have that $g(b_{1}) \cap g(b_{3}) = g(b_{2}) \cap g(b_{4}) = \emptyset$ and $g(b_{3}) \cup g(b_{4}) = K$. As $k_{p} \notin g(b_{4})$, we get that $g(b_{4}) \notin q_{p}$, that is, $\sigma^{-1}(g(b_{4})) \notin p$, and so $p \in N = \Omega_{f}(\Lambda) - \overline{\sigma^{-1}(g(b_{4}))}$. We now show that $\overline{\sigma}(N) \subseteq K - g(b_{1})$. Let $t \in N$. Then $\sigma^{-1}(g(b_{4})) \notin t$, and this implies that $g(b_{4}) \notin q_{t}$, and so $k_{t} \notin g(b_{4})$. This implies that $k_{t} \in g(b_{3})$, and so $k_{t} \in K - g(b_{1})$. This shows that $\overline{\sigma}(t) = k_{t} \in K - g(b_{1})$.
\end{proof}

\section{Application}\label{sec:application}

\textbf{(I)} Consider the associated transition map $f: A_{I} \rightarrow \powerset(N)$ introduced in the proof of Theorem \eqref{SITARAM23}. Example \eqref{nkb89} shows that $N$ is a compact Hausdorff space, if $I = I_{f}$, namely $N \cong \mathbb{N}^{\ast}$.

If $I$ properly contains $I_{f}$, then $N$ need not be compact. For instance, consider the ideal $I$ generated by the set $S$ of all even natural numbers and $I_{f}$. It is easy to see that $\mathbb{N} \notin I$. Now, define a sequence $g = (x_{n})_{n \geq 1}$ as follows:
$$
x_{n} = 
\begin{cases} 0 & \text{if } n \in S \\
1 & \text{if } n \notin S
\end{cases}
$$
We claim that $I-\lim x_{n} = 1$. To prove this, observe that if $0 < \epsilon \leq 1$, then $\{n \mid |x_{n} - 1| \geq \epsilon\} = S \in I$. If $1 < \epsilon$, then $\{n \mid |x_{n} - 1| \geq \epsilon\} = \emptyset \in I$. Since $C \subseteq A_{I}$, Example \eqref{nkb89} implies that $\{k\}$ is open, for all $k \in \mathbb{N}$. Observe that $W_{p} = N - f(g)$ is an open set containing $p$. For $k \in \mathbb{N}$, if $W_{k} = \{2k\}$, then $\mathscr{F} = \{W_{m}\}_{m \in N}$ is an open cover for $N$. It is now evident that no finite sub-cover of $\mathscr{F}$ can cover $N$. This shows that $N$ is not compact.  

Because $N$ is not necessarily compact, we now turn to a general examination of the compactification $\Omega_{f}(N)$ of $N$ and investigate its extension property. 

So, let $K$ be a compact Hausdorff space and let $\sigma: N \rightarrow K$ be any continuous function. If $\sigma(N)$ is finite, then as $K$ is Hausdorff, there exists an open set $U$ of $\sigma(p)$ in $K$ such that $\sigma(N) \cap U = \{\sigma(p)\}$. This implies that $N \cap \sigma^{-1}(U) = \{p\}$, a contradiction, since $p$ is the limit point of $N$. This shows that $\sigma(N)$ is infinite.

We now show that if $h \in C(K)$, then $I-\lim h(\sigma(n)) = h(\sigma(p))$. Let $\epsilon > 0$ be any. Consider the neighborhood $W = [h(\sigma(p)) - \epsilon, h(\sigma(p)) + \epsilon]$ of $h(\sigma(p))$. Since, $U = \sigma^{-1}(h^{-1}(W))$ is a neighborhood of $p$, there exists $g \in A_{I}$ such that $p \in N - f(g) \subseteq U$. This implies, from the definition of $f$, that $I-\lim g(n) = l$ and $l \neq 0$. Choose $\delta < \mathrm{min}\{\epsilon, l\}$. As $f(g) \subseteq \{n \in \mathbb{N} \mid |g(n) - l| \geq \delta\} \in I$, we see that $f(g) \in I$, and so $N - U \in I$. Now, for all $n \in N - U$, $h(\sigma(n)) \notin W$. This shows that $N - U = \{n \in \mathbb{N} \mid |h(\sigma(n)) - h(\sigma(p))| \geq \epsilon\} \in I$. This proves that $\{h(\sigma(n))\}_{n \geq 1} \in A_{I}$, for all $h \in C(K)$.

Now, we define the associated transition maps $Z: C(K) \rightarrow \powerset(K)$ given by:
\begin{align*}
Z(\alpha) = \{x \in K : \alpha(x) = 0\}.
\end{align*}
As $K$ is a completely regular, the topology on $K$ generated by the closed base $\{Z_{1}(\alpha) \mid \alpha \in C(K)\}$ is equal to the original topology on $K$. Hence, $K$ is compact with respect to the topology which is generated by the closed base $\{Z_{1}(\alpha) \mid \alpha \in C(K)\}$. As we already obtained that $\{h(\sigma(n))\}_{n \geq 1} \in A_{I}$, for all $h \in C(K)$, therefore we define a map $\varphi: C(K) \rightarrow A_{I}$ by $\varphi(h) = \{h(\sigma(n))\}_{n \geq 1}$, for all $h \in C(K)$. It is easy to see that $\varphi$ is a ring homomorphism. Suppose now that $h \in C(K)$ be any. If $h(\sigma(p)) = 0$, then $\sigma(p) \in Z_{1}(h)$, and so $p \in \sigma^{-1}(Z_{1}(h))$. As $I-\lim h(\sigma(n)) = 0$, we see that $f(\varphi(h)) = Z(h \circ \sigma) \cup \{p\}$. Hence, $\sigma^{-1}(Z_{1}(h)) = f(\varphi(h))$, that is, the first diagram in \eqref{MAHARAJJIJH} commutes. If $h(\sigma(p)) \neq 0$, then also the first diagram in \eqref{MAHARAJJIJH} commutes. Therefore, applying Theorem \eqref{ramramram}, we conclude that there is a unique continuous function $\overline{\sigma}: \Omega_{f}(N) \rightarrow K$, such that $\overline{\sigma}|_{N} = \sigma$, that is, the second diagram in \eqref{MAHARAJJIJH} also commutes.  We can now apply theorems $(6.4)$ and $(6.7)$ in \cite{gj60} to conclude that $\Omega_{f}(N) \cong \beta N$. 
\begin{equation}\label{MAHARAJJIJH}
\begin{tikzcd}
{C(K)} && {Z_{1}(C(K))} &&& {\Omega_{f}(N)} \\
\\
{A_{I}} && {f(A_{I})} && N && K
\arrow["Z_{1}", from=1-1, to=1-3]
\arrow["\varphi"', from=1-1, to=3-1]
\arrow["{\sigma^{\ast}}", from=1-3, to=3-3]
\arrow["{\overline{\sigma}}", dashed, from=1-6, to=3-7]
\arrow["f"', from=3-1, to=3-3]
\arrow["i", from=3-5, to=1-6]
\arrow["\sigma"', from=3-5, to=3-7]
\end{tikzcd}
\end{equation}

\vspace{0.2cm}

\textbf{(II)} Let $X$ be a Tychonoff space and $K$ be a compact Hausdorff space. Define the associated transition maps $Z_{1}: C(K) \rightarrow \powerset(K)$ and $Z_{2}: C(X) \rightarrow \powerset(X)$ by:
\begin{align*}
Z_{1}(\alpha) = \{x \in K : \alpha(x) = 0\} \text{ and } Z_{2}(\beta) = \{x \in X : \beta(x) = 0\}.
\end{align*}
Since $K$ is completely regular, therefore the topology generated by the closed base $\{Z_{1}(\alpha) \mid \alpha \in C(K)\}$ is equal to the original topology on $K$. Thus, $K$ is compact with respect to the topology which is generated by the closed base $\{Z_{1}(\alpha) \mid \alpha \in C(K)\}$. Let $\sigma: X \rightarrow K$ be any continuous function. Then there is a ring homomorphism $\varphi: C(K) \rightarrow C(X)$, defined by $\varphi(\alpha) = \alpha \circ \sigma$, for all $\alpha$ in $C(K)$. Now, observe that we have $\sigma^{-1}(Z_{1}(\alpha)) = Z_{2}(\alpha \circ \sigma) = Z_{2}(\varphi(\alpha))$. Thus, we obtain that the first diagram in \eqref{MAHARAJJI} commutes. Hence, applying Theorem \eqref{ramramram}, we conclude that $\sigma$ has a continuous extension $\overline{\sigma}: \Omega_{f}(X) \rightarrow K$ such that the second diagram in \eqref{MAHARAJJI} also commutes. We can now apply theorems $(6.4)$ and $(6.7)$ in \cite{gj60} to conclude that $\Omega_{f}(X) \cong \beta X$. 
\begin{equation}\label{MAHARAJJI}
\begin{tikzcd}
{C(K)} && {Z_{1}(C(K))} &&& {\Omega_{f}(X)} \\
\\
{C(X)} && {Z_{2}(C(X))} && X && K
\arrow["Z_{1}", from=1-1, to=1-3]
\arrow["\varphi"', from=1-1, to=3-1]
\arrow["{\sigma^{\ast}}", from=1-3, to=3-3]
\arrow["{\overline{\sigma}}", dashed, from=1-6, to=3-7]
\arrow["Z_{2}"', from=3-1, to=3-3]
\arrow["i", from=3-5, to=1-6]
\arrow["\sigma"', from=3-5, to=3-7]
\end{tikzcd}
\end{equation}

\vspace{0.2cm}

\textbf{(III)} Let $X$ be a zero-dimensional (that is, the topology of $X$ contain a base of clopen sets) Hausdorff space and let $K$ be a zero-dimensional compact Hausdorff space. Let us now consider the associated transition maps $Z_{1}: C_{c}(K) \rightarrow \powerset(K)$ and $Z_{2}: C_{c}(X) \rightarrow \powerset(X)$ defined by:
\begin{align*}
Z_{1}(\alpha) = \{x \in K : \alpha(x) = 0\} \text{ and } Z_{2}(\beta) = \{x \in X : \beta(x) = 0\}.
\end{align*}
If $U \subseteq K$ is clopen, then its characteristic function $\chi_{U}$ defined by:
$$\chi_{U}(x) = 
\begin{cases} 0 & \text{if } x \in U \\ 
1 & \text{if } x \notin U 
\end{cases}$$
is an element of $C_{c}(K)$. Hence, the topology generated by the closed base $\{Z_{1}(\alpha) \mid \alpha \in C_{c}(K)\}$ is equal to the original topology on $K$. Thus, $K$ is compact with respect to the topology which is generated by the closed base $\{Z_{1}(\alpha) \mid \alpha \in C_{c}(K)\}$. Let $\sigma: X \rightarrow K$ be any continuous function. Then there is a ring homomorphism $\varphi: C_{c}(K) \rightarrow C_{c}(X)$, defined by $\varphi(\alpha) = \alpha \circ \sigma$, for all $\alpha \in C_{c}(K)$. Now, observe that we have $\sigma^{-1}(Z_{1}(\alpha)) = Z_{2}(\alpha \circ \sigma) = Z_{2}(\varphi(\alpha))$. This shows that the first diagram in \eqref{MAHARAJJIJOY} commutes. Hence, applying Theorem \eqref{ramramram}, we conclude that $\sigma$ has a continuous extension $\overline{\sigma}: \Omega_{f}(X) \rightarrow K$ such that the second diagram in \eqref{MAHARAJJIJOY} also commutes. Now, we can conclude from \cite{p21} that $\Omega_{f}(X) \cong \beta_{0}X$, the Banaschewski compactification of $X$.
\begin{equation}\label{MAHARAJJIJOY}
\begin{tikzcd}
{C_{c}(K)} && {Z_{1}(C_{c}(K))} &&& {\Omega_{f}(X)} \\
\\
{C_{c}(X)} && {Z_{2}(C_{c}(X))} && X && K
\arrow["Z_{1}", from=1-1, to=1-3]
\arrow["\varphi"', from=1-1, to=3-1]
\arrow["{\sigma^{\ast}}", from=1-3, to=3-3]
\arrow["{\overline{\sigma}}", dashed, from=1-6, to=3-7]
\arrow["Z_{2}"', from=3-1, to=3-3]
\arrow["i", from=3-5, to=1-6]
\arrow["\sigma"', from=3-5, to=3-7]
\end{tikzcd}
\end{equation}

\bibliographystyle{amsplain}

\end{document}